\definecolor{darkmidnightblue}{rgb}{0.0, 0.2, 0.4}
\newcommand{\B}{\mathcal{B}}
\newcommand{\ep}{\epsilon}
\newcommand{\e}{\mathrm{e}}
\renewcommand{\i}{\mathrm{i}}
\newcommand{\im}{\mathrm{i}}
\newcommand*{\de}{\operatorname{d\!}{}} 
\renewcommand{\dd}[2]{\frac{\de#1}{\de#2}}
\title{Resurgent aspects of applied exponential asymptotics}
\author{Samuel Crew \& Philippe H. Trinh}
\affiliation{Department of Mathematical Sciences, University of Bath\\
Claverton Down, Bath BA2 7AY, UK}
\emailAdd{sc754@bath.ac.uk}
\emailAdd{p.trinh@bath.ac.uk}
\abstract{
In many physical problems, it is important to capture exponentially-small effects that lie beyond-all-orders of a typical asymptotic expansion; when collected, the full expansion is known as the trans-series. Applied exponential asymptotics has been enormously successful in developing  practical tools for studying the leading exponentials of a trans-series expansion, typically in the context of singular non-linear perturbative differential or integral equations. 
Separate to applied exponential asymptotics, there exists a closely related line of development known as \'Ecalle's theory of resurgence, which describes the connection between trans-series and a certain class of holomorphic functions known as resurgent functions. This connection is realised through the process of Borel resummation. 
However, in contrast to singularly perturbed problems, Borel resummation and \'Ecalle's resurgence theory have mainly focused on non-parametric asymptotic expansions (i.e. differential equations without a parameter). The relationships between these latter areas and applied exponential asymptotics has not been thoroughly examined, partially due to differences in language and emphasis.
In this work, we explore these connections by developing an alternative framework for the factorial-over-power ansatz in exponential asymptotics that is centred on the Borel plane. Our work clarifies a number of elements used in apploed exponential asymptotics, such as the heuristic use of Van Dyke's rule and the universality of factorial-over-power ansatzes. Along the way, we provide a number of useful tools for probing more pathological problems in exponential asymptotics known to arise in applications; this includes problems with coalescing singularities, nested boundary layers, and more general late-term behaviours. 
}
\theoremstyle{definition}
\newtheorem{corollary}{Corollary}[section]
\newtheorem{lemma}{Lemma}[section]
\newtheorem{example}{Example}[section]
\newtheorem*{remark}{Remark}
\begin{document}
\maketitle
\newpage

\section{Introduction}
Exact solutions to interacting physical systems are rare. This sparsity persists in nature across a wide range of energy scales, from strongly interacting gauge and string theories, through to quantum mechanics and classical effective field theories such as the Navier-Stokes equations. Unless a model enjoys some additional symmetry, then typically closed form expressions for generic physical observables (be that correlation functions in quantum field theory or free surface profiles in fluid flow) cannot be found. Instead, we often turn to studying observables at isolated points in some parameter space where non-dimensional parameters are small (or large). Typically, we suppose a model will be exactly soluble at some trivial point $\epsilon=0$, and we look to extrapolate observables, say $y$, across the whole space by seeking an expansion in a small parameter,
\begin{equation} \label{eq:observable}
    y \sim y_0 + y_1 \epsilon + y_2 \epsilon^2 \ldots
\end{equation}
Indeed, one may hope that such expansions may be patched together to cover the entire parameter space. 

The method of seeking an expansion of physical quantities in small parameters has proved very powerful in the physical sciences. Unfortunately, one eventually runs into a problem with such a perturbative approach. In a wide variety of applications, the series \eqref{eq:observable} will have a zero radius of convergence. While such series can be tremendously accurate with few terms \cite{bender2013advanced}, eventually the series will diverge, and one is faced with the apparent impossibility of recovering an analytic extrapolation in the sense hoped-for above. The physical mechanisms for such divergence are diverse. For example, in quantum field theory, a heuristic is the factorial growth of Feynman diagrams \cite{le2012large} or Dyson's \cite{dyson1952divergence} instability argument for the zero radius of convergence for series \eqref{eq:observable} arising in quantum electrodynamics. In applied mathematics and classical physics, expansions such as \eqref{eq:observable} may arise as singular perturbations, where later terms in $y$ depend on derivatives of earlier terms, and factorial divergence is born in a more prescriptive way (see \emph{e.g.}~\cite{chapman_1998_exponential_asymptotics}).

However, all is not lost. It was soon realised that perturbative series contain much more information than first appears. In particular, an argument, principally due to Berry~\cite{berry_1989} and Dingle \cite{dingle1973asymptotic}, shows that when one truncates \eqref{eq:observable} at the optimal point before it begins to diverge, then there are hidden non-perturbative contributions to the series of the form $e^{-1/\epsilon}$. As $\epsilon$ varies in the complex plane, terms such as these are smoothly switched-on across so-called \emph{Stokes lines}; although they begin exponentially small, eventually they may come to dominate the asymptotics in other sectors of $\mathbb{C}$. It is a breathtaking result that hiding in the late (divergent) perturbative coefficients is the non-perturbative physics. Resummation methods allow one to understand such non-analytic additions by assigning an analytic value to $y$. Borel resummation, in particular, realises $y$ as an asymptotic expansion of a contour integral of an auxiliary function, the \textit{Borel transform}, in \textit{Borel space}. The aforementioned ambiguities related to divergence are then associated with the treatment of singularities of the Borel transform. Resummation methods have enjoyed enormous success in the analysis of differential equations with singular points (see \emph{e.g.}~\cite{costin2008asymptotics}). Furthermore, by now, there is a large literature (see \cite{Aniceto:2018bis} for a review) on Borel resummation in theoretical physics; in many such cases, the Borel poles are physical, and correspond to non-perturbative semi-classical contributions to the path integral (e.g. instantons). The potential of using such ideas in the development of non-perturbative quantum field theory cannot be understated.

In the seminal work of \'Ecalle \cite{ecalle1981fonctions} it was shown that trans-series and resummation techniques can be phrased in the language of complex analysis. \'Ecalle elucidates a beautiful correspondence between trans-series and holomorphic (resurgent) functions, i.e.
\begin{equation}\label{eq:correspondence}
    \text{Trans-series,} \sum_i e^{-\chi_i/\epsilon}y^{(i)}(\epsilon) \longleftrightarrow \text{Holomorphic (resurgent) functions, }y_B(z).
\end{equation}
In particular, under this correspondence, the notion that perturbative series know about their own non-perturbative completion is merely analytic continuation by another name. The theory of hyperterminants \cite{berry1991hyperasymptotics,berry1990hyperasymptotics,olde1996hyperterminants,olde1998hyperterminants} mirrors the more formal resurgence theory for beyond-all-orders saddle-point evaluation of integrals. 

\subsection{Goals of this work}
We have three main goals for this work. First, we shall apply some aspects of resurgence to the study of singularly perturbed linear ordinary differential equations. An example is 
\begin{equation} \label{eq:intro_ode}
    \epsilon^2 y''(z) + \epsilon y'(z) + y(z)= 1/z \,,
\end{equation}
with $z\in\mathbb{C}$, the parameter $\ep$ considered small, and certain (natural) boundary conditions on $y$. A perturbative expansion of a solution to such an equation takes the general form
\begin{equation} \label{eq:intro_trans}
    y(z;\epsilon) = y^{(0)}(z;\epsilon) + \sum_i e^{-\chi_i(z)/\epsilon}y^{(i)}(z;\epsilon),
\end{equation}
where the $y^{(i)}(z;\epsilon)$ are divergent expansions similar to \eqref{eq:observable}. The above is thus a trans-series in a small parameter, $\epsilon$, and involves an additional holomorphic variable, $z \in \mathbb{C}$. Although \eqref{eq:intro_ode} is very standard, we emphasise that resurgent techniques have more typically been applied to the study of non-parametric differential equations. There are a number of related works by \emph{e.g.} Howls~\cite{howls2010exponential} (on trans-series for boundary-value problems) and Byatt-Smith~ \cite{byatt2000borel} (on the Borel transform), but we believe the approach of working entirely in the Borel plane for such singularly perturbed problems is not as well appreciated. 

Singularly perturbed differential equations such as \eqref{eq:intro_ode} are standard in applied mathematics, where problems may also take more obscure forms, including nonlinear equations, integro-differential equations, partial differential equations, and boundary-value problems. In many such problems, the emphasis is more to derive leading-order exponentially-small estimates---perhaps only the first approximation to, say $\e^{-\chi_1/\ep} y^{(1)}$ in \eqref{eq:intro_trans}. Surprisingly, these exponentially-small effects can dictate a number of key properties of the associated physical problem; applications in classical physics have included modelling of dendritic crystal growth~\cite{kruskal1991asymptotics}, Saffman-Taylor viscous fingering, water waves, transition to turbulence, vortex reconnection, pattern formation, and many others. The development of exponential asymptotics for such problems has been enormously successful. In some sense, the focus has mostly been on the trans-series series side of the correspondence \eqref{eq:correspondence}---that is, manipulation and analysis of the divergent series. 

In the methodology of Chapman \emph{et al.}~\cite{chapman_1998_exponential_asymptotics} for instance, one begins by studying the early terms of the asymptotic expansion, say $y_0(z)$ or $y_1(z)$, and noting that these early orders often contain singularities in $\mathbb{C}_z$ (poles or branch points). By the singular nature of the differential equation, subsequent terms, say $y_n$, depend on differentiation of the previous terms. By the linearity of this procedure, no new singularities are introduced beyond those that appear in the early terms. Thus, further  differentiation of the early terms can lead to factorial-over-power divergence as $n \to \infty$ and one posits that the late terms of $y^{(0)}(z)$ in \eqref{eq:intro_trans} satisfy
\begin{equation}
    y^{(0)}_n(z) \sim \frac{A(z)\Gamma(n+\gamma)}{\chi(z)^{n+\gamma}} \qquad \text{as $n \to \infty$}. 
\end{equation}
The components such as $A$, $\gamma$, and $\chi$ are derived using matched asymptotics in the complex plane, and optimal truncation and Stokes smoothing is applied in order to relate the above divergence to the leading-order trans-series correction, involving $\e^{-\chi/\ep}$. Such complex-plane asymptotics have been a standard approach since the seminal work of Kruskal \& Segur~\cite{kruskal1991asymptotics} and other similar applications can be found in the compendium by Segur \emph{et al.}~\cite{segur2012asymptotics}.


Though the above approach is powerful, recent research has revealed an increasing number of problems that stretch or challenge the conventional methodology of applied exponential asymptotics. These include, for instance, problems involving coalescing singularities~\cite{trinh2015exponential}, interacting Stokes lines~\cite{king1998interacting},  partial differential equations~\cite{chapman2005exponential,body2005exponential}, and higher-order Stokes phenomena. Our second goal in this work is to develop analogues of the factorial-over-power ansatz using aspects of resurgence; we shall argue that this provides some powerful intuition to the rich geometric structure of such problems. The approach has the potential to allow the creation of new model  problems in singular perturbation theory that exhibit exponential asymptotic effects---and for which physical intuition may be lacking. This is crucial, for instance, in applications to partial differential equations where techniques remain quite limited.

Our final goal is to provide important links between the disparate communities studying beyond-all-orders asymptotics. Unsurprisingly, due to both the ubiquity of perturbation theory in the physical sciences and the rich mathematical structure of resurgence, there is a wide range of researchers working in the closely related, but often disparate, fields of exponential asymptotics, singular perturbation theory, Borel resummation, and resurgence across the spectrum of applied mathematics, geometry/quantum field theory, and analysis---we have attempted to summarise and classify a selection of such works in Table \ref{table:summary}. The work grew out of the recent Isaac Newton Institute program on applicable resurgent asymptotics and we hope parts of the paper serve as a useful review to bridge some of the language barrier between these different communities in exponential asymptotics. More precisely, our goal is to make some steps towards unifying these approaches, whilst reviewing and making accessible some aspects of resurgence theory to the applied exponential asymptotics community. At the same time, we phrase the factorial-over-power ansatz method common in applied mathematicians in a language familiar to other practitioners of resurgence (such as those working in theoretical physics) and review some tools of singular perturbation theory. 

\subsection{Outline and summary of results}
In \cref{sec:background}, we review the background and terminology of resurgence, primarily for non-parametric asymptotic expansions. Then in \cref{sec:parametricborel}, we study properties of holomorphic trans-series and their corresponding parametric Borel transforms in generality. A new feature, as compared with constant trans-series, is the presence of boundary layers---we shall explain boundary layers in terms of the interplay between singularities in the Borel plane $\Gamma_w$ and singularities in the physical plane $\Gamma_z$. In applied exponential asymptotics, methodologies apply a heuristic called Van-Dyke's rule~\cite{vandyke_book} for the matching of inner and outer limits of the divergent series. We later introduce a purely holomorphic version of Van-Dyke's rule that relates the two expansions about points in $\Gamma_w$ and $\Gamma_z$.

In \cref{sec:singularperturbation}, we turn our attention to holomorphic trans-series that arise as solutions to singularly perturbed linear ordinary differential equations. In applied exponential asymptotics the celebrated `factorial-over-power' ansatz method (briefly outlined above) determines the (leading order) components of the trans-series side of \eqref{eq:correspondence}. We shall explain this ansatz in terms of the holomorphic side of the correspondence (\textit{i.e.} in terms of a singularity ansatz in the Borel plane), and thereby extend the method to determine all-orders of the trans-series on the left hand-side of \eqref{eq:correspondence}. In  \cref{sec:applications} we conclude with some examples, pathologies, and future directions.

We emphasise the following perspective: From the view of the Borel plane (the trans-series side of correspondence \eqref{eq:correspondence}), singularly perturbed problems are comprised of two parts: an `operator part' (outer) and an `initial data part' (inner). In terms of the operator part, we follow many of the same ideas as the communities studying the exact WKB method (cf. the monographs by Aoki \emph{et al.}~\cite{aoki2008virtual} and Honda \emph{et al.}~\cite{honda2015virtual}). In particular, we review how in the Borel plane, singularly perturbed problems yield partial differential equations, $\mathscr{P}_By_B = 0$, for the parametric resurgent function on the right-hand side of the correspondence \eqref{eq:correspondence}. We show that such 
\textit{partial} differential equations yield \textit{ordinary} differential equations for the holomorphic components of the trans-series (or equivalently $z$-dependent coefficients in the expansions near singularities in the Borel plane). 

However, the `operator part' does not provide initial conditions for these equations---in this way, singular perturbation theory furnishes us with a `blank template' trans-series. The `initial condition' (inner) part of the methodology fills in the blanks. In our work, we show that, by re-scaling near singularities $\Gamma_z$, we obtain an inner Borel \textit{ODE} operator, $\mathscr{P}_B^{\text{inner}}$, describing a constant ($0$-dimensional) trans-series problem of the type originally studied by \'Ecalle. Upon translating back to `physical space', inner equations may be ordinary differential equations with irregular singular points. This problem is `difficult' but solving this resurgent connection problem and finding the (infinitely many) associated Stokes' constants yields initial conditions at $\Gamma_z$ for the infinitely many ordinary differential equations for trans-series components---thereby solving the original singularly perturbed problem.

Finally, turning back to holomorphic trans-series more generally, we shall also discuss how modifications of the `factorial-over-power' ansatz of traditional exponential asymptotics \cite{chapman_1998_exponential_asymptotics} are necessary if singularities in the Borel plane differ from power law singularities. We demonstrate a result that bypasses the usual optimal truncation and Stokes switching argument and shows directly how asymptotics of power series coefficients may be related to Hankel integrals, which subsequently determine leading-order exponentially small terms. 

\section{Background}\label{sec:background}
We now introduce the background material necessary to understand certain resurgent aspects of singular perturbation theory. We begin with a re-framing of some well-known properties of holomorphic functions with discrete singularities. Next, we review the method of Borel resummation and elucidate the correspondence between trans-series and resurgent functions. The reviews by Aniceto \textit{et al.}~\cite{Aniceto:2018bis} and Dorigoni~\cite{dorigoni2019introduction} have been helpful in the preparation of some parts of this section and we adopt a similar physical approach to mathematical rigour throughout.

\subsection{Coefficients of holomorphic functions and power series} \label{subsec:complexanalysis}
Consider a locally convergent power series
\begin{equation} \label{eq:fser}
    f(x) = f_0 + f_1 x + f_2 x^2 + \ldots
\end{equation}
defined for $x\in\mathbb{C}$. In this paper, we shall equivalently refer to the above as a \emph{holomorphic germ} and write $f\in \mathcal{O}_0(\mathbb{C})$. The uniqueness of analytic continuation suggests that the coefficients, $\{f_n\}$, must necessarily contain essential information about the nature and location of singularities of $f$. The following discussion elucidates precisely how the singularity data is encoded and may be read directly from the asymptotics of $f_n$. Although this relates to some elementary facts of complex analysis, we  present the results in a way which will be convenient for later application to Stokes phenomena.

\begin{lemma}[Coefficients of a power series] \label{lemma:hankelcoefficients}
Let $f \in \mathcal{O}_0(\mathbb{C})$ be the germ about the origin, $x=0$, of a holomorphic function, which we shall write as \eqref{eq:fser}. Suppose that the analytic continuation of $f$ has a discrete set of singularities at $x = \chi_1, \chi_2, \chi_3 \ldots \in \mathbb{C}$ and has sub-exponential growth at infinity. The coefficients, $f_n$, of the germ of $f$ may then be given by the integral,
\begin{equation}\label{eq:hankelcoefficients}
    f_n = \frac{1}{2 \pi \i}\sum_{\chi} \int_{\mathcal{H}_{\chi}} \de{w} \, \e^{-w n} f(\e^w) \,,
\end{equation}
where the sum is taken over the singularities, $\chi = \chi_j$ for $j \geq 1$ of $f$, and $\mathcal{H}_{\chi}$ denotes a Hankel contour that encircles each respective singularity.
\end{lemma}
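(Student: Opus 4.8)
The plan is to begin from Cauchy's coefficient formula and transport it into the logarithmic ($w$-)plane where the Hankel contours of \eqref{eq:hankelcoefficients} naturally live. Since $f$ is holomorphic on the disc whose radius equals the distance to the nearest singularity, for any $0 < r < \min_j|\chi_j|$ we have
\begin{equation*}
f_n = \frac{1}{2\pi\i}\oint_{|x|=r}\frac{f(x)}{x^{n+1}}\,\de{x}.
\end{equation*}
First I would substitute $x = \e^{w}$, so that $x^{-(n+1)}\de{x} = \e^{-nw}\de{w}$. The circle $|x|=r$ is the image of the vertical segment $L_r = \{\,w : \operatorname{Re} w = \log r,\ |\operatorname{Im} w|\le \pi\,\}$, whose two ends at $\operatorname{Im} w = \pm\pi$ both map to $x = -r$ and are therefore identified on the cylinder $\mathbb{C}/2\pi\i\mathbb{Z}$, so that $L_r$ becomes a closed loop. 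The crucial observation is that the integrand $g(w) := \e^{-nw} f(\e^{w})$ is $2\pi\i$-periodic, since $f(\e^{w})$ is periodic and $\e^{-2\pi\i n}=1$ for integer $n$; hence $g$ is a well-defined function on the cylinder, singular only at the lifts of the $\chi_j$, and
\begin{equation*}
f_n = \frac{1}{2\pi\i}\int_{L_r} \e^{-nw} f(\e^{w})\,\de{w}.
\end{equation*}

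Next I would deform $L_r$ to the right, towards $\operatorname{Re} w \to +\infty$ (that is, $|x|\to\infty$). On the cylinder the singularities of $g$ are exactly the images $w = \log\chi_j$, one representative per $\chi_j$ in the fundamental strip, all lying to the right of $L_r$ because $\log r < \log|\chi_j|$. Each time the contour sweeps across such a point, Cauchy's theorem forces it to leave behind a small loop around $\log\chi_j$; taking the branch cut emanating from $\chi_j$ radially outward to infinity (a horizontal ray to the right in the $w$-plane) turns this loop into precisely the Hankel contour $\mathcal{H}_{\chi_j}$. Pushing the residual vertical contour $L_R$ all the way to $\operatorname{Re} w = +\infty$ and showing that its contribution vanishes then yields $f_n = \tfrac{1}{2\pi\i}\sum_{\chi}\int_{\mathcal{H}_\chi}\e^{-nw}f(\e^w)\,\de{w}$, as claimed.

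The main obstacle is the vanishing of the far-right contour $L_R$. On it $|\e^{-nw}| = R^{-n}$, so the bound $\bigl|\int_{L_R} g\,\de{w}\bigr| \le 2\pi R^{-n}\max_{|x|=R}|f(x)|$ reduces the entire argument to controlling $\max_{|x|=R}|f|$ against $R^{n}$; this is exactly where the sub-exponential growth hypothesis enters, and one must check that it is adequate for the relevant range of $n$ (it certainly is once $n$ exceeds the growth order of $f$, with $n=0$ treated separately). Secondary technical points I would address are the correct counting of singularities in the fundamental strip (placing its boundary so that no $\chi_j$ sits on $\operatorname{Im} w = \pm\pi$ and confirming that multiply-covered points are not double-counted), and the consistency and orientation of the chosen branch cuts so that the loops assemble into the Hankel contours with the stated sign. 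At the level of rigour adopted in this section, these last items reduce to routine contour-deformation bookkeeping once the decay at infinity has been secured.
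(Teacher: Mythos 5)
Your proposal is correct and follows essentially the same route as the paper: Cauchy's coefficient formula, the conformal substitution $x=\e^{w}$ onto the cylinder, and contour deformation past the singularities to produce the Hankel contours. The only difference is one of detail, not of method --- you make explicit the rightward deformation, the periodicity of $\e^{-nw}f(\e^{w})$ on the cylinder, and the decay estimate $R^{-n}\max_{|x|=R}|f|$ where the sub-exponential growth hypothesis enters, all of which the paper leaves implicit in its one-line proof and accompanying figure.
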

\begin{proof}
This lemma is a re-writing of Cauchy's integral formula. We write the coefficients as 
\begin{equation}
    f_n = \frac{1}{2 \pi \i} \oint_{S^1} \frac{\de{x}}{x^{n+1}} \, f(x) \,,
    \end{equation}
and then apply the conformal transformation $x = \e^{w}$. Deformation of the contour along the unit circle then yields the Hankel contours. Under this map, integration in the $x$-plane can then be expressed as integration on the $w$ cylinder. This is illustrated in figure \ref{fig:cylinder}.

\begin{figure}
    \centering
    \includegraphics{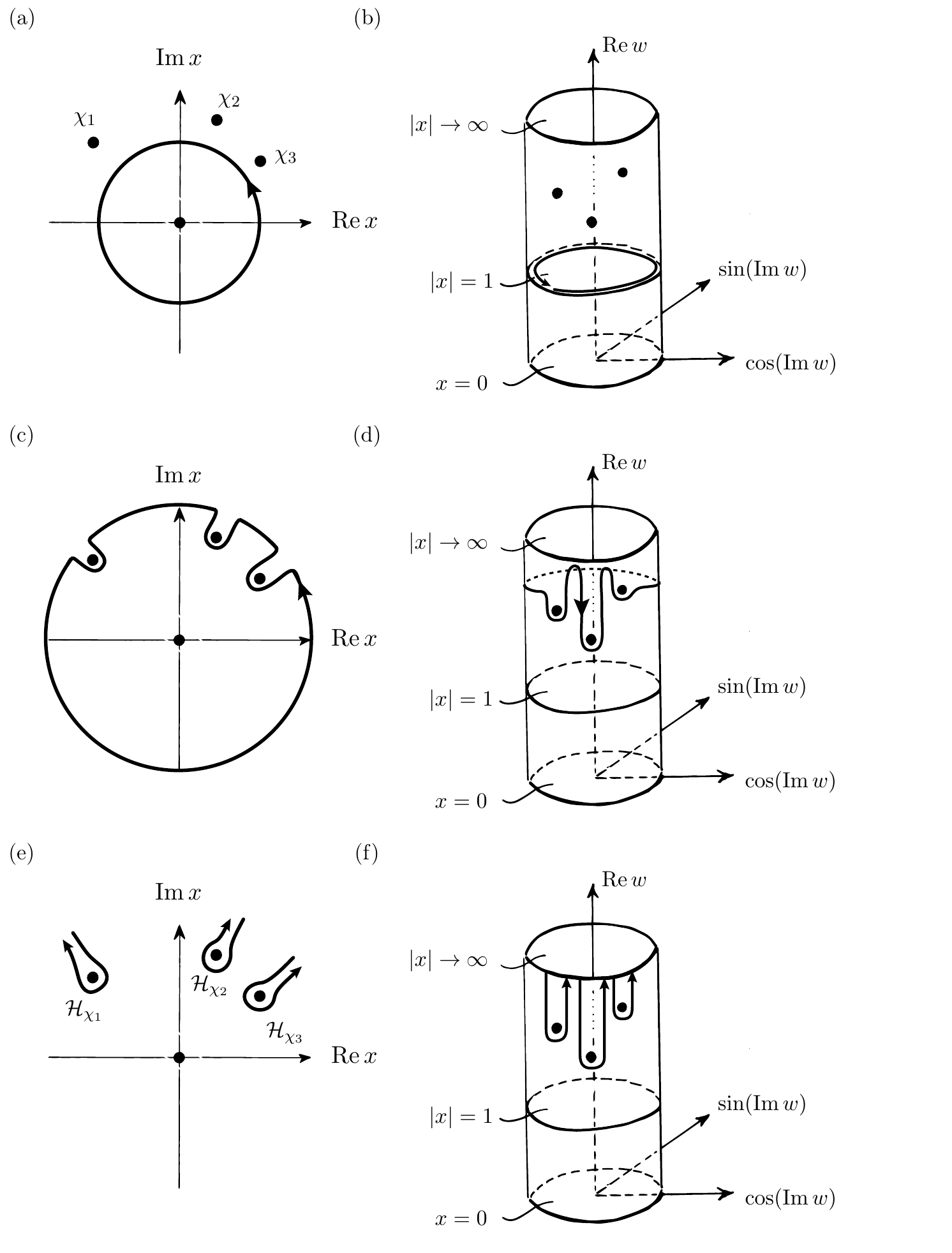}
    \caption{The power series coefficients, $f_n$, of \eqref{eq:fser} can be expressed as a sum of Hankel integrals, $\mathcal{H}_\chi$, around each singularity. This is shown via subplots (a, c, e) for an example with three singularities at $x = \chi_1$, $\chi_2$, and $\chi_3$. From the transformation $x = \e^{w} = \e^{\Re w} \e^{\im \Im w}$, the coefficients can then be expressed in terms of Laplace-type integrals \eqref{eq:hankelcoefficients}. The correspondence of points in $x$ to points in $w$ can be visualised via the cylindrical geometry shown in subplots (b, d, f). Together, this illustrates the sequence of necessary deformation to the Hankel contour in either coordinate system.}
    \label{fig:cylinder}
\end{figure}

\end{proof}
\noindent The above lemma demonstrates the relationship between the coefficients of a holomorphic germ and the singularities in the function's analytic continuation. A version of the classic Darboux theorem, which is notably described by Dingle~\cite{dingle1973asymptotic}, follows from formula \eqref{eq:hankelcoefficients}; this we now explain.

Firstly, let us suppose that the analytic continuation of $f$ has a single singularity at $x=\chi$. Suppose further that at this point the singularity in $f(x)$ takes the form of a power law
\begin{equation}\label{eq:elementarysingularity}
f(x) = (x-\chi)^{-\alpha}\underbrace{\Bigl(a_0 + (x-\chi)a_1 + (x-\chi)^2 a_2 + \ldots\Bigr)}_{h(x)} + \text{reg.}
\end{equation}
with $\alpha \in \mathbb{C} \backslash \mathbb{Z}_{\le 0}$. Since $x=\chi$ is the only singularity, then the regular part (reg.) and $h(x)$ are entire. We may evaluate the integral \eqref{eq:hankelcoefficients} at large $n$ with \eqref{eq:elementarysingularity} to determine the asymptotics of the coefficients. After a change of variables and scaling ($e^w = \chi e^{s/n}$) one may use results from appendix \ref{sec:gammaappendix} to evaluate the integral at large $n$, term-by-term, giving
\begin{equation} \label{eq:ffactpow_1}
    f_n = \frac{n^{\alpha-1}(-1)^\alpha}{(\chi)^{n+\alpha}} \left(\frac{a_0}{\Gamma(\alpha)} + \frac{1}{n} \left( \frac{a_0 \alpha(\alpha-1)}{2\Gamma(\alpha)} - \frac{(\chi)a_1}{\Gamma(\alpha-1)}\right) + \ldots \right) \,.
\end{equation}
For singularities of the type \eqref{eq:elementarysingularity}, instead of \eqref{eq:ffactpow_1}, the asymptotics may be alternatively expressed as $n \to \infty$ by
\begin{multline} \label{eq:ffactpow}
    f_n = \frac{\Gamma(n+\alpha)(-1)^{\alpha}}{(\chi)^{n+\alpha}\Gamma(n+1)} \biggl( \frac{a_0}{\Gamma(\alpha)} -
    \frac{1}{(n+\alpha-1)}\frac{\chi a_1}{\Gamma(\alpha-1)} \\
    + \frac{1}{(n+\alpha-1)(n+\alpha-2)}\frac{\chi^2 a_2}{\Gamma(\alpha-2)} - \ldots \biggr) \,,
\end{multline}
which follows from the binomial theorem. Note that one may begin to compare these two expansions using the result \eqref{eq:gammaasymptotics} from the appendix. 

In the context of applied singular perturbation theory, the leading factorial-over-power term in \eqref{eq:ffactpow} is convenient for application; indeed, it is the form that is preferred by many practitioners (e.g. \cite{chapman_1998_exponential_asymptotics}). However, as we see in the following (corollary~\ref{cor:stokesswitching}), when studying leading-order exponential corrections a late-terms ansatz of the form \eqref{eq:ffactpow_1} with a pre-factor of $n^{\alpha-1}$ is sufficient.

Now suppose that $f$ has additional power law singularities at $x = \chi_1,\chi_2, \ldots$ and near each $f$ can be written as a local expansion,
\begin{equation}
    f(x) = (x-\chi_i)^{-\alpha_i}(a_0^{(i)} + a_1^{(i)}(x-\chi_i) + \ldots) + \text{reg.}, \qquad \text{for $i = 1, 2, 3, \ldots$}
\end{equation}
In this case, the regular parts (reg.) have finite radii of convergence and consequently the large-$n$ term-by-term evaluation of the integral \eqref{eq:hankelcoefficients}, similar to the writing of \eqref{eq:ffactpow}, is only asymptotically valid. Indeed, we obtain a divergent \textit{trans-series} in the small parameter $1/n$ for the coefficients $f_n$, where more distant singularities are suppressed by an exponentially small pre-factor, $\chi^{-n-\alpha} = \exp(-(n-\alpha)\log \chi)$. This leads to the expansion in $1/n$ of the coefficients\footnote{Numerical factors present in \eqref{eq:ffactpow} have been absorbed into the coefficients $a_i^{(j)}$.}
\begin{equation} \label{eq:fn_trans}
    f_n = \sum_i \frac{1}{\chi_i^{n+\alpha_i}} \frac{\Gamma(n+\alpha_i)}{\Gamma(n+1)}\left( a_0^{(i)} + \frac{1}{(n+\alpha_i-1)}a_1^{(i)} + \ldots \right) \,.
\end{equation}

\begin{remark}
Whilst the above trans-series result \eqref{eq:fn_trans}, which applies to the case of power-law singularities, can be obtained using the binomial theorem, lemma \ref{lemma:hankelcoefficients} may be applied more generally---that is, there exists is a relationship between the asymptotics of a local expansion and the type of singularities on the boundary of convergence. In section \ref{subsec:generalsings} we discuss more general singularities.
\end{remark}

This section may be summarised as follows. The coefficients of an expansion about any given singularity can be found in the large-$n$ asymptotics of the coefficients about the origin (or indeed any other singular point). All singularities of $f$ are connected in this way. This may be relatively unsurprising when posed in the language of complex analysis (\textit{i.e.} analytic continuation is uniquely determined by the germ at the origin)---however, as we shall review in the following sections, these same results can be translated to facts about divergent asymptotic series via Borel resummation \cite{ecalle1981fonctions}. In this context, one observes the striking fact that \textit{divergent perturbative series often `know' about their own non-perturbative corrections}.

We conclude this section with a result closely related to lemma \ref{lemma:hankelcoefficients}. This expression will be crucial to the analysis of Stokes phenomena to follow. In brief, we may relate a certain Hankel contour integral of $f$, studied in the limit of $\ep \to 0$, with the asymptotics of the coefficients of $f$ expanded about a particular singularity. 

\begin{lemma}\label{lemma:hankel}
Suppose $f(w)$ satisfies the conditions of lemma \ref{lemma:hankelcoefficients} and let $\epsilon$ be a complex number of small modulus. Around a particular singularity, $w = \chi$, we have
\begin{equation}
I_f(\epsilon) := \int_{\mathcal{H}} dw \, e^{-w/\epsilon} f(w) 
    =\int_{\mathcal{H}} dw \, e^{-w/\epsilon} (w-\chi)^{-\alpha}(a_0 + a_1(w-\chi) + \ldots)\,,
\end{equation}
where $\mathcal{H}$ is a Hankel contour centred on $\chi$. The integral may then be evaluated term-by-term to give
\begin{equation} \label{eq:If_Lemma}
    I_f(\epsilon) = \frac{2 \pi i}{\epsilon^{\alpha-1}\Gamma(\alpha)}e^{-\chi/\epsilon}\left(a_0 + (\alpha-1)a_1 \epsilon + (\alpha-1)(\alpha-2)a_2 \epsilon^2 + \ldots\right) \,,
\end{equation}
in the limit $\ep \to 0$. Note that $I_{f}(\epsilon)$ will have a zero radius of convergence in $\epsilon$ if there is an additional singularity besides $w=\chi$ in the analytic continuation of $f$.
\end{lemma}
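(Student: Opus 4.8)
The plan is to reduce the contour integral to the Hankel representation of the reciprocal Gamma function by a single rescaling, and then to integrate the given local expansion term-by-term. First I would substitute $u = (w-\chi)/\epsilon$, so that $w = \chi + \epsilon u$ and $dw = \epsilon\,du$. This factors out the exponential and the appropriate power of $\epsilon$: the generic term $a_k (w-\chi)^{k-\alpha}$ of the local expansion becomes
\begin{equation}
a_k \int_{\mathcal{H}} e^{-w/\epsilon}(w-\chi)^{k-\alpha}\,dw = a_k\, \epsilon^{1-\alpha+k}\, e^{-\chi/\epsilon} \int_{\mathcal{H}} e^{-u}\,u^{k-\alpha}\,du ,
\end{equation}
where the rescaled contour is again a Hankel contour encircling $u=0$. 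The crucial gain is that the remaining $u$-integral no longer depends on $\epsilon$.

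The key ingredient is then the Hankel representation
\begin{equation}
\int_{\mathcal{H}} e^{-u}\,u^{-\beta}\,du = \frac{2\pi i}{\Gamma(\beta)} ,
\end{equation}
which I would take from appendix \ref{sec:gammaappendix} (with the branch and orientation conventions fixed there, so that no spurious phase appears; the leading $a_0$ term pins down the convention). Applying this with $\beta = \alpha - k$ to each term and summing gives
\begin{equation}
I_f(\epsilon) = \frac{2\pi i\, \epsilon^{1-\alpha}}{\Gamma(\alpha)}\, e^{-\chi/\epsilon} \sum_{k\ge 0} \frac{\Gamma(\alpha)}{\Gamma(\alpha-k)}\, a_k\, \epsilon^{k} .
\end{equation}
The elementary factorisation $\Gamma(\alpha)/\Gamma(\alpha-k) = (\alpha-1)(\alpha-2)\cdots(\alpha-k)$ then reproduces the coefficients $1,\,(\alpha-1),\,(\alpha-1)(\alpha-2),\dots$ of \eqref{eq:If_Lemma}, completing the formal computation. (Note that when $\alpha$ is a positive integer the factor $1/\Gamma(\alpha-k)$ vanishes for $k\ge\alpha$, consistently with the fact that integer powers carry no branch cut and give vanishing Hankel integrals.)

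The hard part is justifying that this term-by-term procedure is legitimate and clarifying in what sense the resulting series holds. If $f$ carries a further singularity then the local series $\sum a_k (w-\chi)^k$ has only a finite radius of convergence, so the naive interchange of summation and integration over the full non-compact contour cannot be valid in a convergent sense; this is precisely the origin of the zero radius of convergence of $I_f$ in $\epsilon$ asserted at the end of the statement. To make the argument rigorous I would deform $\mathcal{H}$ so that it lies within the disc of convergence about $\chi$, closed off by two rays running in the direction of exponential decay of $e^{-w/\epsilon}$. On the compact part the interchange is justified by uniform convergence, while the contribution along the rays---where the tail of the local expansion resides---is exponentially small in $1/\epsilon$ relative to $e^{-\chi/\epsilon}$, hence subdominant to every retained term. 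This localisation step is the crux: it simultaneously validates the term-by-term evaluation as an \emph{asymptotic} identity as $\epsilon\to 0$ and explains why \eqref{eq:If_Lemma} is an asymptotic, rather than convergent, expansion.
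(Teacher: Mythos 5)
Your proposal is correct and takes essentially the same approach as the paper: the paper's proof is the single sentence that the result ``follows from term-by-term integration and use of appendix \ref{sec:gammaappendix}'', which is precisely your rescaling $u=(w-\chi)/\epsilon$ followed by the Hankel representation of $1/\Gamma$ applied term-by-term. Your closing localisation argument, justifying the interchange as an \emph{asymptotic} identity and tying it to the zero radius of convergence when $f$ has further singularities, is a detail the paper leaves implicit but is fully consistent with its statement.
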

\begin{proof}
The result follows from term-by-term integration and use of appendix~\ref{sec:gammaappendix}. 
\end{proof}
Compared with the previous lemma~\ref{lemma:hankelcoefficients}, we see that the asymptotic expansion in $\ep$ of the Hankel integral,   $I_f(\epsilon)$, appearing in \eqref{eq:If_Lemma} is very closely related to the asymptotic expansion in $1/n$ of the power-series coefficients of $f$, which appears in \eqref{eq:ffactpow}. We will make clear in the following that this relationship is the basic idea behind resurgence. Above in \eqref{eq:If_Lemma}, note also that the leading (in $\epsilon$) part of the expression is the factor $2 \pi \i / \epsilon^{\alpha-1} \e^{-\chi/\epsilon}$ which is reminiscent of the term obtained by Berry's Stokes smoothing procedure in exponential asymptotics \citep{berry1988stokes} outlined in the introduction; this will be explained in more detail later (c.f. corollary \eqref{cor:stokesswitching}).

\subsection{Asymptotics and trans-series for non-parametric functions}\label{subsec:asymptoticsandtrans}
In the first section of this work, we are concerned with (non-parametric) algebraic asymptotic expansions of the form
\begin{equation} \label{eq:yser}
    y(\epsilon) = y_0 + y_1 \epsilon + y_2 \epsilon^2 + \ldots
\end{equation}
with zero radius of convergence in $\mathbb{C}_{\epsilon}$. We further restrict to so-called Gevrey-1 sequences which satisfy the bound $|f_n| \le A B^n n!$ for some constants $A$ and $B$ and refer to such sequences loosely as factorially divergent. We consider such asymptotic series as an element of the formal power series algebra $\mathbb{C}_1[[\epsilon]]$, where multiplication is pointwise power series multiplication. In the course of this work, we will find it necessary to extend this algebra with exponential symbols $e^{-1/\epsilon}$ and write asymptotic sequences formally as 
\begin{multline}\label{eq:constanttransseries}
    y(\epsilon) = \Bigl[y_0^{(0)} + y_1^{(0)} \epsilon + y_2^{(0)} \epsilon^2 + \ldots \Bigr] + \Bigl[e^{-\chi_1/\epsilon}\epsilon^{-\alpha_1}(y_0^{(1)} + y_1^{(1)} \epsilon + \ldots)\Bigr] \\ + \Bigl[ e^{-\chi_2/\epsilon}\epsilon^{-\alpha_2}(y_0^{(2)} + y_1^{(2)} \epsilon + \ldots) \Bigr] + \ldots
\end{multline}
with $\alpha_i, \chi_i, y_i^{(j)} \in \mathbb{C}$. Formal extended asymptotic sequences of this form are known as log-free, height-$1$ trans-series. Trans-series are closed under many operations; in particular, they are an exponentially-closed ordered differential field, which we denote by $\mathbb{C}_1[[[\epsilon]]]$. We refer the reader to the work of Edgar~\cite{edgar2010transseries} for a thorough pedagogical review of trans-series, we will make little use of the full machinery in the present work. We refer to trans-series of the form \eqref{eq:constanttransseries} as \textit{constant} trans-series in contrast to the series studied later where the components $\chi_i$ and $y_i^{(j)}$ will be functions of an additional holomorphic variable $z \in \mathbb{C}$.

\begin{remark}
Note that the power-series coefficient expansion of a holomorphic function, $f$, seen in \textit{e.g.}~\eqref{eq:fn_trans} yields such a trans-series with $1/n$ playing the role of $\epsilon$.
\end{remark}

\subsection{The Borel transform}\label{subsec:boreltransform}
We now begin our discussion of Borel resummation. The Borel transform, $\mathcal{B}$, is a way of associating a holomorphic germ to an element of $\mathbb{C}_1[[\epsilon]]$. For the moment, let us consider the case of the asymptotic expansion \eqref{eq:yser} with $y_0 \equiv 0$. We shall re-introduce the constant part in the following section. We define $\mathcal{B}:\epsilon \mathbb{C}_1[[\epsilon]] \to \mathcal{O}_0(\mathbb{C})$ by
\begin{equation} \label{eq:Bgerm}
    \mathcal{B}: (y_1,y_2,y_3,\ldots) \to \frac{y_1}{0!} + \frac{y_2}{1!} w + \frac{y_3}{2!} w^2 + \ldots + \frac{y_n}{(n-1)!} w^n + \ldots
\end{equation}
Thus the Borel transform, $\B$, divides-out the factorial divergence of an asymptotic sequence. Notice that the Gevrey-1 condition of the expansion \eqref{eq:yser} ensures that the germ \eqref{eq:Bgerm}, obtained in the above way, will have a non-zero (but not necessarily infinite) radius of convergence; put differently, if the Borel transform of \eqref{eq:yser} has finite radius of convergence, then the original expansion must be asymptotic. In a minor abuse of notation, we shall write 
\[
\mathcal{B}[y](w) \equiv y_B(w),
\]
to denote the holomorphic function obtained by the analytic continuation of the germ generated from $y(\epsilon)$. Subsequently, we may form a Riemann surface,\footnote{$\Sigma$ is the space of homotopy classes of open curves with fixed origin and supported in $\mathbb{C}\backslash \Gamma$. The complex structure on $\Sigma$ is then obtained by pulling back the complex structure on $\mathbb{C} \backslash \Gamma$ with the natural covering map $\Pi: \Sigma \to \mathbb{C}\backslash \Gamma$} $\Sigma$, associated to $y_B(w)$. We denote the set of singular points in the analytic continuation of $y_B$ by $\Gamma_w$.

The map $\mathcal{B}$ may be extended to account for $y_0$ as follows. First, note that the ring $\mathcal{O}_0(\mathbb{C})$ may be further endowed with an algebra structure with product
\begin{equation}\label{eq:starproduct}
    f_B \star g_B (w) = \int_0^w ds f_B(s)g_B(w-s) \,,
\end{equation}
the integral here encodes the formal power series convolution product. This convolutive algebra currently lacks a unit, which we rectify by adjoining a formal $\delta$-function as $\mathcal{O}_0(\mathbb{C}) \oplus \mathbb{C}\delta$ that satisfies $\delta \star f_B = f_B$. The Borel transform may then be extended to all asymptotic series, $\mathbb{C}_1[[\epsilon]]$, by defining, in addition to \eqref{eq:Bgerm} the rule, 
\begin{equation}
    \mathcal{B}: y_0 \mapsto y_0 \, \delta \,.
\end{equation}
Then $\mathcal{B}$ extends to a morphism with respect to the convolution product \eqref{eq:starproduct}. On a practical level, this means that performing the Borel resummation procedure discussed below, the $y_0$ term of a formal asymptotic series is left untouched by the integral transform. We refer the reader to the excellent work of Nikolaev \cite{nikolaev2020exact} for a more thorough review of rigorous aspects of Borel resummation and Gevrey-1 asymptotics.

\paragraph{Resurgent functions.}
One of the great achievements of \'Ecalle~\cite{ecalle1981fonctions} (see also the recent reviews by Sauzin~ \cite{sauzin2014introduction,sauzin2007resurgent}) is to construct an algebra of functions known as \textit{resurgent functions}, defined on the Riemann surface, $\Sigma$. 

A \textit{resurgent function} is a holomorphic function that admits endless analytic continuation. Informally, this means that it must be possible to analytically continue $y_B$ along any ray from the origin by avoiding a discrete set of singularities. The analytic continuation contains only\footnote{In fact, \'Ecalle's original work \cite{ecalle1981fonctions} discusses only so-called simple singularities, i.e. singularities with $\alpha=-1$, together with logarithmic singularities. Generalisations were later made by the same author in \cite{ecalle1992introduction}. In line with typical examples studied in the applied mathematics literature, we exclude logarithmic singularities.} power law singularities of the form \eqref{eq:elementarysingularity}. 

When one restricts to the class of resurgent functions we obtain (in the way outlined above) an algebra, denoted $\mathcal{H}(\Sigma)$, such that the Borel transform is an algebra morphism. This algebra is referred to as the \textit{convolutive model}. The pull-back (pre-image) of $\mathcal{H}(\Sigma)$ by $\mathcal{B}$, denoted $\hat{\mathcal{H}}(\Sigma)$, is the algebra of resurgent asymptotic sequences---referred to as the \textit{multiplicative model}---and this is the place we work throughout this work.

\paragraph{Inverse Borel transform and Borel resummation.}
Above, we have seen how we may associate a holomorphic function, $y_B(w)$, to a divergent series, $y(\epsilon)$, via the Borel transform. We now review the definition of the inverse Borel transform, the asymptotic evaluation of which returns a divergent trans-series. Let $l_{\theta}$ be a ray in the complex plane, $\mathbb{C}_w$, emanating from the origin in the direction $\theta$. The \textit{inverse Borel transform}, $\B^{-1}$, in the direction $\theta$ is defined by
\begin{equation}\label{eq:inverseborel}
    Y_{\theta}(\epsilon) := \B^{-1}[y_B](\ep) = \int_{l_{\theta}} \de{w} \,  \e^{-w/\epsilon} \, y_B(w) \,. 
\end{equation}
We say that $y(\epsilon)$ is \emph{Borel summable} in the direction $\theta$ if this integral converges to a holomorphic function.

Now, suppose that $y_B$ is Borel summable in the direction $\theta=0$. From the definition of the Gamma function via \eqref{eq:gammafunction},  we may compute the asymptotics of $Y(\epsilon) := Y_{\theta=0}$ to be
\begin{equation}
    Y(\epsilon) = \epsilon \left(\frac{y_1}{0!}\right) + \epsilon^2 \left(\frac{y_2}{1!}\right) + 2! \epsilon^3 \left(\frac{y_3}{2!}\right) +  \ldots,
\end{equation}
which may be compared with $y(\ep) - y_0$ in \eqref{eq:yser}. Hence, we may regard the Gamma function integral as re-introducing the factorial divergence from $y_B$ to $y$. This procedure of taking a divergent series, $y(\epsilon)$, associating to it a holomorphic germ (or power series), $y_B$, with nice analytic continuation properties, and then writing down a holomorphic function, $Y(\epsilon)$, with the same asymptotics as $y - y_0$ is known as \textit{Borel resummation}. In Figure~\ref{fig:borelresum}, we illustrate the procedure and relationship between divergent series, the Borel transform, and the inverse Borel transform.

\begin{figure}[htb]
    \centering
    \includegraphics{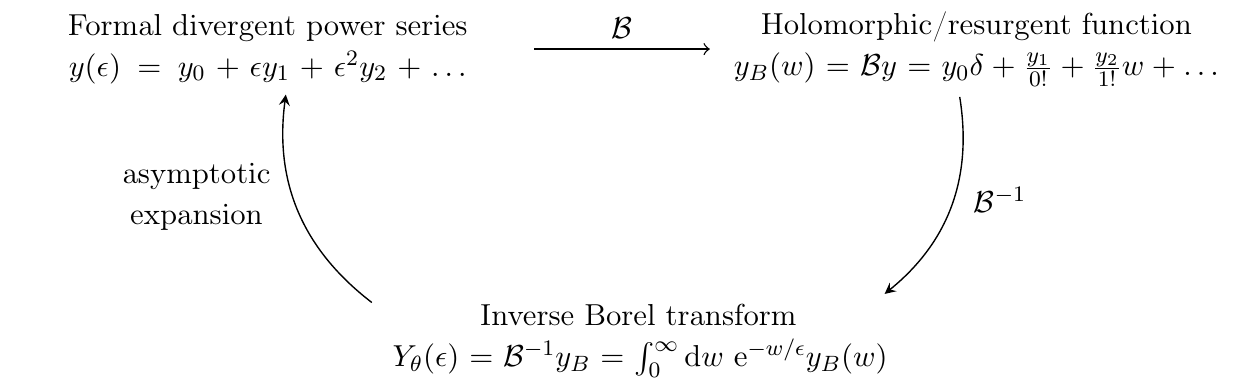}
    \caption{An illustration of the relationships between divergent series in powers of $\epsilon$, with the Borel transform, $y_B$, and subsequently with the inverse Borel transform, $Y_\theta$.}
    \label{fig:borelresum}
\end{figure}

\paragraph{Replacement rules for $y_B$.}
Above, we have explained the definition of the Borel resummation procedure of a given divergent series. Suppose now that $y(\epsilon)$ arises as the perturbative solution to a differential equation (which may include, for example, linear, non-linear or difference terms).
Rather than solving the equation directly in physical space $\mathbb{C}_{\epsilon}$, we may seek a solution of the form
\begin{equation} \label{eq:y_invB}
    y(\epsilon) = y_0 + \int_{(0,\infty)} \de{w} \,\e^{-w/\epsilon}y_B(w),
\end{equation}
so now the Borel transform, $\B y = y_B(w)$, is determined without explicit manipulation of the perturbative expansion for $y$. In order to determine $y_B$, the differential equation in $y$ is transformed to a differential equation for $y_B$. Notice that \eqref{eq:y_invB} is the inverse Laplace transform of $y_B(w)$ with respect to $\eta=1/\epsilon$ and hence the following is familiar from standard Laplace transform theory:

\begin{lemma}[Replacement rules]\label{lemma:replacementrules}
Given a differential equation for $y = y(\ep)$, a governing equation for $y_B$ is derived using the following observations
\begin{equation} \label{eq:reprules01}
\mathcal{B}\left[\frac{1}{\ep} y\right] = -y_B'(w), \qquad 
\mathcal{B}\left[\ep^2 \dd{y}{\ep}\right] = -w y_B(w), \qquad 
\mathcal{B}\left[y(\ep)^2\right] = y_B \star y_B(w).
\end{equation}
We shall refer to the above relations as `replacement rules', and these yield the governing equation for the Borel transform $y_B(w)$.
%

\end{lemma}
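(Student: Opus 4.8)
The plan is to verify the three replacement rules in \eqref{eq:reprules01} directly from the definition of the Borel transform on germs, \eqref{eq:Bgerm}, treating each identity as a statement about formal power series and then noting consistency with the inverse-Laplace representation \eqref{eq:y_invB}. I would work termwise. Writing $y = \sum_{n\ge 1} y_n \ep^n$ so that $y_B(w) = \sum_{n\ge 1} \frac{y_n}{(n-1)!} w^{n-1}$, each rule amounts to checking that the stated operation on the left-hand side produces exactly the Borel transform of the right-hand side, coefficient by coefficient.

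For the first rule, $\mathcal{B}[\ep^{-1} y]$, I would note that dividing by $\ep$ shifts $y = \sum_{n\ge 1} y_n \ep^n$ to $\sum_{n\ge 1} y_n \ep^{n-1} = \sum_{m \ge 0} y_{m+1}\ep^m$; applying \eqref{eq:Bgerm} (with the appropriate treatment of the index shift) yields the series $\sum_{m\ge 1}\frac{y_{m+1}}{(m-1)!}w^{m-1}$, which I would then identify as $-y_B'(w)$ by differentiating the defining series for $y_B$ termwise and matching coefficients. The sign is the essential bookkeeping point and follows from the $\frac{1}{(n-1)!}$ weighting. For the second rule, $\mathcal{B}[\ep^2 y']$, I would compute $\ep^2 \dd{y}{\ep} = \sum_{n\ge 1} n\, y_n \ep^{n+1}$ and apply \eqref{eq:Bgerm}, obtaining a series whose coefficients match those of $-w\, y_B(w)$ after reindexing. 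The third rule, $\mathcal{B}[y^2] = y_B \star y_B$, is exactly the statement that $\mathcal{B}$ intertwines pointwise multiplication in $\mathbb{C}_1[[\ep]]$ with the convolution product \eqref{eq:starproduct}; here I would expand $y^2 = \sum_n \bigl(\sum_{k} y_k y_{n-k}\bigr)\ep^n$, apply the transform, and compare with the convolution integral $\int_0^w y_B(s) y_B(w-s)\,\de{s}$ evaluated on monomials using the Beta-integral identity $\int_0^w s^{j} (w-s)^{k}\,\de s = \frac{j!\,k!}{(j+k+1)!} w^{j+k+1}$, which is precisely what reconciles the $\frac{1}{(n-1)!}$ factorial weights with the combinatorial convolution coefficients.

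I would close by remarking that each identity can equivalently be read off from Laplace-transform theory applied to \eqref{eq:y_invB}: multiplication by $\eta = 1/\ep$ corresponds to differentiation in $w$ (up to boundary terms that vanish under the Borel normalisation), and multiplication of transforms corresponds to convolution. This is the sense in which the lemma is, as the text states, \emph{familiar from standard Laplace transform theory}, so I would present the formal-series computation as the rigorous core and cite the Laplace correspondence as motivation rather than proof. The main obstacle, such as it is, is purely notational: keeping the index shifts and the $(n-1)!$ normalisation consistent so that the signs and the $w$-prefactors come out correctly, and ensuring the convolution identity's Beta-function factor cancels cleanly against the factorial weights rather than introducing stray constants.
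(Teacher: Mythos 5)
Your overall strategy---verifying each rule termwise from the definition \eqref{eq:Bgerm}, with a Beta-integral computation for the convolution rule---is a legitimate route and genuinely different from the paper's own proof, which instead writes down the inverse transform \eqref{eq:inverseborel} and integrates by parts; your treatment of the third rule is correct as sketched. The problem is in the first two rules, and it sits exactly at the step you wave off as ``bookkeeping''. The series you (correctly) compute for $\B[\ep^{-1}y]$ is $\sum_{m\ge 1}\tfrac{y_{m+1}}{(m-1)!}w^{m-1}$, but termwise differentiation of $y_B(w)=\sum_{n\ge 1}\tfrac{y_n}{(n-1)!}w^{n-1}$ gives
\begin{equation*}
y_B'(w)=\sum_{n\ge 2}\frac{y_n}{(n-2)!}\,w^{n-2}=\sum_{m\ge 1}\frac{y_{m+1}}{(m-1)!}\,w^{m-1},
\end{equation*}
which is exactly the same series \emph{with a plus sign}. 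Nothing in the formal computation can produce a minus sign, so the claimed identification with $-y_B'(w)$ is false. Likewise for the second rule: $\B\bigl[\ep^2\,\tdd{y}{\ep}\bigr]=\sum_{m\ge 2}\tfrac{y_{m-1}}{(m-2)!}w^{m-1}=+w\,y_B(w)$, not $-w\,y_B(w)$. You also silently discard the constant term of $\ep^{-1}y$, namely $y_1$, which the extended transform sends to $y_1\delta$ (this is the boundary term $y_B(0)$ in the paper's integration by parts); your remark that boundary terms ``vanish under the Borel normalisation'' is not true in general, since $y_B(0)=y_1\neq 0$ generically.

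What this reveals is that the minus signs printed in \eqref{eq:reprules01} are themselves a misprint, and your method, carried out honestly, would expose it rather than reproduce it. The paper's own applications confirm the plus-sign versions: transforming the Euler equation \eqref{eq:euler} must yield $w\,y_B+y_B=1$, i.e.\ $y_B=1/(1+w)$ with singularity at $w=-1$ exactly as the paper states (the printed rule would give $1/(1-w)$, singular at $w=+1$), and the later parametric lemma sends $\ep^{-1}\to+\partial_w$. So the identities your computation actually proves are $\B[\ep^{-1}y]=y_B'(w)+y_1\delta$ and $\B\bigl[\ep^2\,\tdd{y}{\ep}\bigr]=w\,y_B(w)$; a sound write-up must either prove these corrected rules and flag the discrepancy with the statement, or fail. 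Asserting that your (correct) coefficient computation matches the (incorrect) printed right-hand sides is the gap.
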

\begin{proof}
These follow directly from the definition of the inverse Borel transform \eqref{eq:inverseborel}. For instance, we write $\B^{-1}[-y_B']$, apply formula \eqref{eq:inverseborel}, and integrate by parts to derive the first rule of \eqref{eq:reprules01}.
\end{proof}

Later we shall also discuss the Borel transform of difference equations. In this case, it is easier to write expressions in terms of $\ep = 1/\eta$ with $y(\ep) = y(1/\eta) \equiv f(\eta)$. Then for instance, one may verify that 
\begin{equation} \label{eq:rep_discrete}
    \mathcal{B}[f(\eta+1)] = \e^{-w}y_B(w).
\end{equation}

\subsection{Stokes phenomena}
\label{subsec:stokesphenom}

When computing the inverse Borel transform, \eqref{eq:inverseborel}, there may be directions $l_{\vartheta}$ along which the integrand, $y_B(w)$, is singular. As one varies $\epsilon \in \mathbb{C}_\epsilon$ the convergence of the integral requires us to adjust the contour in $\mathbb{C}_w$ with $\arg \epsilon$. If the contour $l_\vartheta$ reaches a singularity of $y_B$ then, in order to keep the integration class constant (i.e. the inverse Borel transform continuous), we pick up a contribution from a Hankel contour around the singularity. This procedure is shown in \cref{fig:hankeljump} and an example is given later in \cref{subsec:irregularsingularpoints}. 

\begin{figure}[htb]
    \centering
    \includegraphics[scale=0.3]{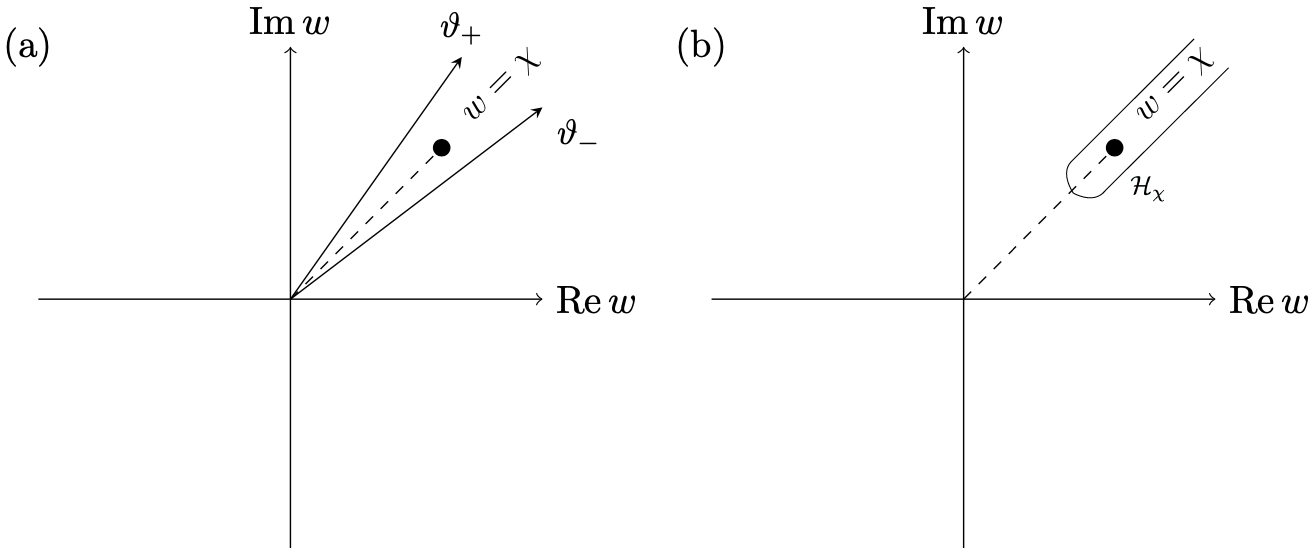}
    \caption{(a) A singularity lies at $w= \chi$. As $\ep$ varies in $\mathbb{C}_\ep$, it is necessary to adjust the integration contour $l_\vartheta$. If the integration contour crosses $\chi$, we include the contribution from integration along $l_{\vartheta_+} - l_{\vartheta_-}$. This yields the Hankel contour shown in (b).}
    \label{fig:hankeljump}
\end{figure}

In more detail, suppose that $y_B(x)$ has a singularity at $w=\chi$ with $\arg \chi = \vartheta$, then we see that
\begin{equation} \label{eq:tmp1}
    Y_{\vartheta_+}(\epsilon) - Y_{\vartheta_-}(\epsilon) = \int_{\mathcal{H}_{\chi}} \de{w} \, \e^{-w/\epsilon} y_B(w) \,.
\end{equation}
Hence the resummation of the divergent series is ambiguous in this direction. Suppose further that the singularity is of power law form so that
\begin{equation}
    y_B(w) = (w-\chi)^{-\alpha}(a_0 + a_1(w-\chi) + \ldots)  \,,
\end{equation}
then the integral \eqref{eq:tmp1} may be evaluated using lemma \ref{lemma:hankel} to give
\begin{equation}\label{eq:jump}
    Y_{\vartheta_+}(\epsilon) - Y_{\vartheta_-}(\epsilon) = \frac{2 \pi \i}{\epsilon^{\alpha-1}} \e^{-\chi/\epsilon}\left(\frac{a_0}{\Gamma(\alpha)} + O(\epsilon) \right) \,.
\end{equation}
The line $l_{\vartheta} \subset \mathbb{C}_{\epsilon}$ is then called a \textit{Stokes line} and the corresponding jump in the asymptotics as described by equation \eqref{eq:jump} is called \textit{Stokes phenomena}. In particular,  we see that across $l_{\vartheta}$ the asymptotics of $y_0 + Y(\epsilon)$ jump\footnote{Berry \cite{berry_1989} shows that this transition is smooth by evaluating the asymptotics of the integral within a different, boundary layer, regime. This is known as Stokes' smoothing.} according to
\begin{equation}
    y_0 + Y(\epsilon) \to \Bigl[ y_0 + \epsilon y_1 + \ldots \Bigr] +  \frac{1}{\epsilon^{\alpha-1}}e^{-\chi/\epsilon}(c_0 + \ldots), 
\end{equation}
for the constant $c_0$ specifically given by the pre-factors in \eqref{eq:jump}. In general, one may have multiple singularities on $\mathbb{C}_w$, and the corresponding collection of Stokes lines in $\mathbb{C}_{\epsilon}$ form a \textit{Stokes graph}.

\paragraph{Late terms.}
In applied exponential asymptotics approaches, it is common to analyse the late terms of perturbative solutions to differential equations \cite{boyd1999devil,chapman_1998_exponential_asymptotics,segur2012asymptotics}. We discuss how the trans-series structure of late-terms relates to (it is identical up to numerical constants) the trans-series structure of a divergent series $y(\epsilon)$. We first reinterpret Lemma~\ref{lemma:hankelcoefficients}. Consider $n$ a large parameter and write
\begin{equation}
    f(x) = f_0 + f_1 x + f_2 x^2 + \ldots + f_n x^n + \ldots
\end{equation}
then writing $\mathcal{B}[f_n]$ for the Borel transform of the function $f_n$ we see that Lemma~\ref{lemma:hankelcoefficients} takes the schematic form\footnote{There is now no `perturbative' contour and we only integrate over the Hankel contours $\mathcal{H}_\chi$.}
\begin{equation}
    \mathcal{B}[f_n](w) = \B[f](e^{w}) \,,
\end{equation}
Namely, the Borel transform of the coefficients may be obtained by a simple conformal map of $f(x)$. In more detail, given a perturbative series $y(\eta)$ with Borel sum $f(x)$, then the trans-series structure of $y(\eta)$ is the same (up to a conformal map) as the trans-series structure of the late terms $f_n$ of the Borel transform $f(x)$

\begin{example}
We can verify this in an example. Consider the following differential equation\footnote{We suppose the boundary conditions are such that the inverse Borel transform of $y_B$ with $\gamma=(0,\infty)$ satisfies them.} for $f(\eta) = y(\ep)$, 
\begin{equation}
-2 \eta f''(\eta) + (6 \eta + 6)f'(\eta) + (4 \eta + 10) f(\eta) = 0 \,.
\end{equation}
The use of the replacement rules and Lemma~\ref{lemma:replacementrules} indicates that the Borel transform, $y_B$, satisfies
\begin{equation}\label{eq:coefficientexample}
    2(1-w)(2-w)y_B'(w) - (2w-3)y_B(w) = 0 \,.
\end{equation}
Seeking a power series solution $y_B(w) = c_0 + c_1 w + c_2 w^2 +\ldots $ we find the recurrence relation
\begin{equation}
    4(n+1)c_{n+1} +(3-6n)c_n + 2(n-2)c_{n-1} = 0 \,, \quad n>2\,.
\end{equation}
We may now take the Borel transform again in order to determine $c_n$. From the discrete rule \eqref{eq:rep_discrete} the Borel transform of the coefficients, $c_B(w)$, solves
\begin{equation}
    c_B'(w)(4e^{-w}-6+2e^w) + c_B(w)(3-2 e^w) = 0\,,
\end{equation}
This may be integrated to
\begin{equation}
    c_B(w) \propto \left((e^w-2)(e^w-1)\right)^{1/2} \,.
\end{equation}
If one solves the first-order differential equation \eqref{eq:coefficientexample} then we immediately verify
\begin{equation}\label{eq:coefficientexampleconclusion}
    c_B(w) = \B[c_n](w) \propto y_B(e^w) = \B[y](\e^w) \,.
\end{equation}
Note that, according to lemma \ref{lemma:hankel}, the contour in the inverse Borel transform of $c_B$ should be a sum of Hankel contours around the singularities at $x=\log 2$ and $x = 0$ in order to recover $c_n$.
\end{example}
The motto of this subsection is as follows. Suppose that $y(\epsilon)$ is a perturbative solution to a differential equation. To understand the trans-series structure of $y(\epsilon)$ it is equivalent to work with the $1/n$ trans-series structure of the late terms in the expansion of $y(\epsilon)$ or to work with the $\epsilon$ trans-series structure of $y(\epsilon)$ directly. Later we will take a third viewpoint and show how, for holomorphic trans-series, the same data is encoded in a parametric holomorphic function.

\begin{example}\label{subsec:irregularsingularpoints}
The machinery of Borel resummation is a powerful tool to understand solutions of linear ordinary differential equations about irregular singular points. About such points, power-series solutions are divergent and resummation methods may be employed. We refer the reader to the comprehensive three-volume work by Delabaere \textit{et al.}~\cite{delabaere2016divergent} for an excellent review of resummation techniques. Later we will compare to the following example to see the subtle differences in which Stokes lines are interpreted in singularly perturbed problems that are the main focus of the present work.

A canonical example of a linear first-order equation with an irregular singular point is Euler's equation, given by
\begin{equation}\label{eq:euler}
    \epsilon^2 \dd{y}{\ep} +  y = \ep,
\end{equation}
with the boundary condition $y(\epsilon) \to 0$ as $\ep \to \infty$. The equation has an irregular singular point at $\epsilon=0$ and so the power series solution is only asymptotic in a sectorial domain in $\mathbb{C}_\ep$ with corner at $\epsilon = 0$. Initially, we shall consider \eqref{eq:euler} as defined along the positive real $\ep$-axis, and then consider the solution as $\ep$ varies.

Using Lemma~\ref{lemma:replacementrules}, we Borel transform \eqref{eq:euler}. This yields $y_B = 1/(w + 1)$, which has singular set $\Gamma_w = \{-1\}$ and is Borel summable everywhere except along the negative real axis. We may then recover a locally holomorphic solution to the ODE by substituting $y_B$ into the integral representation \eqref{eq:y_invB}. In general, for $\ep > 0$, the contour $\gamma$ may be composed of $\gamma_0 = (0,\infty)$ and the additional loop, $\gamma_1$, around $\Gamma = \{-1\}$. Thus we may write the general solution as
\begin{equation}
    y(\epsilon) = \int_{\gamma_0 = (0, \infty)} \de{w} \,\frac{\e^{-w/\epsilon}}{1+w} + c \int_{\gamma_1} \de{w}\, \frac{\e^{-w/\epsilon}}{1+w} \,.
\end{equation}
Imposing the boundary condition of $y \to 0$ as $\ep \to \infty$, then we find that $c=0$. Now, we may analytically continue the integral onto $\mathbb{C}_\ep \backslash \mathbb{R}_{<0}$ by adjusting the contour appropriately as $\epsilon$ is varied to $\gamma_0 = \e^{i \text{arg}(\ep)}(0,\infty)$. Along the negative real axis we see that the solution jumps according to
\begin{equation}
    y_{+} - y_{-} = \int_{\mathcal{H}_{-1}} \de{w} \, \frac{\e^{-w/\epsilon}}{1+w}.
\end{equation}
The Hankel contour in this case picks up the residue of the simple pole at $w=-1$ so asymptotically,
\begin{equation}
\begin{split}
    y_-(\epsilon) &= \epsilon - \epsilon^2 + 2! \epsilon^3 - 3! \epsilon^4 + \ldots + 2 \pi i \e^{1/\epsilon} \,, \\
    y_+(\epsilon) &= \epsilon - \epsilon^2 + 2! \epsilon^3 - 3! \epsilon^4 + \ldots \,,
\end{split}    
\end{equation}
and the asymptotic expansions on either side of the Stokes line $l = (-\infty,0)$ differ by an exponentially small term. The relevant Stokes graph is then as shown in figure \ref{fig:euler}.
\end{example}

\begin{figure}[htb]
    \centering
    \includegraphics{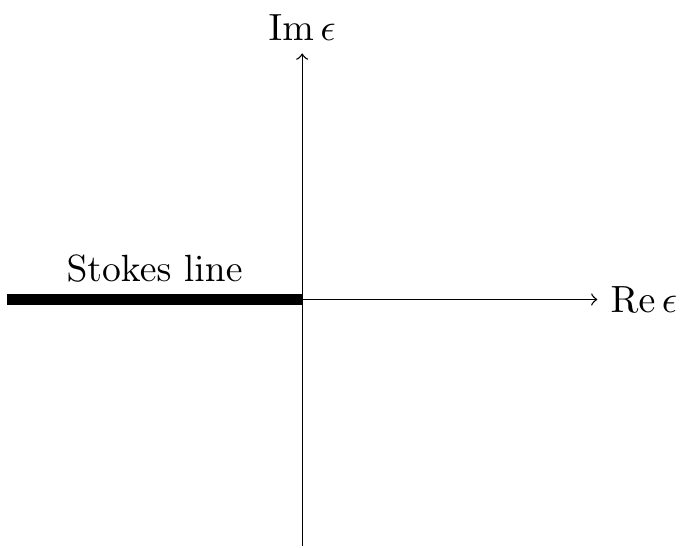}
    \caption{Stokes lines for Euler equation \eqref{eq:euler}. The asymptotic expansions on either side of the Stokes line differ by an exponentially small term, $2\pi \im \e^{1/\ep}$.}
    \label{fig:euler}
\end{figure}

\begin{table}[!htbp]\footnotesize
\begin{tabular}{p{6cm} p{8cm}}
\toprule
{\scshape school/area} & 
{\scshape references and selection of examples}  \\[1em]

\textbf{Hyperasymptotics}
& Factorial-over-power divergence \cite{dingle1973asymptotic};  Stokes' line smoothing \cite{berry_1989}; Hyperasymptotics \cite{berry1990hyperasymptotics,daalhuis_1993}; hyperterminants \cite{olde1996hyperterminants, olde1998hyperterminants}. Stokes' smoothing \cite{berry1988stokes, mcleod1992smoothing}. See  \cite{boyd1999devil} for a comprehensive review of superasymptotics and hyperasymptotics. \\[0.5em]

\textbf{Borel summation in DEs} \newline
The use of Borel resummation methods in the study of (often non-linear) differential equations. Painleve often serves as a tractable non-linear set of examples. 

& Comprehensive monograph of Costin \cite{costin2008asymptotics}; proof of Borel summability of rank one ODEs \cite{costin608408borel}; Tronque\'e solutions of Painlev\'e \cite{costin2015tronquee}; Borel summability of linear meromorphic ODEs \cite{balser1991multisummability}; Differential Galois theory monograph~\cite{kolchin1973differential}. \\[0.5em]

\textbf{Resurgence and alien calculus} \newline
Resurgence theory defines an algebra of resurgent functions and elucidates the self-similar nature of perturbative and trans-series. The theory develops the structure of Stokes' phenomena in terms of the Alien calculus.
& \'Ecalle's seminal work \cite{ecalle1981fonctions}. Reviews with physics focus \cite{Aniceto:2018bis,dorigoni2019introduction}. Excellent three volume series \cite{delabaere2016divergent}. Comprehensive lecture notes \cite{sauzin2014introduction} \\[0.5em]

\textbf{Geometry of Stokes' phenomena} \newline
The study of the monodromy structure of meromorphic connections with (ir-)regular singularities over algebraic curves. 

& Boalch's work on the topology of Stokes phenomena \cite{boalch2019topology}. Wild character varieties \cite{boalch2014geometry,boalch2015twisted}. Overview of Deligne's work on the Riemann-Hilbert correspondence \cite{katz1976overview} (Deligne's original work \cite{deligne2006equations}). Stokes structures \cite{sibuya1977stokes} (see also review by Sabbah \cite{sabbah2012introduction}). \\[0.5em]

\textbf{Applied exponential asymptotics} \newline
The use of the factorial-over-power ansatz to understand beyond-all-orders phenomena in the (mostly classical) physical sciences.
& See the compendiums by Segur \emph{et al.} \cite{segur2012asymptotics} and Boyd \cite{boyd1999devil} for a review of many applied problems in exponential asymptotics. Factorial-over-power methodology by Chapman \textit{et al.}~\cite{chapman_1998_exponential_asymptotics}; Saffman-Taylor fingering by Tanveer~\cite{tanveer1987analytic}. Applications to PDEs \cite{chapman2005exponential}. Free-surface flow problems with coalescing singularities. \cite{trinh2015exponential} \\[0.5em]

\textbf{Exact WKB analysis} \newline
Borel summability of the WKB approximation in the study of Schr\"odinger equations.

& Aoki-Kawai-Takei rigorously construct Borel transformed WKB solutions and explain the Vorus connection formula \cite{aoki1991bender}. Applications of microlocal analysis to the study of WKB Borel sums \cite{aoki1993microlocal}. See the comprehensive review monograph \cite{kawai2005algebraic}. Recent rigorous treatment of Borel summability \cite{nikolaev2020exact}. \\[0.5em]

\textbf{Applications in theoretical HEP} \newline
Resurgence and exact WKB analysis has recently been applied in the study a wide range of non-perturbative phenomena, and their associated geometrical structures, in gauge and string theory.
& A selection of examples: Spectral networks \cite{Gaiotto:2012rg}, ODE-IM correspondence \cite{Dorey:1999uk}, quantum curves and topological string theory \cite{Grassi:2014cla}, quantum knot invariants and Chern-Simons theory \cite{Garoufalidis:2020nut, gukov2016resurgence}. Gromov-Witten theory See comprehensive review \cite{Aniceto:2018bis}. \\

\bottomrule
\end{tabular}
\caption{A selection of references spanning different areas of beyond-all-orders asymptotics}\label{table:summary}
\end{table}

\section{Parametric Borel resummation} \label{sec:parametricborel}

In the previous section, we reviewed (non-parametric) asymptotic sequences, their Borel transform, and the associated resurgent properties. The corresponding theory for parametric asymptotic sequences of the form
\begin{equation}\label{eq:parametricsequence}
    y(z;\epsilon) = y_0(z) + \epsilon y_1(z) + \epsilon^2 y_2(z) + \ldots
\end{equation}
where now $z\in\mathbb{C}$, may be developed similarly but now such sequences exhibit new features due to the possibility of distinguished limits involving $z$ and $\ep$. In this section, we shall review the application of the Borel transform to sequences such as \eqref{eq:parametricsequence}, and discuss the notion of boundary layers in parametric trans-series.

Analogously to \cref{subsec:asymptoticsandtrans} we have a notion of Gevrey-1 asymptotics for \eqref{eq:parametricsequence} whereby we require that $y(z;\epsilon)$ is Gevrey-1 in $\epsilon$ for each $z \in \mathbb{C}_z$ away from a discrete set of singularities.  We denote the algebra of such sequences by $\mathbb{C}_1[[\epsilon]](z)$. We further restrict to resurgent sequences (that is, for fixed $z\in \mathbb{C}_z$ we suppose the resulting asymptotic sequence is resurgent in the sense of section \ref{subsec:boreltransform}). 

Analogous to \eqref{eq:Bgerm}, given a series of the form \eqref{eq:parametricsequence} we may define a parametric Borel transform where $z \in \mathbb{C}_z$ is a (holomorphic) parameter
\begin{equation}\label{eq:parametricboreltransform}
    \mathcal{B}: \left(y_1(z),y_2(z),\ldots \right) \to y_B(w,z) := \sum_{n=0}^{\infty} \frac{y_{n+1}(z)}{n!}w^n \,.
\end{equation}
The Borel transform $y_B(w,z)$ may now be considered a function on (possible covers of) $\mathbb{C}_z \times \mathbb{C}_w$. Note that $w \in \mathbb{C}_w$ is the distinguished variable here, since we will integrate it out to obtain a perturbative series. For fixed $z \in \mathbb{C}_z$ we may consider the analytic continuation of $y_B(w,z)$ which we again suppose has power law singularities at $w=\chi_i(z)$ with local powers $\alpha_i$---we denote this singular set by $\Gamma_w(z)$. Later, to be defined in \cref{subsec:boundarylayers}, we will also make use of a singular set $\Gamma_z$ associated to the physical plane $\mathbb{C}_z$ that roughly corresponds to singularities of early perturbative terms. In contrast with \cref{sec:background}, the curve $\Sigma_z$ associated to $y_B(w,z)$ now depends on $z \in \mathbb{C}_z$: the singularity locations and local expansions all depend on an additional holomorphic parameter.

We may similarly define a holomorphic function that recovers the asymptotics of $y(z;\epsilon)$ by the inverse Borel transform (cf. \eqref{eq:inverseborel})
\begin{equation}\label{eq:parametricborel_inversion}
    y(z;\epsilon) = y_0(z) + \int_{(0,\infty)} \de{w} \, \e^{-w/\epsilon} y_B(w,z) \,.
\end{equation}
We note that any parametric resurgent function $y_B(w,z)$ will define a (asymptotic) parametric series via this relation.

\subsection{Stokes lines in parametric series}
Stokes lines now arise in a subtly different way to the Stokes lines discussed in \cref{subsec:stokesphenom}. We now consider a fixed contour\footnote{The particular choice $\gamma =(0,\infty)$ is a consequence of the fact that we typically consider $\epsilon$ to be real---otherwise the perturbative part of the contour may be appropriately rotated.} $\gamma = (0,\infty)$ for the inverse transform \eqref{eq:parametricborel_inversion} and consider varying $z \in \mathbb{C}_z$. Along the following locus in $\mathbb{C}_z$:
\begin{equation}
    l_{\chi}: \Im \chi(z) = 0 \, \quad \Re \chi(z) > 0 \,,
\end{equation}
the contour, $\gamma$, intersects a singularity in $\Gamma_w(z)$ and, in order to keep the integration class constant, the contour is deformed to include a Hankel contour about the singularity (this is \textit{Stokes Phenomenon}). The union of all such $l_{\chi}$ define a \textit{Stokes graph} $l = \bigcup l_{\chi}$ on $\mathbb{C}_z$ as opposed to a Stokes graph on $\mathbb{C}_{\epsilon}$ as in \cref{subsec:stokesphenom}. Crossing such a locus thus adds exponential terms to the trans-series expansion. 

Let us introduce parametric dependence into \cref{lemma:hankel} and restrict to the case of a power law singularity at $w = \chi(z)$ for $y_B$ with power $-\alpha_\chi$. Then from \eqref{eq:parametricborel_inversion} we that across a Stokes line, $l_{\chi}$, the following term enters the trans-series
\begin{multline} \label{eq:stokeswitch_parametric}
\int_{\mathcal{H}_i} \de{w} \, \e^{-w/\epsilon}  (w-\chi(z))^{-\alpha_{\chi}}(a_0^{\chi}(z) + (w-\chi(z))a_1^{\chi}(z) + \ldots ) \\
    = \frac{2 \pi \i \ \e^{-\chi(z)/\epsilon}}{\epsilon^{\alpha_{\chi}-1}}(a_0^{\chi}(z) + \epsilon a_1^{\chi}(z) + \ldots)\,,
\end{multline}
where switching the integral and summation generally gives an asymptotic expansion on the right-hand side. In the applied exponential asymptotics literature when studied in the context of solutions, $y(z; \ep)$, of singularly perturbed differential equations, the derivation of this expansion in $\epsilon$ is referred to as \textit{Stokes switching}. The result is often arrived at by the Stokes smoothing arguments of Berry \cite{berry1988stokes} where properties of the governing (possibly non-linear \cite{chapman_1998_exponential_asymptotics}) differential equation are used to compute the jump across a boundary layer close to the Stokes line. For the leading-order exponentially small correction, the argument may be succinctly summarised by the following:
\begin{corollary}\label{cor:stokesswitching}
Suppose that a perturbative sequence \eqref{eq:parametricsequence} possesses the late-term asymptotics,
\begin{equation} \label{eq:cor_late}
    y_{n+1}(z) \sim \frac{n^{\alpha-1}n! (-1)^{\alpha}a(z)}{\chi(z)^{n+\alpha} \Gamma(\alpha)}, \qquad \text{as $n \to \infty$}\,,
\end{equation}
and the Borel transform has an associated power law singularity. Then, to leading order, the trans-series expansions of the Borel sum of the $y_n(z)$ differ by the following term across Stokes lines:
\begin{equation} \label{eq:cor_expswitch}
    \sim \left[\frac{2 \pi \i a(z)}{\epsilon^{\alpha-1}\Gamma(\alpha)}\right] \e^{-\chi(z)/\epsilon} \,.
\end{equation}
\end{corollary}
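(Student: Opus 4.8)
The plan is to route the entire argument through the Borel plane, using the prescribed late-term data to pin down the leading singularity of the parametric Borel transform $y_B(w,z)$ and then reading off the Stokes jump directly as a Hankel integral. There are three steps: (i) translate the late-term asymptotics \eqref{eq:cor_late} into the leading singular behaviour of $y_B(w,z)$; (ii) identify the jump of the Borel sum across the Stokes line $l_\chi$ with the Hankel-contour integral of $y_B$; and (iii) evaluate that integral to leading order via \cref{lemma:hankel}.

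For step (i), I would form the coefficients of the parametric Borel transform \eqref{eq:parametricboreltransform}, namely $b_n(z) := y_{n+1}(z)/n!$, and divide the hypothesised asymptotics \eqref{eq:cor_late} by $n!$ to obtain
\begin{equation}
    b_n(z) \sim \frac{n^{\alpha-1}(-1)^{\alpha}}{\chi(z)^{n+\alpha}}\,\frac{a(z)}{\Gamma(\alpha)} \qquad \text{as } n\to\infty.
\end{equation}
This is precisely the leading term of the Darboux formula \eqref{eq:ffactpow_1}. Comparing the two, and using the hypothesis that $y_B$ carries a power-law singularity, the coefficient asymptotics force $y_B(w,z)$ to have a singularity at $w=\chi(z)$ of type $(w-\chi(z))^{-\alpha}$ with leading coefficient $a_0 = a(z)$. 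This is the inverse reading of \cref{lemma:hankelcoefficients}: rather than computing coefficients from a known singularity, we recover the leading singular datum from the coefficients.

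For steps (ii) and (iii), I would invoke the parametric Stokes mechanism described above: as $z$ crosses $l_\chi$ with $\gamma=(0,\infty)$ held fixed, the inverse-transform contour must deform past $w=\chi(z)$, so the two Borel sums differ exactly by the Hankel integral (cf.\ \eqref{eq:stokeswitch_parametric}). Applying \cref{lemma:hankel}, equation \eqref{eq:If_Lemma}, to this integral with $a_0 = a(z)$ gives
\begin{equation}
    \int_{\mathcal{H}_{\chi}} \de{w}\,\e^{-w/\epsilon}\,y_B(w,z) = \frac{2\pi\i}{\epsilon^{\alpha-1}\Gamma(\alpha)}\,\e^{-\chi(z)/\epsilon}\bigl(a(z)+O(\epsilon)\bigr),
\end{equation}
whose leading term is exactly \eqref{eq:cor_expswitch}, completing the argument.

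The main obstacle is the inverse step (i): asserting that the prescribed coefficient asymptotics determine the leading singularity of $y_B$. The forward Darboux direction is what \cref{lemma:hankelcoefficients} and \eqref{eq:ffactpow_1} supply; the converse is a Tauberian-type statement and is not automatic in general. Here it is legitimate precisely because the hypothesis already posits a power-law singularity, so only the single leading coefficient $a_0$ must be matched, and because near $l_\chi$ the singularity at $w=\chi(z)$ is the one the contour first encounters---any more distant singularities contribute jumps suppressed by a further factor $\e^{-(\chi'-\chi)/\epsilon}$ and are therefore beyond the leading exponential order. For fixed $z$ away from $\Gamma_z$ the whole computation is identical to the non-parametric case, so the holomorphic $z$-dependence simply rides along as a parameter.
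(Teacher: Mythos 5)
Your proposal is correct and follows essentially the same route as the paper's proof: both match the late-term data \eqref{eq:cor_late} against the leading term \eqref{eq:ffactpow_1} of \cref{lemma:hankelcoefficients} to identify the Borel singularity datum $a_0 = a(z)$ at $w=\chi(z)$, and then evaluate the Hankel-contour jump across the Stokes line via \eqref{eq:If_Lemma} of \cref{lemma:hankel}. Your explicit flagging of the Tauberian (inverse-Darboux) step is a useful clarification, but it is precisely what the corollary's hypothesis of an associated power-law singularity is there to license, so the substance of the two arguments coincides.
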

\begin{proof}
The perturbative terms, $y_n(z)$, of a sequence are related to the germ of the Borel transform by  \eqref{eq:parametricboreltransform}. We compare the leading-term with $n \to \infty$ in \eqref{eq:ffactpow_1} of \cref{lemma:hankelcoefficients}, and the leading-term with $\ep \to 0$ in \eqref{eq:If_Lemma} of \cref{lemma:hankel}. Hence, from the late terms in \eqref{eq:cor_late}, the Stokes switching \eqref{eq:cor_expswitch} follows by sending $1/n$ to $ \ep$, then $\chi(z)$ to $\e^{\chi(z)}$ via the conformal map of \cref{lemma:hankelcoefficients}, and finally noting the appearance of $2\pi\i$ due Cauchy's coefficient integral.
\end{proof}

\begin{remark}
It should be noted that, unlike often used presentations of Berry's Stokes smoothing argument, the above result does not require the specification of a singularly perturbed differential equation (or an analogous problem) yielding $y$---it requires only the late-term specification of the asymptotic sequence; then the assumption of Borel summability gives the exponential switching in \eqref{eq:cor_expswitch}.
\end{remark}

The knowledge of Borel singularities allows us to determine exponentially small terms (and corrections) that enter the trans-series across Stokes lines. We now generalise the above corollary to go beyond power law singularities in the Borel plane and explain the relationship between exponentially small terms and coefficient asymptotics at leading order. 
The key observation is that the Hankel integrals that arise from crossing Stokes lines are associated with \cref{fig:cylinder}(a) while the coefficient integrals are associated with the cylinder of \cref{fig:cylinder}(b). Thus, roughly speaking, the trans-series structure is the same up to this conformal map between the plane and the cylinder. In more detail, we see that crossing Stokes line associated to $w=\chi(z)$ we pick up the following contribution
\begin{equation}\label{eq:hankeleg}
    \int_{\mathcal{H}_\chi} dw \,e^{-w / \epsilon}f(w) 
    = \int_{\mathcal{H}_{\log \chi}} dt\, e^t \,e^{- e^{t}/ \epsilon}f(e^t) \, .
\end{equation}
Now consider changing variable to $t = \log \chi + \frac{\epsilon s}{\chi}$ and expanding to first order in $\epsilon$ to give
\begin{equation}
    \epsilon e^{-\chi /\epsilon} \left( \int_{\mathcal{H}_0} ds \, e^{-s}f(\chi + s \epsilon)  + O(\epsilon) \right)\,.
\end{equation}
On the other hand, consider the coefficient integral from lemma \ref{lemma:hankelcoefficients} which says
\begin{equation}\label{eq:coefficienteg}
    f_n = \frac{1}{2 \pi i}\int_{\mathcal{H}_{\log \chi}} dw \, e^{-nw}f(e^w)\,,
\end{equation}
One already notes the similarities between \eqref{eq:hankeleg} and \eqref{eq:coefficienteg}---one is a Hankel contour integral on the plane and the other on the cylinder. If we consider changing variables to $w = \log \chi + \tilde{s}/n$ and expanding to leading order in $n$ we find
\begin{equation}
    f_n = \frac{1}{2 \pi i}\frac{\chi^{-n}}{n} \int_{\mathcal{H}_0} d\tilde{s} \, e^{-\tilde{s}} f(\chi + \chi \frac{\tilde{s}}{n} + \ldots ) \,,
\end{equation}
note the extra factor of $\chi$ inside $f$ in the coefficient integral in contrast with the Stokes integral. In summary, we see that if a perturbative series (or Borel power series about $w=0$) has the leading asymptotics:
\begin{equation}
    y_{n+1}(z) \sim \frac{\Gamma(n)}{\chi^n}g(n,z) \quad \iff \left(y_B(z)\right)_n  \sim \frac{g(n,z)}{n \chi^n} \,,
\end{equation}
then the following term enters the trans-series across the Stokes' line $l_{\chi}$ to leading order in $\epsilon$
\begin{equation}
    \left[2\pi \i \epsilon g\left(\frac{\chi}{\ep},z\right) \right] \e^{-\chi / \epsilon}.
\end{equation}
We note that one does not need to understand the nature of the Borel singularity (which may be transcendental and difficult to analyse) in order to understand the more general leading-order Stokes switching. In applications, one may consider fitting $g(n,z)$ to a numerical analysis of the late term behaviour of $y_n(z)$ and immediately deduce the exponentially small contributions.

In section \ref{sec:applications} we discuss how the typical factorial-over-power assumption\footnote{The case $g(n,z) = (n/\chi(z))^{\alpha}$} may be violated in simple examples. We consider more general $g(n,z)$ functions and discuss the implication for singularities in the Borel plane. We now conclude this section with a simple concrete example of a parametric resurgent function.

\begin{example}[Stokes lines from parametric trans-series] \label{example:stokesparam}
Consider the parametric holomorphic function,
\begin{equation}
    y_B(w,z) = \frac{1}{w-z}\frac{1}{w-(z^2-1)} \,.
\end{equation}
This has a singular set in the Borel plane given by $\Gamma_w(z) = \{\chi_1(z) = z\,, \,\chi_2(z) = z^2 -1 \}$. We may recover an asymptotic series from $y_B(w,z)$ via the inverse Borel transform, i.e.
\begin{equation}
    y(z;\epsilon) = \int_{(0,\infty)} \de{w} \,\e^{-w/\epsilon} \, y_B(w,z) = \frac{1}{z(z^2-1)} + \frac{z^2 +z-1}{z^2(z^2-1)^2} \epsilon + \ldots
\end{equation}
and we see the singularities in the physical plane, $\mathbb{C}_z$, are given by $\Gamma_z = \{-1,0,1\}$.
The Stokes graph $l = l_1 \cup l_2$ is given by the locus,
\begin{align}
    l_1 &= \{ z: \Im z^2 = 0, \Re z^2 \ge 0\}, \\
    l_2 &= \{ z: \Im (z^2-1) = 0, \Re (z^2-1) \ge 0\},
\end{align}
which we illustrate in figure \ref{fig:stokesexample}. Then, by computing the local Taylor series expansions around $w=\chi_1(z)$ and $w=\chi_2(z)$, we may deduce non-perturbative corrections around the saddle points to arbitrarily high order in $\epsilon$. For example crossing the Stokes line associated to $\chi_1$ gives the contribution
\begin{equation}
    y_{l_1} = \frac{e^{-z^2 / \epsilon}}{-z^2+z-1} \,.
\end{equation}
\begin{figure}[htb]
    \centering
    \includegraphics{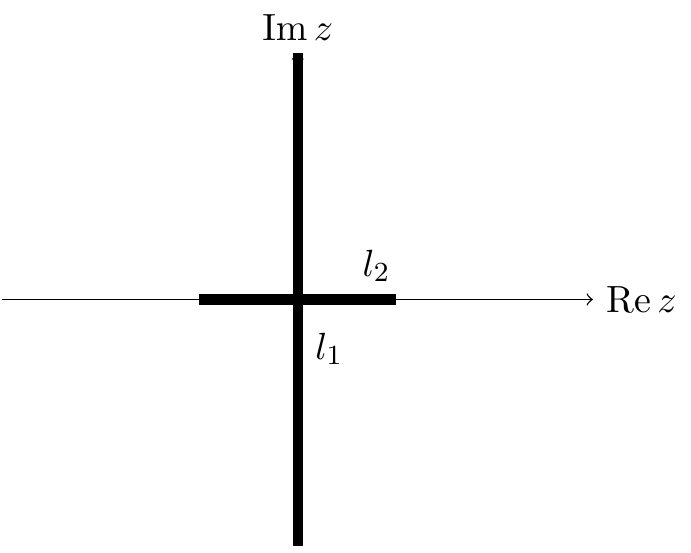}
    \caption{Stokes graph, $l = l_1 \cup l_2$, from \cref{example:stokesparam}.}
    \label{fig:stokesexample}
\end{figure}

\end{example}

\subsection{Boundary layers in parametric trans-series}\label{subsec:boundarylayers}

In contrast to the Borel resummation of constant (non-parametric) trans-series, holomorphic trans-series of the form \eqref{eq:parametricseries} often exhibit the complication of \textit{boundary layers}, or distinguished limits in both $\ep$ and $z$. Informally, when one has a perturbative series with an additional holomorphic parameter, $z$, say $y(z,\epsilon) = y_0(z) + \epsilon y_1(z) + \ldots$, the asymptotic expansion may not only re-order as $n \to \infty$, but may also reorder as $z$ tends to certain critical values. We seek to formalise this notion on the complex-analytic side of the correspondence in \eqref{eq:correspondence}---in brief, we may think of boundary layers as points in $\mathbb{C}_z$ where the radius of convergence of the Borel germ, at $w=0$, shrinks to zero. We begin with a motivating example.

\begin{example}[Boundary layers in the Borel plane]
Consider the parametric function on $\mathbb{C}_w \times \mathbb{C}_z$ given by
\begin{equation} \label{eq:32_yb}
    y_B(w,z) = \frac{1}{w-(z^2-1)}.
\end{equation}
We write the expansion of \eqref{eq:32_yb} around $w=0$, then take the inverse Borel transform to yield the perturbative germ
\begin{equation}\label{eq:boundarylayereg}
    y(z; \,\epsilon) = \epsilon \frac{1}{1-z^2} - \epsilon^2 \frac{1}{(1-z^2)^2} + \ldots.
\end{equation}
From \eqref{eq:32_yb}, note that the Borel singularities are given by
\begin{equation}
    \Gamma_w(z) = \{ w: w = \chi(z) = z^2 -1\} \subset \mathbb{C}_w, 
\end{equation}
and the singularities in the physical plane $\mathbb{C}_z$ of the low order perturbative terms in \eqref{eq:boundarylayereg} are denoted
\begin{equation}
    \Gamma_z = \{ z: z = \pm 1\} \subset \mathbb{C}_z.
\end{equation}
We note that the function $y_B(w,z)$ itself is not singular at the points $\Gamma_z$ for all $w$; rather the pole at $w=\chi(z)$ moves to the origin in $\mathbb{C}_w$ rendering the power series around $w=0$ ill-defined. 

One may approach the point $(w=0,z=z_{\star}) \in \mathbb{C}_w \times \mathbb{C}_z$ along two different directions. First, as above, we may expand the function near $w=0$ and then send $z$ to $z_{\star}$. Alternatively, we may think of $y_B(w,z)$ as a function on $\mathbb{C}_z$ with parametric dependence on $w \in \mathbb{C}_w$ and consider a locally convergent expansion. For example, around $z_{\star}=1$ we have from \eqref{eq:32_yb},
\begin{equation}
    y_B(w,z) = \frac{1}{w} + \frac{2(z-1)}{w^2} + \frac{(4+w)(z-1)^2}{w^3} + \ldots
\end{equation}
We see that at $z_{\star}=1$ we obtain a simple pole at $w=0$. One could not (easily) learn about the nature of this singularity from the expansion \eqref{eq:boundarylayereg}. In \cref{subsubsec:innerouter} we discuss the details of dealing with this phenomena by introducing holomorphic \textit{inner variables}.
\end{example}
Motivated by the above example we define a \textit{boundary layer} of a holomorphic parametric function to be a point $z_{\star} \in \mathbb{C}_z$ where the corresponding $y_B(w,z)$ (thought of as a function on $\mathbb{C}_w$ with parametric dependence on $z \in \mathbb{C}_z$) develops a singularity at $w=0$. We denote this set by $\Gamma_z \subset \mathbb{C}_z$ and throughout the work we assume it is discrete.

We summarise the key ideas from the above discussion. If $y_B(w,z)$ is a parametric resurgent function with a singularity at $w=\chi(z)$ lying on the same sheet as the perturbative germ then, whenever $z_{\star}$ is such that $\chi(z_{\star})=0$, some $y_n(z)$ must be singular at finite $n$.\footnote{Where the singularity appears in the perturbative expansion depends on the nature of the singularity of $y_B(w=0,z)$ at $z=z_{\star}$.} Conversely, suppose that some $y_n(z)$ is singular at a $z=z_{\star}$ then it must be the case that there is a singularity $w=\chi(z)$ on the sheet of $w=0$ that moves to the origin with $\chi(z_{\star})=0$. Hence, boundary layers of the perturbative germ coincide with zeros of $\chi$. In fact, there may be additional  boundary layers $\Gamma^{\chi}_z \subset \mathbb{C}_z$ associated to each $\chi$. The case discussed above, singularities of the perturbative germ $y_n(z)$, defines $\Gamma_z = \Gamma^{0}_z$ but germs around $w=\chi(z) \in \Gamma_w$ may have $\Gamma^{\chi}_z \neq \Gamma_z$---these correspond to singular points of early $a^{\chi}_i(z)$ in the local Borel expansions discussed in the previous section. In this work we focus only on inner-outer matching to the perturbative germ so consider $\Gamma_z$ only.

\begin{remark}
It is an interesting question to understand the complex formalism of how one sets a boundary condition for a differential equation at a boundary layer. It is expected that in this case, one may obtain an additional `constant' resurgent trans-series problems for the coefficients $c_{\chi}$ in the singularly perturbed trans-series:
\begin{equation}
    y(z,\epsilon) = c_{\chi}\sum_{\chi} e^{-\chi(z)/\epsilon}y^{\chi}(z,\epsilon) \,.
\end{equation}
Indeed the work of Howls~\cite{howls2010exponential} finds a similar structure. Here, we mostly avoid this issue by always assuming that the Borel transform, with the perturbative contour $\gamma=(0,\infty)$, satisfies the boundary condition.
\end{remark}

\section{Singular perturbation theory and differential equations} \label{sec:singularperturbation}

The previous two sections dealt with the resurgent properties of asymptotic sequences studied in isolation. We now primarily study sequences generated by singularly perturbed ordinary differential equations. We consider divergent trans-series of the general form
\begin{subequations}
\begin{equation}\label{eq:parametricseries}
    y(z;\epsilon) = y^{0}(z; \epsilon) + \sum_{\chi} e^{-\chi(z)/\epsilon} y^{\chi}(z; \epsilon) \,.
\end{equation}
We refer to the \textit{perturbative germ} or \textit{base series} as 
\begin{equation}\label{eq:baseseries}
y^{0}(z; \ep) \equiv y_0(z) + \epsilon y_1(z) + \epsilon^2 y_2(z) + \ldots + \ep^n y_n(z) + \ldots,
\end{equation}
while we refer to the \textit{fluctuations}\footnote{This terminology is motivated by analogy with theoretical physics. The perturbative evaluation of path integrals organises as a sum over perturbative and non-perturbative sectors. The non-perturbative sectors arise as semi-classical saddle points of the path integral. Perturbative contributions around the saddle points correspond to loop fluctuations in the relevant quantum field.} \textit{around the saddles} ($e^{-\chi(z)/\epsilon}$) as 
\begin{equation} \label{eq:fluctuationseries}
y^{\chi}(z; \ep) \equiv \epsilon^{-\alpha_\chi}\Bigl[y_0^{\chi}(z) + \epsilon y_1^{\chi }+ \ep^2 y_2^{\chi}(z) + \ldots \ep^n y_n^\chi(z) + \ldots \Bigr],
\end{equation}
\end{subequations}
for constant $\alpha_\chi$. The summation in \eqref{eq:parametricseries} is taken over all the $\chi(z)$ associated to singularities, $\Gamma_w$, in the Borel plane. Note that we choose to distinguish the base series, with $\chi=0$, since many of our later examples will involve a solution approximated to leading-order by an algebraic expansion in $\ep$. 

The kind of examples studied in this work are singularly perturbed inhomogeneous ordinary differential equations such as, \textit{e.g.}
\begin{equation}\label{eq:egperturbedequation}
    \epsilon^2 y''(z) + z \epsilon y'(z) + (1-z) y (z) = \frac{1}{z} \,,
\end{equation}
where $y(z)$ decays to zero in appropriate sector(s) of the complex plane. The terminology \textit{singularly perturbed} arises since the solutions to such equations with $\ep \to 0$ are qualitatively different to solutions at $\epsilon=0$. Naive asymptotic expansions to such equations yield perturbative series of the form $y(z; \ep) = y_0(z) + \epsilon y_1(z) + \epsilon^2 y_2(z) + \ldots$ and hence we are concerned with the parametric expansions given by \eqref{eq:parametricseries} rather than the constant-coefficient expansions of the type \eqref{eq:yser}. 

In this section, our goal is to develop the right hand-side of the correspondence \eqref{eq:correspondence} and discuss how problems in singular perturbation theory can be studied via the parametric complex function, $y_B(w;z)$, on the Borel plane. The basis of this approach relies upon the behaviour of $y_B$ near power law singularities, $w = \chi$, where we may write locally
\begin{equation} \label{eq:yB_chiexpand}
    y_B(w,z) = (w-\chi(z))^{-\alpha_{\chi}}(a_0^{\chi}(z) + (w-\chi(z))a_1^{\chi}(z) + \ldots) + \text{reg.}
\end{equation}
Then, given the above knowledge of the local expansion of $y_B(w,z)$, one may apply the inverse Borel transform \eqref{eq:parametricborel_inversion} and determine (\cref{lemma:hankel}) the relationship between the components $\{\alpha_\chi, a_i^{\chi}(z)\}$ and the fluctuations, $y^{\chi}(z)$, in the trans-series expansion \eqref{eq:parametricseries}.

\paragraph{Parametric replacement rules.}
Differential equations of the general form \eqref{eq:egperturbedequation} may be transferred to operators on functions on Borel space using the following straightforward lemma:
\begin{lemma}
In Borel space the operators $d/dz$ and multiplication by $\epsilon$ become
\begin{equation}
    \mathcal{B}_z: \frac{d}{dz} \to \partial_z \quad \text{and} \quad
    \mathcal{B}_z: \epsilon^{-1} \to \partial_w \,.
\end{equation}
\end{lemma}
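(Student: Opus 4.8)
The plan is to prove the two rules separately, exploiting that the parametric Borel transform \eqref{eq:parametricboreltransform} acts only on the $\ep$-coefficients while carrying $z$ along as an inert holomorphic parameter. In effect the claim reduces to the non-parametric manipulations of \cref{lemma:replacementrules} performed fibre-wise over each fixed $z \in \mathbb{C}_z \setminus \Gamma_z$, the new content being the legitimacy of interchanging those operations with the $z$-dependence.

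First I would dispatch the $z$-derivative. Differentiating the naive expansion gives $\frac{d}{dz} y(z;\ep) = \sum_{n\geq 0} \ep^n y_n'(z)$, whose image under $\B_z$ is $\sum_{n\geq 0} \frac{y_{n+1}'(z)}{n!} w^n$; this is exactly $\partial_z y_B(w,z)$ obtained by differentiating the series \eqref{eq:parametricboreltransform} term by term. The mathematical content is therefore the interchange of $\partial_z$ with the infinite Borel sum, and this is where the analytic work lies: one needs the $y_n(z)$ holomorphic on a common neighbourhood together with a \emph{locally uniform} Gevrey-$1$ bound $|y_n(z)| \leq A B^n n!$ for $z$ away from $\Gamma_z$, so that both $y_B$ and its termwise $z$-derivative converge locally uniformly on $\mathbb{C}_w \times (\mathbb{C}_z \setminus \Gamma_z)$ and Weierstrass' theorem licenses differentiation under the summation sign.

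For the multiplication-by-$\ep^{-1}$ rule I would give two complementary derivations and retain whichever is cleaner. The algebraic one is an index shift: writing $\ep^{-1} y = \sum_n \ep^{n-1} y_n(z)$ and applying $\B_z$ to the part of order $\ep^{\geq 1}$ returns $\partial_w y_B(w,z)$, with the $z$-dependence untouched. The analytic one inserts the inverse transform \eqref{eq:parametricborel_inversion}, uses $\tfrac{1}{\ep}\e^{-w/\ep} = -\partial_w \e^{-w/\ep}$, and integrates by parts in $w$: the bulk term reproduces $\int_0^\infty \de{w}\,\e^{-w/\ep}\,\partial_w y_B(w,z)$, identifying $\ep^{-1} \mapsto \partial_w$, while the endpoint value $y_B(0,z)$ and the constant $y_0(z)$ feed only the non-resurgent ($\delta$-function) part of the transform.

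The main obstacle is thus not the formal algebra but the careful accounting that accompanies it. On the analytic side one must secure the uniform-in-$z$ convergence needed for the Weierstrass interchange, and explicitly restrict to $\mathbb{C}_z \setminus \Gamma_z$: at a boundary layer $z_\star \in \Gamma_z$ the radius of convergence of the germ at $w=0$ collapses (cf. \cref{subsec:boundarylayers}), so the term-by-term manipulations fail and the rules hold only away from these points. On the bookkeeping side one must separate off the two boundary contributions---the constant $y_0(z)$, sent to $y_0(z)\delta$ by the extended transform, and the $w=0$ endpoint $y_B(0,z)$ thrown off by the integration by parts---since isolating precisely these is what pins down both the sign and the resurgent (non-$\delta$) part for which the clean statement $\ep^{-1} \mapsto \partial_w$ holds.
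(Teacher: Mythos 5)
Your proof is correct and takes essentially the paper's route: the paper's entire proof is the one-line observation that the rules follow from the definition of the inverse Borel transform \eqref{eq:parametricborel_inversion}, which is exactly your differentiation-under-the-integral and integration-by-parts derivation, with the correct (positive) sign for $\epsilon^{-1}\mapsto\partial_w$ and with the boundary contributions $y_0(z)$ and $y_B(0,z)$ correctly relegated to the $\delta$-function part. The series-level index-shift derivation and the locally uniform Gevrey-1/Weierstrass justification you add are supplementary rigour that the paper leaves implicit, but they are consistent with its conventions.
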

\begin{proof}
This is a consequence of the definition of the inverse Borel transform in \eqref{eq:parametricborel_inversion}.
\end{proof}

In this work, we shall primarily focus on the case of singularly perturbed linear inhomogeneous $N$th-order differential equations for $y = y(z;\epsilon)$ given by $\mathscr{P}y = F(z)$. As a result of the above transformation rules, the operator is mapped as follows:
\begin{equation}\label{eq:generalborelPDE}
    \mathscr{P} = \sum_{i=0}^N \epsilon^{i} P_i(z) \frac{d^i}{dz^i} \to \mathscr{P}_B = \sum_{i=0}^N  P_i(z) \partial_z^i \partial_{w}^{N-i} \,.
\end{equation}
\begin{example}[First-order example of $\mathscr{P}$ and $\mathscr{P}_B$] Using the above lemma, we may write the correspondence between the following first-order operator and its Borel analogue:
\begin{equation}
\mathscr{P} = \epsilon \dd{}{z} + G(z) \to 
\mathscr{P}_B = \partial_z + G(z) \partial_w.
\end{equation}
If we solve $\mathscr{P} y = F(z) = \epsilon H(z)$ with an inhomogeneous term, then it follows from the definition \ref{eq:parametricboreltransform} that, in the Borel plane, we are led to solve $\mathscr{P}_B y_B = 0$ together with the `initial data' $y_B(w=0,z) = H(z)/G(z)$. We discuss the straightforward general solution to such equations in more detail in \cref{subsec:firstorder}.
\end{example}

Note that in contrast to \cref{sec:background}, we now study the Borel transform, $y_B$, given by a \textit{partial} differential equation, $\mathscr{P}_By_B = 0$. In the context of the exact WKB analysis of Schr\"odinger equations \cite{voros1983return} the resultant partial differential equations have been studied in great detail. For example, the work of Takei, Aoki, Iwaki \textit{et. al.} \cite{kawai2005algebraic} argue that the singular locations, say $w = \chi$, may be determined by the study of propagation of singularities of \eqref{eq:generalborelPDE}. Further as demonstrated in \cite{aoki2003exact, aoki2008virtual}, the so-called \emph{microlocal property} of such operators explains the type and location of singularities.

\begin{remark}We highlight that the function, $\chi(z)$, that appears throughout this work is referred to as a \textit{singulant}, primarily when it is used in conjunction with exponential switchings, $\e^{-\chi/\ep}$, or alternatively in the context of the late terms, \eqref{eq:latetermsansatz}. The terminology is due to Dingle \cite{dingle1973asymptotic}.
\end{remark}

\subsection{A singularity ansatz in the Borel plane}\label{subsec:ansatz}

In our work, we shall take a slightly different approach and explain the Borel-plane analogue of the factorial-over-power late terms ansatz approaches of applied exponential asymptotics (\textit{e.g.} \cite{chapman_1998_exponential_asymptotics}): we shall posit a singularity ansatz for $y_B$, to be satisfied via its governing partial differential equation. In this case, one can show that the bi-characteristic curves studied in the exact WKB approach yield effective ordinary differential satisfied by the singulant $\chi(z)$. 

Given the base series \eqref{eq:baseseries}, the factorial-over-power approach of applied exponential asymptotics posits the following ansatz for the large $n$ asymptotics
\begin{equation} \label{eq:latetermsansatz_pre}
    y_{n+1}(z) = \frac{n^{\alpha_{\chi}-1} n!}{\chi(z)^{n+\alpha_{\chi}}}\left(b_0^{\chi}(z) + \frac{1}{n} b_1^{\chi}(z)+ \ldots \right) \qquad \text{as $n \to \infty$}.
\end{equation}
It is also common to write the above in the equivalent form, 
\begin{equation}\label{eq:latetermsansatz}
    y_{n+1}(z) = \frac{\Gamma(n+\alpha_{\chi})}{\chi(z)^{n+\alpha_{\chi}}}\left(a_0^{\chi}(z) + \frac{1}{n+\alpha_{\chi}-1} a_1^{\chi}(z) +\ldots \right) \qquad \text{as $n \to \infty$}.
\end{equation}
The relationship between the two above expansions was previously discussed near equations \eqref{eq:ffactpow_1} and \eqref{eq:ffactpow}---there we demonstrated that the same conclusions about the exponential trans-series components can be drawn from either expression. Instead of the above, we may study the problem in the Borel plane, where we use assumption that $y_B$ exhibits a power law singularity at $w = \chi$. Hence we posit the ansatz
\begin{equation}\label{eq:ansatz}
    y_B(w,z) = (w-\chi(z))^{-\alpha_\chi}(a_0^\chi(z) + a_1^\chi(z)(w-\chi(z)) + \ldots) \,,
\end{equation}
and then seek to solve for the components, $\chi$, $\alpha_\chi$, and $a_i^\chi$ and consequently derive the trans-series \eqref{eq:parametricseries}.\footnote{Note the functions $a_i^\chi$ in \eqref{eq:latetermsansatz} and \eqref{eq:ansatz} differ by numerical factors, but for ease of presentation, we shall sometimes denote them as the same. Lemmas \ref{lemma:hankel} and \ref{lemma:hankelcoefficients} present the correspondences between the two.} We demonstrate the Borel-plane ansatz by example.

\begin{example}[Borel-plane ansatz for a second-order ODE] 
\label{example_2ndorderkey}
Key illustrative examples in this work will be singularly perturbed inhomogeneous second-order linear ODEs of the form
\begin{equation} \label{eq:secondorderODE}
    \mathscr{P}y = \epsilon^2 y''(z) + \epsilon P(z) y'(z) + Q(z) y(z) = F(z) \,,
\end{equation}
with $P(z)$, $Q(z)$ and $F(z)$ meromorphic functions on $\mathbb{C}_z$. For concreteness, we assume that the boundary conditions for $y$ are specified by the inverse Borel transform with $\gamma_0 = (0,\infty)$. However, note that the boundary conditions are not relevant to the following analysis in determining the Stokes lines and associated Stokes jumps. Expanding $y = y_0 + \ep y_1 + \ldots$, we obtain from \eqref{eq:secondorderODE} the first two orders,
\begin{equation} \label{eq:2ndorder:y0y1}
    y_0(z) = \frac{F(z)}{Q(z)} \quad \text{and} \quad y_1(z) = - \frac{P(z)}{Q(z)}y_0'(z) \,,
\end{equation}
whose singular boundary layer set, $\Gamma_z$, on $\mathbb{C}_z$ consists of poles of $F(z)$ and $P(z)$, and zeroes of $Q(z)$. In Borel space, $\mathbb{C}_w$, the relevant operator is
\begin{equation} \label{eq:PB_2ndorder}
    \mathscr{P}_B = \partial^2_z + P(z)\partial_z \partial_w + Q(z) \partial^2_w \,.
\end{equation}
Now we consider a singularity ansatz for $y_B$ of the form of \eqref{eq:ansatz}. 
Seeking a solution to $\mathscr{P}_B y_B=0$ at leading order with $w$ near $\chi(z)$, we obtain an algebraic equation for $\chi'(z)$:
\begin{equation} \label{eq:chipeqn_example}
    \left(\chi'\right)^2 - P(z) \chi' + Q(z) = 0 \,,
\end{equation} 
and hence two first-order differential equations for the singulant, $\chi(z)$. Note that the multi-valued nature of $\chi(z)$ may be viewed as a result of treating $\mathbb{C}_w$ and $\mathbb{C}_z$ differently---this is in contrast to the singularity propagation approach popular in exact WKB analysis.

To proceed, it is convenient to make the affine transformation $t = w - \chi(z)$ to move the singularity to the origin. In this new variable, we obtain a new effective Borel PDE centred on $\chi(z)$ given by
\begin{equation}\label{eq:shiftedborel}
    \mathscr{P}_B = (P-2\chi'(z))\partial_z \partial_t - \chi''(z) \partial_t + \partial^2_z \,.
\end{equation}
Using \eqref{eq:chipeqn_example} and comparing orders in $t$ of $\mathscr{P}_B y_B = 0$ yields a set of recurrence relations for the coefficients, $a_n^\chi(z) = a_n(z)$, with $n=0, 1, 2, \ldots$ given by
\begin{subequations} \label{eq:a0an_gen}
\begin{gather}
    (P(z)-2\chi'(z))a_0'(z) - \chi''(z) a_0(z) = 0 \,,  \label{eq:a0_ode} \\
    (n-\alpha)a'_n(z) (P(z)-2 \chi'(z)) + a''_{n-1}(z) -\chi''(z)(n-\alpha) a_n(z) = 0 \,,  \quad \text{for $n > 0$}. \label{eq:secondorderrecursive}
\end{gather}
\end{subequations}
Note that there is an apparent absence of initial conditions for the differential equations satisfied by the $a_n(z)$, i.e. \eqref{eq:a0_ode} and  \eqref{eq:secondorderrecursive}. This is an important observation and, in the following section, we will find initial conditions at singular points $z_\star\in\Gamma_z$ using an inner-outer matching procedure in the complex plane.

As a further example, note that if $P$ and $Q$ in \eqref{eq:a0_ode} are now holomorphic on $\mathbb{C}$ then by the linearity of the differential equation, $a_0$ has possible singular points only at points $z \in \mathbb{C}_z$ where $P(z) = 2 \chi'(z)$. These singularities will, in general, be distinct\footnote{Indeed, as previously remarked, the singularities of low-order $a_i(z)$ determine a new set of physical boundary layers $\Gamma^{\chi}_z$ for the local expansion at $w=\chi(z)$.}  from those in $\Gamma_z$. 
\end{example}

\begin{example} \label{eg:appendixeg}
Consider \eqref{eq:secondorderODE} with $P=2$ and $Q = 1-z$ so that the singularity locations satisfy $\chi_{\pm}' = 1 \pm z^{1/2}$ from \eqref{eq:chipeqn_example}.
Assuming $\alpha$ is not an integer (which depends on the choice of $F$), we find from \eqref{eq:secondorderrecursive}, in the case of $\chi = \chi_+$: 
\begin{equation} \label{eq:2ndorder_a0a1}
    a_0(z) = a_0^{\chi_+}(z) = \frac{c_0}{z^{1/4 }} \quad \text{and} \quad
    a_1(z) = a_1^{\chi_+}(z) = \frac{c_1}{z^{1/4}} + \frac{5c_0}{48(\alpha-1)z^{7/4}},
\end{equation}
for (Stokes) constants $c_0$, $c_1$, and so forth. In accordance with the discussion in \cref{example_2ndorderkey}, the coefficients in \eqref{eq:2ndorder_a0a1} are singular at the origin in $\mathbb{C}_z$ only. The singularities define a new set (in this case one) of boundary layers in the physical plane $\Gamma^{\chi}_z=\{0\} \subset \mathbb{C}_z$ associated to $\chi_+ \in \Gamma_w(z)$.
\end{example}

\begin{example}
Let us consider again the general second order singularly perturbed linear ODE
\begin{equation}
    \mathscr{P}y = \epsilon^2 y'' + \epsilon P(z)y' + Q(z) y \,,
\end{equation}
with associated linear Borel operator
\begin{equation}
    \mathscr{P}_B y_B = \partial_z^2 + P(z)\partial_z \partial_w + Q(z)\partial_w^2 \,.
\end{equation}
We now explain precisely how the singularity ansatz (or equivalently the factorial-over-power ansatz) method for obtaining the singulant $\chi(z)$ is related to the exact WKB bicharacteristic method. It is argued in \cite{aoki1993microlocal} that singularities of $\mathscr{P}_B$ propagate along bicharacteristics in $T^* (\mathbb{C}_z \times \mathbb{C}_w)$. These are curves defined by Hamiltonian flow (with respect to the natural complex symplectic structure on the cotangent space) defined by the symbol $\sigma$ of $\mathscr{P}_B$. Introducing coordinates $(z,w)$ for $\mathbb{C}_z\times \mathbb{C}_w$ and $(\xi,\eta)$ for the respective cotangent directions then we see that in the second order example the symbol is given by
\begin{equation}
    \sigma = \xi^2 + P(z)\xi \eta + Q(z) \eta^2 \,,
\end{equation}
and the associated Hamilton equations are given by
\begin{align}
    &\dot{z} = \partial_{\xi}\sigma = 2 \xi +P(z) \eta\,, \quad &\dot{w} = \partial_\eta \sigma = P(z)\xi + 2 Q(z) \eta \,, \\
    &\dot{\xi} = - \partial_z \sigma = P'(z)\xi \eta + Q'(z) \eta^2 \,,\quad &\dot{\eta} = - \partial_w \sigma = 0 \,,
\end{align}    
together with the `energy' condition $\sigma=0$. In the above equations the dot superscript denotes differentiation with respect to a `time' parameter $t \in \mathbb{C}$ so that solutions are parametric curves in $T^*( \mathbb{C}_z \times \mathbb{C}_w)$. If instead we seek the projection of solutions to $\mathbb{C}_z \times \mathbb{C}_w$ in terms of $w = \chi(z)$ then we find
\begin{equation}
    \chi'(z) = \frac{dw}{dz} = \frac{\dot{w}}{\dot{z}} = \frac{P(z)\xi + 2 Q(z)}{2 \xi + P(z)} \,,
\end{equation}
where we have used the fact that we are free to set $\eta = 1$. Using the relation $\sigma=0$ it is straightforward to verify
\begin{equation}
    \chi'(z)^2 - P(z) \chi'(z) + Q(z) = 0\,,
\end{equation}
and we recover the singulant equation \eqref{eq:chipeqn_example}.
\end{example}

\subsubsection{Discussion}
To review, the coefficients $a_i^\chi(z)$ which appear in the singularity ansatz of $y_B$ in \eqref{eq:ansatz}, may be viewed from three perspectives.
\begin{enumerate}
\item By definition, they represent the series coefficients when the Borel transform, $y_B$, is expanded about singularities, $w = \chi(z)$, in $\Gamma_w$. 
\item The $a_i^{\chi}(z)$ appear in the $1/n$ expansion of the base series \eqref{eq:baseseries} coefficients $y_n(z)$. See \eqref{eq:latetermsansatz}.
\item By Borel inversion and the Hankel contour formula~\eqref{cor:stokesswitching}, the coefficients $a_i^\chi$ are directly related to the trans-series for the saddle fluctuations, $y^\chi$, in \eqref{eq:fluctuationseries}. 
\end{enumerate}
These are straightforward consequences of the definition of the Borel transform and the argument of section \ref{sec:background} that relates the $1/n$ trans-series of the late terms to the corresponding $\epsilon$ trans-series.
Note that this structure persists to `higher singularities' in the sense that the same conclusions may be drawn from the re-scaled expansion around a singularity $\chi(z)$ (that is, consider the $a_i^{\chi}(z)$ as defining a new Borel germ) and the next nearest singularity $\tilde{\chi}(z)$. That is to say, the late terms of $y^{\chi}(z)$ may again be factorially divergent. We conclude with some remarks.

\begin{remark}[Connection of $y_B$ singularity ansatz and trans-series fluctuations $y^\chi$]
According to the discussion at the beginning of \cref{subsec:ansatz}, recall that the recurrence relation \eqref{eq:secondorderrecursive} may be viewed in two main ways: either as the recurrence relation of the $1/n$ expansion of the late terms of $y_n(z)$, or as the differential-recurrence relation satisfied by the higher saddle perturbative series $y^{\chi}(z,\epsilon)$. Indeed, if one substitutes 
\begin{equation}
    y_B(t + \chi(z), z) = t^{-\alpha_{\chi}}y_B^{\chi}(t,z) \,,
\end{equation}
into the Borel PDE in the form \eqref{eq:shiftedborel} then the effective equation obtained for $y_B^{\chi}$ is the Borel transform of the effective equation for $y^{\chi}(z,\epsilon)$, obtained by making a trans-series ansatz of the form \eqref{eq:parametricseries}.
\end{remark}

\begin{remark}
Earlier, we discussed some of the connections between the work here and the exact WKB analysis of \textit{e.g.}~\cite{aoki2003exact,kawai2005algebraic}. For linear problems the bicharacteristic flow coincides with the equation $w=\chi(z)$ for the location of the singularity. However the ansatz we present (which parallels the factorial-over-power late terms ansatz), though non-rigorous,\footnote{For example it is unclear that our ansatz really has the required error bounds in general.} allows for a more general determination of the curves in $\mathbb{C}_z \times \mathbb{C}_w$ along which $w= \chi(z)$ propagates without relying, necessarily, on a linear $\mathscr{P}_B$ operator. In particular, for non-linear problems, whilst different sectors of the trans-series \eqref{eq:fn_trans} may be coupled, it is still possible to obtain an effective equation for $\chi(z)$---indeed this is the spirit of the nonlinear ODE analyses of exponential asymptotics \cite{chapman_1998_exponential_asymptotics}. 
\end{remark}

In the remainder of this section we will explain precisely how the typical methodology to determine the functions in the factorial-over-power ansatz may be mapped to the Borel plane. The main idea is as follows. If the perturbative series, $y^{0}(z)$ or $y^\chi(z)$ in \eqref{eq:parametricsequence} arise as solutions to a singularly perturbed differential equation, then many of the features of the trans-series are fixed by the operator form of $\mathscr{P}$ (or $\mathscr{P}_B$). As we have seen via \cref{example_2ndorderkey}, $\mathscr{P}_B$ imposes certain ordinary differential equations on the components $\chi(z)$ and the associated $a_i^{\chi}(z)$ in \eqref{eq:yB_chiexpand}. The initial conditions of these ODEs are not known \textit{a priori}. Thus, we may interpret $\mathscr{P}$ or $\mathscr{P}_B$ as yielding `blank template' trans-series that must now be populated with constants or initial data. The late-terms ansatz methods of \textit{e.g.}~Chapman \textit{et al.}~\cite{chapman_1998_exponential_asymptotics}, Kruskal and Segur~\cite{kruskal1991asymptotics} explains how these (essentially Stokes) constants may be fixed using methods of matched asymptotics. 

One of the primary goals of the present work is to explain the above procedure via  the complex-analytic side of the correspondence in \eqref{eq:correspondence}. 
In short, we demonstrate that the inner equation is a $z$-independent connection problem of the type studied in section \ref{sec:background}---the trans-series solution of which determines the required initial data at boundary layers $\Gamma_z$ for the $z$-dependent problem.

\subsection{Matching to the origin in the Borel plane} \label{sec:matchingzero}

The next part of the ansatz procedure is to identify the initial data involved in determining $\chi(z)$ (see \emph{e.g.}~\eqref{eq:chipeqn_example}) and also the constant $\alpha$, which appears in \eqref{eq:ansatz} and determines the nature of the singularity at $w = \chi(z)$. In the applied exponential asymptotics 
literature, these steps are often associated with ensuring that the ansatz \eqref{eq:ansatz} is `consistent with the low-order perturbative expansion'.\footnote{In the general case, this is only a heuristic that one may check in a variety of examples, perhaps \textit{a prosteriori} with numerical analysis of the problem. In particular, e.g. for non-linear problems, it is not at all clear that the solution found by the ansatz is unique. Of course our approach suffers from the same difficulties but phrasing the problem in the Borel plane at least leads the way to more rigorous approaches.}

Let us suppose that $w=\chi(z)$ lies on the same branch as our perturbative germ at $w=0$. From the general arguments of section \ref{subsec:boundarylayers} we know that $\chi(z)$ must be zero at singularities of the low-order terms $y_1(z),y_2(z),\ldots$, this fixes initial conditions for $\chi(z)$. Namely, let us consider a singulant, $\chi$, with
\begin{equation}
    \chi(z_{\star}) = 0  \quad \text{at} \quad z_{\star} \in \Gamma_z \,.
\end{equation}

We now work in a neighbourhood of a point $z_{\star} \in \Gamma_z$ so that $\chi(z)$ becomes the nearest singularity to $w=0$. The setup for this subsection is illustrated in figure \ref{fig:originmatch}. 

\begin{figure}[htb]
    \centering
    \includegraphics{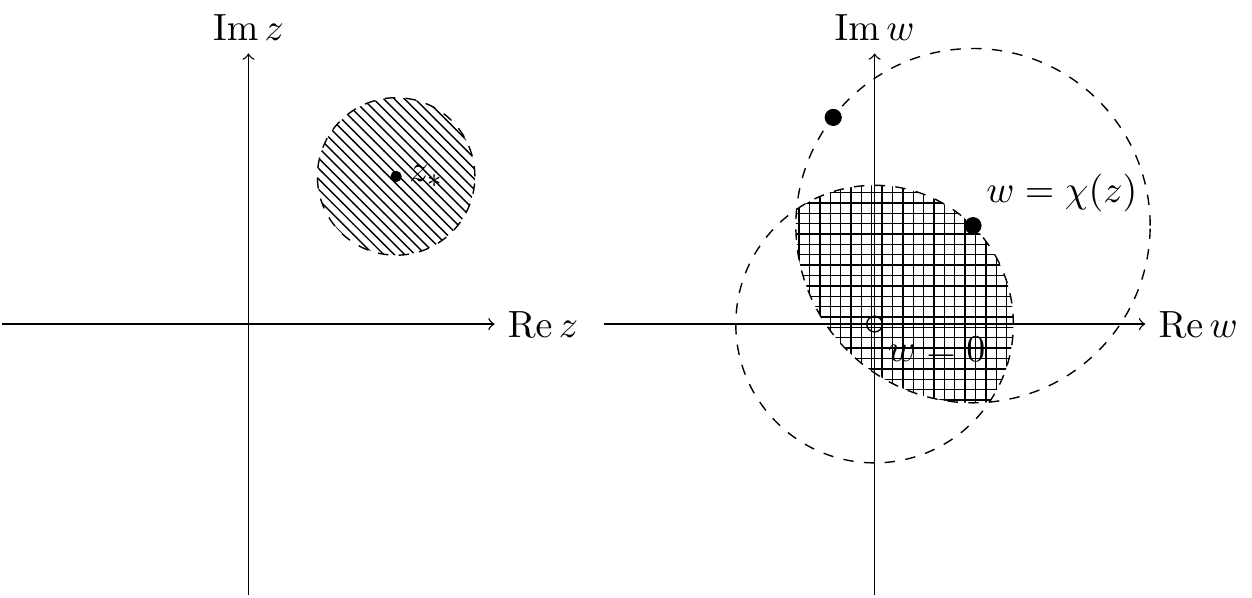}
    \caption{(Left) A singularity, $z^*\in\Gamma_z$, of the early perturbative terms $\{ y_0(z),y_1(z), \ldots \}$. The region of convergence is shaded. (Right) Comparing expansion near $w=\chi(z)$ with the expansion near $w=0$. The shaded region corresponds to where matching can occur.}
    \label{fig:originmatch}
\end{figure}

Recall that the local expansion of $y_B(w, z)$ with a power law singularity, $w = \chi(z)$, takes the form \eqref{eq:ansatz}. In accordance with the arguments of section \ref{subsec:boundarylayers} we may consider $y_B$ at $w=0$ and compare with the perturbative solution near the origin
\begin{equation}\label{eq:loworders}
    y_B(w=0,z) = (-\chi(z))^{-\alpha}a_0(z) + \ldots = y_1(z) \,,
\end{equation}
the power law singularity ansatz in the Borel plane is then consistent if we may expand locally about the point $z_{\star} \in \Gamma_z$ to determine $\alpha$. This is a basic assumption of the applied exponential asymptotics literature where the above expansion is interpreted as ensuring that the late-orders ansatz in \eqref{eq:latetermsansatz} at large $n \to \infty$ is consistent with early asymptotic orders. In particular, setting $n=0$ in the late terms ansatz $\eqref{eq:latetermsansatz}$ we find 
\begin{equation}
    y_1(z) = (-\chi(z))^{-\alpha}a_0(z) + \ldots
\end{equation}
which clearly gives an equivalent consistency condition to \eqref{eq:loworders}. However in the present work this constraint is interpreted as a patching condition for local holomorphic expansions in the Borel plane.

\begin{example}\label{ex:localzero}
Suppose $\chi(z)$ has a zero of order $\gamma$ at a point $z_{\star} \in \Gamma_z$. Let us then expand
\begin{subequations} \label{eq:local_chi_y1_ai}
\begin{align}
    \chi(z) &= X_1 (z - z_\star)^\gamma + \ldots \\
    y_1(z) &= d_1 (z - z_\star)^{-\delta} + \ldots \\
    a_{i}(z) &= a_i(z-z_{\star})^{\beta_i} + \ldots 
\end{align}
\end{subequations}
for the singulant, leading perturbative term, and saddle coefficient in \eqref{eq:ansatz}, respectively. Above, $X_1$, $d_1$, and $a_i$ are non-zero constants. Note, for example, from the discussion on second-order differential equations above that when $P$ is constant, then $a_i(z)$ for $i=1,2,\ldots$ are non-singular at roots of $\chi(z)$ and so $\beta_i=0$.\footnote{In general one needs a singular differential operator acting here to ensure that $\beta$ does not increase with $i$---the procedure is not necessarily consistent for an arbitrary trans-series.} Now, comparing the singularity expansion of $y_B$ to the perturbative germ, we find
\begin{equation}
    y_B(0,z) = c(z-z_\star)^{-\alpha \gamma + \beta} + \ldots = d_1 (z-z_\star)^{-\delta} +\ldots \,,
\end{equation}
and hence we find that $\alpha = (\beta + \delta)/\gamma$. In general, the constant prefactor, $c$, may acquire infinitely many contributions from the $a_{i}$ constants in the saddle fluctuations, and so these remain undetermined.
\end{example}

\begin{example}\label{eg:forcedappendixeg}
Let us supplement example~\ref{eg:appendixeg} with an inhomogeneous term
\begin{equation}
    F(z) = \frac{1}{(1-z)(2-z)} \,,
\end{equation}
and seek a solution to $\mathscr{P} y = F$ or equivalently $\mathscr{P}_B y_B= 0$ with the initial data $y_B(w=0,z) = y_1(z)$ in the Borel plane. We have
\begin{equation}
    y_1(z) = \frac{2(-5+3z)}{(-2+z)^2(-1+z)^4} \,.
\end{equation}    
We focus on the singularity $z_\star = 1$ and the corresponding singulant, $\chi_+=z+z^{3/2}-2$. From \eqref{eq:local_chi_y1_ai} and \eqref{eq:2ndorder_a0a1}, we find $\gamma = 1$, $\delta = 4$, and $\beta = 0$. Hence via example~\ref{ex:localzero}, we thus find $\alpha= (\beta + \delta)/\gamma = 4$, and therefore an order $4$ pole at $w = \chi_+$ in the Borel plane. Although it was the case here, note that the nature of physical-plane and Borel-plane singularities need not be correlated in general.
\end{example}

\begin{example}
We may generalise the singulant and matching to $N$th-order inhomogeneous linear differential equation, $\mathscr{P}y = F(z)$, with the associated Borel operator given by \eqref{eq:generalborelPDE}. Now, instead of \eqref{eq:chipeqn_example}, $\chi'$ satisfies the algebraic equation
\begin{equation}
    \sum_{i=0}^N P_i(z)(\chi')^i = 0 \,.
\end{equation}
The forcing term, $F(z)$, plays a key role in determining the initial perturbative terms, $y_0(z), y_1(z), y_2(z), \ldots$. We see that there are two distinct types of singularities for forced singularly perturbed differential equations. Firstly, those associated to poles $\{z_1,\ldots,z_f\}$ of $F(z)$ (which we assume throughout is meromorphic)--- in that case we obtain multiple singularities satisfying $\chi(z_i) = 0$ for $i=1,\ldots,f$. And secondly, there are those singularities associated to zeros of $P_i(z)$, denoted $\{z_1,\ldots,z_p\}$. In the latter case, $\chi'(z)=0$ at these points and they may be interpreted as points where $\chi^{-1}(z)$ is multivalued. We shall see both these two types of singularities in the first-order example of section~\ref{sec:applications}.
\end{example}

One intriguing complication involves the subtle assumption at the start of this section~\ref{sec:matchingzero}, which was that generic singularity at $w = \chi(z)$ is assumed to lie on the same branch as the perturbative germ at $w =0$. Indeed, there are situations where $\chi(z)$ may not lie on the initial Riemann sheet, and the above argument of directly matching to $w = 0$ is then invalid. The equation studied in examples \ref{eg:appendixeg} and \ref{eg:forcedappendixeg} is studied in the Appendix of the work of Trinh \& Chapman~ \cite{trinh2013new}; there, it is suggested that the equation exhibits `higher-order Stokes phenomena'. In the present context this implies that the series formed from the $a_i(z)$ has a finite radius of convergence---or equivalently the trans-series contribution $y^{\chi}(z;\epsilon)$ has a factorial divergence in its perturbative expansion not already accounted for by singularities on the same sheet. Loosely speaking, this new singularity $\tilde{\chi}$ lives on a higher sheet above $w=0$.

\begin{figure}
    \centering
    \includegraphics[scale=0.15]{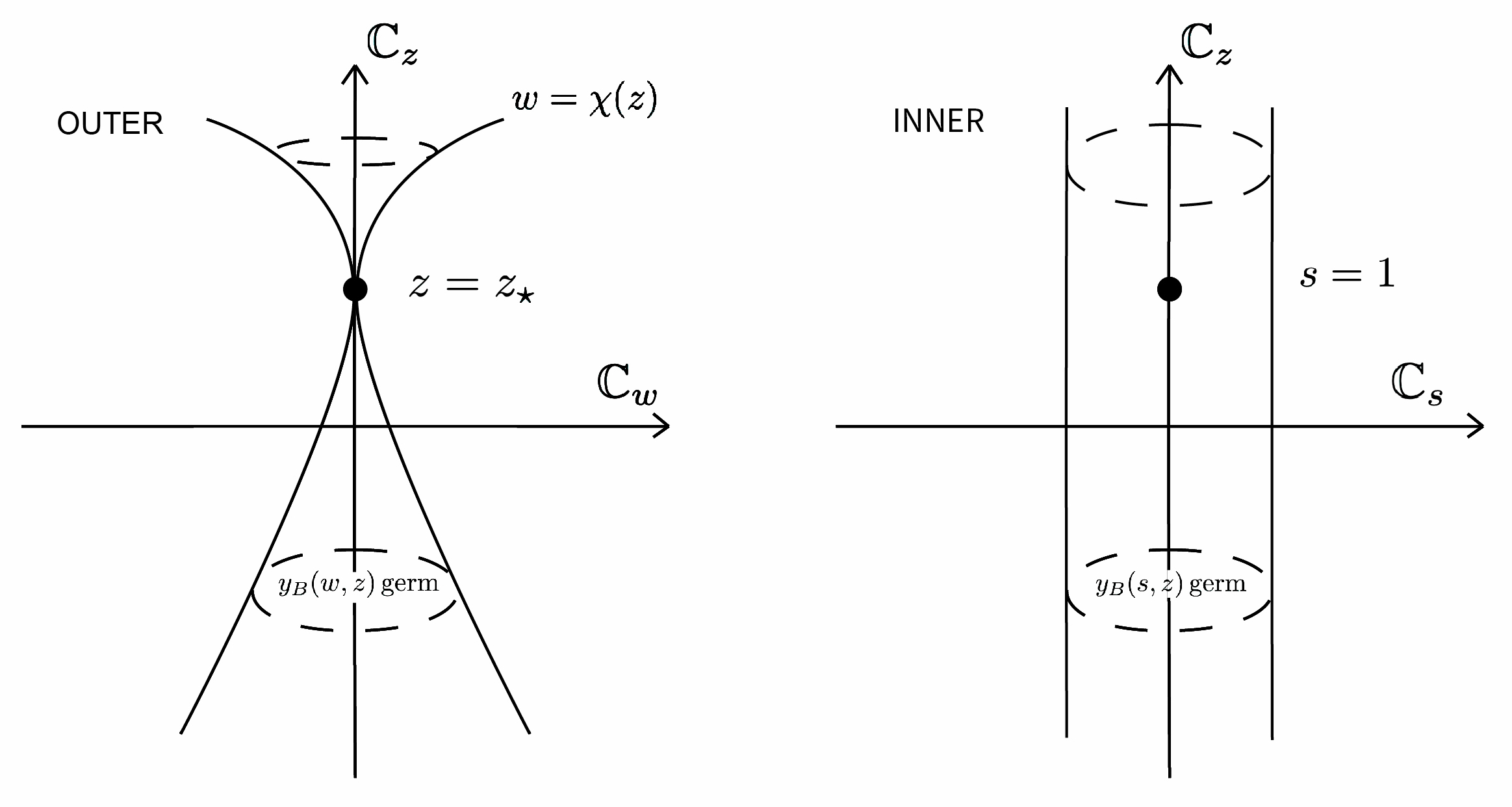}
    \caption{The Borel plane $\times$ the physical plane, shown in the outer (left, $\mathbb{C}_w \times \mathbb{C}_z$) and inner (right, $\mathbb{C}_s \times \mathbb{C}_z$) Borel variables.}
    \label{fig:innerouter}
\end{figure}

\subsection{Inner-outer matching of trans-series}\label{subsubsec:innerouter}
In section~\ref{sec:matchingzero}, we explained a method to determine the location, $w = \chi(z)$, and nature, $\alpha$, of singularities of $y_B(w,z)$. Further, when $y_B(w,z)$ is expanded about $w = \chi(z)$ and the ansatz \eqref{eq:ansatz} is substituted into the Borel PDE, $\mathscr{P}y_B = 0$, we obtain a set of differential equations for the power series coefficients, $a_i^{\chi}(z)$. In this sense we now have a `blank template' trans-series
\begin{equation} \label{eq:transeries_again}
    y(z;\epsilon) = y^{(0)}(z) + \sum_{\chi} e^{-\chi(z)/\epsilon} y^{\chi}(z;\epsilon) \,.
\end{equation}
with the $z$-dependent coefficients in the perturbative expansions of the $y^{\chi}(z,\epsilon)$ essentially coinciding with the $a^{\chi}(z)$ functions in the expansion of $y_B$ about $w = \chi$ (cf. \cref{lemma:hankelcoefficients}).

It turns out that for singularly perturbed differential equations, we may reduce the problem of obtaining initial data for the trans-series components to a `constant' resurgent problem of the type discussed in section \ref{sec:background}. In this section, we focus more on the $\mathbb{C}_z$-plane, which up to this point, has largely played an auxiliary role. Recall that we have a discrete set of distinguished points (with associated boundary layers) $z_{\star} \in \Gamma_z \subset \mathbb{C}_z$ that correspond to singularities of the early terms in the perturbative germ, $y^{(0)}(z)$, in \eqref{eq:transeries_again}---this is where we will set initial data. Expressing $y_B(w,z)$ in a new set of variables that move singularities in $\mathbb{C}_w$ to the unit disk (so that in particular $z$-dependence is lost) reveals a constant resurgent problem of the type discussed in section \ref{sec:background}. We therefore extend the correspondence \eqref{eq:correspondence} to include the method of matched asymptotics on the left hand-side and a version solely in the Borel-plane on the right hand-side.

We note again that it is possible that expansions around $\Gamma_w$ have additional singularities $\tilde{\chi} \in \Gamma^{\chi}_w$ and additional boundary layers $\Gamma^\chi_z$ associated to singularities of low orders of $a_i^{\chi}(z)$ in the same way as above. In this case one must match between $\Gamma^{\chi}_w$ and $\Gamma^{\chi}_z$---and so on \textit{ad infinitum}. The idea is the same but we focus, for notational simplicity, on the `first level' of inner-outer matching in this work and further consider in detail only the sheet of the Borel plane connected to the perturbative germ.

\paragraph{Setup.}
Suppose $y_B(w,z)$ is a parametric resurgent function with singularities at $w= \chi_1(z),\chi_2(z),\ldots$. Pick a particular singularity, $w=\chi(z)$, and let $z_{\star}$ be an element of $\Gamma_z$ with $\chi(z_{\star})=0$. We define a new inner variable
\begin{equation} \label{eq:inners}
    s = \frac{w}{\chi(z)} \,,
\end{equation}
so that $y_B(s,z)$ is singular at the points $s=1$ and also $s = \chi_i(z)/\chi(z)$ for $i\geq 1$. 

By the discussion in section \ref{subsec:boundarylayers}, whenever $z=z_{\star}$ is such that $\chi(z)=0$, then we note the early terms, $y_i(z)$, are singular\footnote{This assumes that $w=\chi(z)$ must lie on the same Riemann sheet as the origin, $w = 0$.} for some $i$ and the asymptotic expansion in $y(z;\epsilon)$ breaks down due to the presence of a boundary layer. The inner variable \eqref{eq:inners} satisfies $s \to \infty$ as $z \to z_\star$. We may now consider expanding our Borel ansatz alternatively as
\begin{equation} \label{eq:innerexpansion_pre}
    y_B(s,z) = (z-z_{\star})^{-\beta}(\varphi_0(s) + (z-z_{\star})\varphi_1(s) + \ldots) \,.
\end{equation}
Without loss of generality, we henceforth assume $z_{\star} = 0$ and write the above as
\begin{equation}\label{eq:innerexpansion}
    y_B(s,z) = z^{-\beta}(\varphi_0(s) + z \varphi_1(s) + \ldots ).
\end{equation}
Note that we may obtain values of $\varphi_i(0),\varphi_i'(0),\varphi_i''(0),\ldots$ by comparing with the low-order perturbative terms. That is, from the expansion of $y_B$ about the origin, we may write
\begin{multline}
    y_0(z) + \bigl[\chi(z) s\bigr] y_1(z) + \bigl[\chi(z) s\bigr]^2 (2!) y_2(z) + \ldots \\
    = z^{-\beta}\Bigl[(\varphi_0(0) + z \varphi_1(0)+z^2 \varphi_2(0) \ldots) 
    + s (\varphi_0'(0) + z \varphi_1'(0)+\ldots) + \ldots\Bigr] \,,
\end{multline}
then expand the early perturbative terms on the left hand-side about the relevant point in $\Gamma_z$ (here $z = 0$). The setup is illustrated in \ref{fig:innerouter} and we now discuss the complex-analytic matching with a simple toy example.

\begin{example}
Consider the function on $\mathbb{C}_w \times \mathbb{C}_z$ given by
\begin{equation} \label{eq:example39_yB}
    y_B(w,z) = \frac{1}{w-\chi_1(z)} + \frac{1}{w-\chi_2(z)} \,,
\end{equation}
where $y_B(w,z)$ has simple poles at $\chi_1(z) = z$ and $\chi_2(z) = z^2 + 1$. We remind the reader that typically the goal is, in some sense, to reconstruct an unknown $y_B(w,z)$ from local data around $w=0$; here we have knowledge of the exact $y_B(w,z)$.  The perturbative data for this problem may be obtained by expanding the Borel transform around $w=0$ 
\begin{equation}
    y_B(w,z) = y_0(z) + w y_1(z) + \ldots 
\end{equation}
from which we may read off the asymptotic series
\begin{equation} \label{eq:example39_perb}
    y(z;\epsilon) = -\left(\frac{1}{z}+\frac{1}{z^2+1}\right) + \epsilon \left(\frac{1}{z^2}+\frac{1}{(z^2+1)^2}\right) + \ldots 
\end{equation}
From the above, we have singular points  $z_{\star} \in \Gamma_z = \{0, \pm i\}$. Crucially, notice that, if we were to examine the analytic expression \eqref{eq:example39_yB}, those points $z_\star$ are largely unremarkable---$w=0$ is simply a poor choice of expansion point near $z_\star$. Let us now define new \textit{inner variables}, 
\begin{equation}
    s_1 = \frac{w}{\chi_1(z)} \quad \text{and} \quad s_2 = \frac{w}{\chi_2(z)} \,.
\end{equation}
These variables send zeros of $\chi_i(z)$ to $s_1, s_2 \to \infty$. Clearly $y_B$ is singular at the points $s_1 = 1$ and $s_2 = 1$. Further, from the discussion in section \ref{subsec:boundarylayers}, we know that when either $s_1 = 0$ or $s_2 = 0$ then $y_B(w,z)$ will be singular at zeros of $\chi_1$ or $\chi_2$. We now compare two dual expansions, thought of as local expansions on $\mathbb{C}_z$ or $\mathbb{C}_s$. We focus on the singularity $\chi_1$ to illustrate the point. First, note that in the new variable $s_1$,
\begin{equation}
    y_B(s_1,z) = \frac{1/z}{s_1-1} + \frac{1/z}{s_1 - \frac{z^2+1}{z}} \,.
\end{equation}
The first expansion, about $s_1=1$, is thus
\begin{multline}
    y_B(s_1,z) = (1-s_1)^{-1}\Bigl[ -\frac{1}{z(z^2+z-1)} + (1-s_1)\frac{1}{(z^2+z-1)^2} \\ + \frac{z}{(z^2+z-1)^2}(1-s_1)^2 + \ldots \Bigr].
\end{multline}
Typically, the task is to obtain the coefficient functions in this expansion, or at least their leading-order (near $\Gamma_z$) constants, since after a simple re-scaling by factors of $\chi$ these will be the sought after coefficients $a_i(z)$ discussed in the previous sections. Let us conceal them:
\begin{equation}
    y_B(s_1,z) = (1-s_1)^{-1} \left( \tilde{a}_0(z) + \tilde{a}_1(z) (1-s_1) + \ldots \right)
\end{equation}
Alternatively we may expand around a zero of $\chi_1(z)$ (in this case $z=0$) to find
\begin{equation}
    y_B(s_1,z) = \frac{1}{z}\left((1-s_1)^{-1} + z s_1(1-s_1)^{-1} + z^2(1+s_1) + z^3 \frac{s_1(2-s_1^2)}{s_1-1} + \ldots \right) \,.
\end{equation}
Again, the coefficients here are typically unknowns, so let us write
\begin{equation}
    y_B(s_1,z) = \frac{1}{z}\left(\varphi_0(s_1) + z \varphi_1(s_1) + z^2 \varphi_2(s_1) + z^3 \varphi_3(s_1) + \ldots \right)
\end{equation}
Now the key point, as in the main body above, is that expanding at $s_1 = 0$ allows us to determine `initial conditions' in each $\varphi_i(s)$ by similarly expanding the perturbative terms. Focusing on $\varphi_0(s)$ we find
\begin{equation}
\begin{split}
    &\varphi_0(0) = -1 \,, \quad \varphi_0'(0) = 1 \\
    &\varphi_1(0) = -1 \,, \quad \varphi_1''(0) = 0 \,.
\end{split}    
\end{equation}
Suppose further that $\varphi_0(s),\varphi_1(s),\ldots$ were known to solve an ODE (this is what we see happens if $y_B(w,z)$ is the resurgent function associated to a singularly perturbed ODE) then we may obtain the coefficients of the $\tilde{a}_i(z)$ as expansions about $z=z_{\star}$. We explain this procedure in the ODE case in more detail in the following.
\end{example}

\begin{example}[An example with nested boundary layers]
There is a subtle assumption in the above discussion whereby, for a given $\chi(z)$ with $\chi(z^*) = 0$ for  $z_{\star} \in \Gamma_z$, it was possible to obtain initial data for the inner $\varphi_i(s)$ at $s=0$. It is possible to construct examples where this is not the case, and the inner-outer matching procedure becomes more subtle.

Consider, for example, three poles in the Borel plane at $w = \chi_i$ with $\chi_1(z) = z$, $\chi_2(z) = z^2$ and $\chi_3(z)=z^3$, together with a perturbative germ, $y = y_0(z) + \ep y_1(z) + \ep^2 y_2(z) + \ldots$. Notice that all three singularities coalesce to the origin, $w = 0$, as $z\to0$. We then re-scale according to the inner variable, 
\begin{equation}
    s_i = \frac{w}{\chi_i(z)} \,,
\end{equation}
for some choice of $i$. For the chosen $i$, the corresponding singularity will be at $s_i=1$ of the inner $\mathbb{C}_{s_i}$ plane, while the other two will move to either zero or infinity (or both) as $z \to 0$. This configuration is illustrated in figure \ref{fig:3eg}.

The complication is that if one were to work, for example, in the inner variable $s_1$, then the inner equation for $\varphi(s_1)$ will have a singular point at $s_1=0$, and we cannot determine the initial data from the perturbative germ in the way described above. The resolution is to first work with the inner variable, $s_3$, so that the singularities associated to $\chi_2$ and $\chi_3$ move to infinity of the $\mathbb{C}_{s_3}$ plane. We may then determine the solution near $w=\chi_3(z)$ since now, initial data for $\varphi(s_3)$ is determined by the perturbative germ at $s_3=0$. We may then consider the inner expansion for $\chi_2$ where now the initial data for the $\varphi(s_2)$ equations is determined from the singularity expansion associated to $\chi_3$---this singularity now moves to the origin in the $s_2$ variable. In this way, the inner-outer matching procedure may be iterated recursively. In this case one clearly needs to pay careful attention to the higher physical singularities $\Gamma^{\chi}_z$.

When viewed purely from the physical plane, $\mathbb{C}_z$, the application of exponential asymptotics for solving such problems with nested boundary layers can be difficult. Here, the above method is manifest in the Borel plane.
\end{example}

\begin{figure}
    \centering
    \includegraphics[scale=0.15]{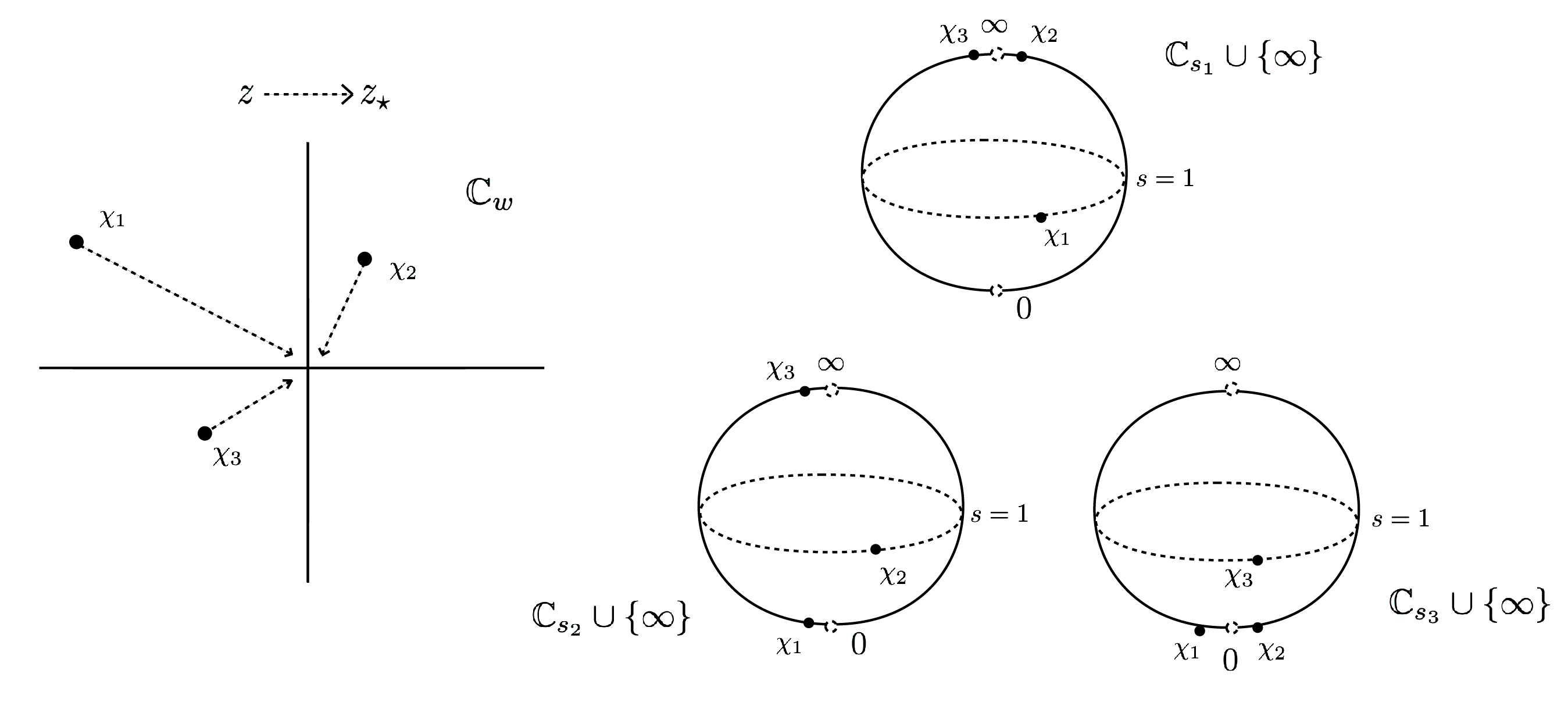}
    \caption{This illustrates an example of three singularities, $\chi_1$, $\chi_2$, and $\chi_3$ sharing a common $z_{\star} \in \Gamma_z$. (Left) As $z \to z_{\star}$ the singularities coalesce at the origin in $\mathbb{C}_w$. (Right) The compactified Borel planes, drawn on the Riemann sphere,  expressed in each Borel inner variable, $s$, are shown near $z_{\star}$.}
    \label{fig:3eg}
\end{figure}

\paragraph{Second-order linear ODEs.}
We now return to the second-order linear ODE of \cref{example_2ndorderkey}. Suppose that $y_B(w,z)$ is annihilated by the Borel operator
\begin{equation}
    \partial^2_z + P(z) \partial_z \partial_w + Q(z) \partial^2_{w},
\end{equation}
with $P$ and $Q$ meromorphic. We are thus studying perturbative solutions of the physical singular operator
\begin{equation}
    \mathscr{P} = \epsilon^2 \frac{d^2}{dz^2} + \epsilon P(z) \frac{d}{dz} + Q(z) \,.
\end{equation}
As explained earlier, inhomogenous terms for the physical operator yield initial data for the Borel PDE. For a given singularity, $\chi \in \Gamma_w(z)$, we may consider changing variables in the Borel PDE to the inner variable $s=w/\chi(z)$. In this variable the various relevant differential operators become:
\begin{equation}
\begin{aligned}
    \partial_z^2 &= s^2 \left( \frac{\chi'}{\chi}\right)^2 \partial^2_s + s \frac{(2(\chi')^2 - \chi'' \chi )}{\chi^2}\partial_s - 2s \frac{\chi'}{\chi} \partial_z \partial_s + \partial^2_z, \\
    \partial^2_w &= \frac{1}{\chi^2} \partial^2_s \,,\\
    \partial_w \partial_z &= - \frac{\chi'}{\chi^2}  \partial_s - s\frac{\chi'}{\chi^2}\partial^2_s + \frac{1}{\chi} \partial_s \partial_z \,.
\end{aligned}    
\end{equation}
The idea here is the following. Without loss of generality, suppose that $\chi(z)$ has an algebraic zero of order $\gamma$ at $z=z_{\star} = 0$ so that we may write locally $\chi(z) = \chi_0 z^{\gamma}$. Now from the fact $(\chi')^2 - P (\chi') + Q = 0$ we have that $P$ and $Q$ must have zeroes of (at most) orders $\gamma-1$ and $2\gamma-2$ respectively and we write them similarly to leading order:
\begin{equation}
    P(z) = P_0 z^{\gamma-1} \,, \quad Q(z) = Q_0 z^{2\gamma-2} \,.
\end{equation}
The consequence is that in the new variables, the Borel PDE is \textit{homogeneous}\footnote{If one assigns a multiplicative weight $\mathsf{w}=1$ to $z$ and $\mathsf{w}=-1$ to $\partial_z$ then every term has weight $\mathsf{w}=-2$.} in factors of $z$ and $\partial_z$. Acting on \eqref{eq:innerexpansion} with the operator in the new variables therefore couples $\varphi_i(s)$, $\varphi'_i(s)$ and $\varphi''_i(s)$ at the \textit{same} order in $z$ after expanding $\chi(z)$ and the coefficients $P(z)$ and $Q(z)$. We call the resulting equations satisfied by the $\varphi_i(s)$ the \textit{Borel inner equations}. For example, to lowest order we find
\begin{equation}
\begin{split}
    \partial_z^2 &= \gamma^2 s^2 z^{-2} \partial^2_s + s z^{-2}\gamma(\gamma+1)\partial_s - 2s \gamma z^{-1} \partial_z \partial_s + \partial^2_z \,, \\
    \partial_w^2 &= \chi_0^{-2} z^{-2\gamma} \partial_s^2 \,, \\
    \partial_w \partial_z &= \chi_0^{-1}(-\gamma z^{-\gamma-1}\partial_s - s \gamma z^{-\gamma-1}\partial^2_s + z^{-\gamma}\partial_s \partial_z) \,. 
\end{split}    
\end{equation}
Acting with these new `local' operators on \eqref{eq:innerexpansion}, we obtain the first inner ODE operator
\begin{equation}\label{eq:firstinner}
    \mathscr{P}_B^{\text{In.},0} = \left(\chi_0^2 \gamma^2 s^2 - \chi_0 P_0 \gamma s + Q_0\right)\frac{d^2}{ds^2} + \chi_0(\chi_0 \gamma (\gamma+1+2 \beta)s- P_0 (\gamma+\beta))\frac{d}{ds} + \chi_0^2 \beta(\beta+1)\,.
\end{equation}
Higher-order inner equations can be obtained similarly: one replaces $\beta \to \beta + k$ for the $k$th inner equation, and replaces $P_0$, $Q_0$ and $\chi_0$ accordingly.
We remind the reader that the inner equation depends on a particular $\chi(z)$ with a particular $z_{\star}$ and an associated $\beta$.
The discussion at the beginning of this section explains how to obtain initial values for this differential equation---namely the initial data may be read off from the perturbative initial values $y_0(z)$ and $y_1(z)$. 
We note that setting $s=1$, we find for the leading coefficient
\begin{equation}
    \chi_0^2 \gamma^2 - \chi_0 P_0 \gamma + Q_0 = 0 \,,
\end{equation}
where this is zero because of the algebraic equation satisfied by $\chi'$. Hence, as expected, the ODE has a leading singular coefficient at $s=1$.

We may verify that the inverse Borel transform (following the rules of lemma \ref{lemma:replacementrules}) of the above equation indeed recovers the more familiar `inner equation' in matched asymptotics posed in $\mathbb{C}_z$. In principle, one could recover the complete expansions of the singularly perturbed trans-series components, $a_i(z)$, by expanding all of the $\varphi_i(s)$ inner solutions. 

\begin{example}
The inner equation for the singularly perturbed first-order ODE $\epsilon y' + y = 1/z$ is given by
\begin{equation}
    (1-s)\varphi_0'(s) - \varphi_0(s) = 0 \,.
\end{equation}
Following the rules of lemma \ref{lemma:replacementrules} one may take the inverse transform to obtain, 
\begin{equation}
    \frac{dY}{dX} + Y = 1/X \,,
\end{equation}
where $Y:=\mathcal{B}^{-1} \varphi_0$ is the governing inner ODE in physical space. This is the familiar physical inner equation obtained by the inner rescaling $X=z/\epsilon$ in the original ODE.
\end{example}

\begin{example}
Consider a singularly perturbed forced harmonic oscillator
\begin{equation}
    \epsilon^2 y''(z) - y(z) = \epsilon^2 F(z) \,.
\end{equation}
In the Borel plane the corresponding operator is the wave equation
\begin{equation}
    \mathscr{P}_B = \partial^2_w - \partial^2_z = (\partial_w - \partial_z)(\partial_z - \partial_w) \,,
\end{equation}
together with the initial data $y_B(w=0,z) = -F(z)$. Since the operator factorises we may find $y_B$ in closed form. This is an interesting toy example that allows us to check a number of aspects of the formalism outlined above. Note however that in this example, the singularities in the Borel plane are doubled, i.e. lie at $w = \pm \chi(z)$, and the resurgence relations of section \ref{sec:background} need to be modified slightly to include functions with multiple power law singularities at the same radius. Such a result is straightforward to derive by studying the asymptotics of the integral in lemma \ref{lemma:hankelcoefficients}.
\end{example}

\subsection{Van-Dyke's matching rule in the Borel plane}
In applied matched asymptotics there is a well-known heuristic known as Van-Dyke's matching rule~\cite{van1975perturbation}. We now discuss how this procedure works by passing to the complex analytic side of \eqref{eq:correspondence}, independently of any singularly perturbed problem from which the asymptotic series arises. In fact, in the present framework, Van-Dyke's matching procedure is a straightforward consequence of the resurgent lemmas \ref{lemma:hankel} and \ref{lemma:hankelcoefficients} expressed in the holomorphic inner variable. 

Suppose that we obtain a local power series solution about $s=0$ of the inner solution $\varphi_0(s)$ in \eqref{eq:innerexpansion_pre}. We write this as
\begin{equation}
    \varphi_0(s) = \varphi_0^{(0)} + \varphi_0^{(1)} s + \varphi_0^{(2)} s^2 + \ldots
\end{equation}
In applications, one often obtains a (constant) recurrence relation for the $\varphi_0^{(n)}$. On the other hand, we may also write our singularity expansion in terms of the inner variable, $s$, giving
\begin{equation}
    y_B(s,z) = (1-s)^{-\alpha}(\tilde{a}_0(z) + (1-s)\tilde{a}_1(z) + (1-s)^2 \tilde{a}_2(z) + \ldots)
\end{equation}
where the set of functions $\tilde{a}(z)$ are related to the functions $a(z)$ in the ansatz \eqref{eq:ansatz} by
\begin{equation}
    \tilde{a}_n(z) = (-\chi(z))^{-\alpha+n}\ a_n(z) \,.
\end{equation}
We know that $y_B(s,z)$ has a singularity at $s=1$, by design, and we may extract the coefficients in the expansion around this singularity from the asymptotics of the coefficients $\varphi_0^{(n)}$. The resurgence relation tells us that if at large $n$ we find
\begin{equation}
    \varphi_0^{(n)} \to  \frac{\Gamma(n+\delta)}{\Gamma(n+1)} \left( C_0 + \frac{1}{n+\delta-1}C_1  +  \ldots \right) \,,
\end{equation}
where in fact $\delta = \alpha$ since this is the order of the singularity at $s=1$ and $C_i$ are constants, then we have
\begin{equation}
    \varphi_0(s) =  (1-s)^{-\delta}(C_0 + C_1(1-s) + \ldots ) 
\end{equation}
In this way, we determine the \textit{leading-order} terms at $z=z_{\star}$ of each $\tilde{a}_i(z)$ for $i=1,2,\ldots$ from the constants $C_i$. Similarly the sub-leading inner solutions $\varphi_i(s)$ determine the sub-leading (in $z_{\star}$) expansions of $\tilde{a}_i(z)$. An illustration of this idea is given in figure \ref{fig:vandyke}. We note that, as explained in this form, Van Dyke's rule is a property of asymptotic sequences with holomorphic $z$ dependence and has nothing to do with differential equations \textit{a priori}. It does, however, provide for us the sought-after initial data for $\tilde{a}_i(z)$ at $\Gamma_z$ in the case of singularly perturbed problems.

\begin{remark}
In practice, if one has a singularly perturbed ODE, it is easier to directly make an ansatz $\varphi_0(s) = (1-s)^{-\alpha}(\varphi_0^{(0)} + (1-s)\varphi_0^{(1)}+\ldots)$ however we present the above result from the perspective of the late terms since this is how one typically proceeds in matched asymptotic problems. 
\end{remark}
We now turn to a simple example.

\begin{figure}
    \centering
    \includegraphics{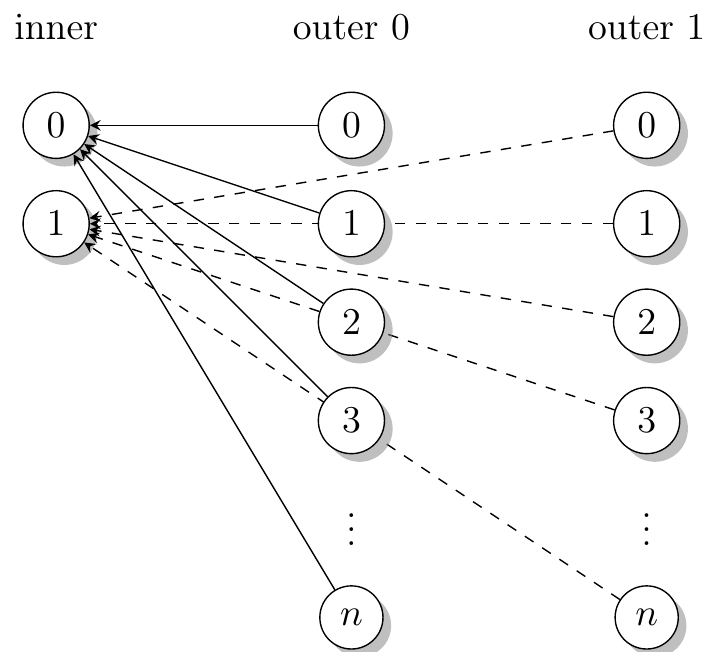}
    \caption{An illustration of Van Dyke's rule. The task is to relate the expansion of $y_B$ about $s = 1$ in $y_B = (1 - s)^{-\alpha}(\tilde{a}_0(z) + (1 - s) \tilde{a}_1(z) + \ldots)$ to the expansion about $z = z^*$ in $y_B = (z - z_*)^{-\beta}(\varphi_0(s) + z \varphi_1(s) + \ldots$. For example, solid lines correspond to the successive late terms of $\varphi_0(s)$ determining the leading terms of $\tilde{a}_i(z)$ in $z$ and dashed lines correspond to the late terms of $\varphi_1(s)$ determining the next-to-leading terms of the $\tilde{a}_i(z)$.}
    \label{fig:vandyke}
\end{figure}

\begin{example}
Consider the parametric resurgent function
\begin{equation}
    y_B(w,z) = \frac{1}{w-z}\frac{1}{w-(z^2+1)} \,,
\end{equation}
and let us study the singularity $\chi(z) = z$. In the inner variable, $s$, associated to $\chi$ we have
\begin{equation}
    y_B(s,z) = \frac{1/z}{s-1}\frac{1/z}{s-(z^2+1)/z} \,.
\end{equation}
In that case we have the first few inner solutions
\begin{equation}
\begin{split}
    \varphi_0(s) = -\frac{1}{1-s} \,,\quad \varphi_1(s) = \frac{s}{1-s}\,,\quad \varphi_2(s) = \frac{1+s^2}{1-s}\,, \ldots
\end{split}    
\end{equation}
The inner solutions have the late-term asymptotics
\begin{equation}
\begin{split}
    \varphi_{0,n} = -1 \,,\quad \varphi_{1,n} = 1\,,\quad \varphi_{2,n} = -2\,.
\end{split}    
\end{equation}
On the other hand, we may expand $\tilde{a}_0$ to give
\begin{equation}
    \tilde{a}_0(z) = -\frac{1}{z} + 1 - 2z + \ldots 
\end{equation}
This demonstrates the holomorphic Van-Dyke rule.
\end{example}

To conclude, if one obtains inner ODEs satisfied by the $\varphi_i(s)$ then one may obtain initial data for the $\tilde{a}_i(z)$ at $\Gamma_z$ by using the holomorphic Van-Dyke method outlined above and all components of the parametric trans-series may be obtained.

\begin{remark}
In the previous subsections, we have presented the ansatz method in parallel with the typical order of study in applied exponential asymptotics~\cite{chapman_1998_exponential_asymptotics} (late-order outer analysis before inner). However, we remark that, arguably, the procedure outlined in the previous two subsections should be the first step one undertakes when faced with a singularly perturbed problem: this is since the inner-outer matching procedure yields initial conditions for the $a_i(z)$ at $\Gamma_z$,  which may then be solved as initial-value problems.
\end{remark}

\section{Examples and discussion}\label{sec:applications}
In this section, we conclude with a worked example and a summary and discussion of the results presented with some proposals for future work.

\subsection{A worked example} \label{sec:workedexample}
We now illustrate how the framework outlined in the previous section may be applied with an example. We design this example so that, for simplicity, the leading singularity in the Borel plane is a pole. Consider the equation [cf. \eqref{eq:secondorderODE}]
\begin{equation}\label{eq:workedexample}
    \mathscr{P}y = \epsilon^2 y''(z) - 3 z \epsilon y'(z) + 2 z^2 y(z) = z \,,
\end{equation}
for which the corresponding operator in Borel space is
\begin{equation}
    \mathscr{P}_B = \partial^2_z - 3z \partial_{z}\partial_w + 2z^2 \partial_w^2 \,.
\end{equation}
Seeking a power series solution to $\mathscr{P}_B y_B = 0$ about $w=0$ of the form $y_B(w,z) = u_0(z) + w u_1(z) + \ldots$ we obtain the recurrence relation, for $n \geq 2$, 
\begin{equation}
    u_{n-2}(z) - 3 z (n-1) u_{n-1}(z) + 2z (n-1)(n-2)u_n(z) = 0 \,.
\end{equation}
As expected, this corresponds (up to the requisite factorial factors) with the perturbative solution to \eqref{eq:workedexample}. The forcing term of the ODE \eqref{eq:workedexample} provides us with a non-trivial power series about $w=0$ with leading terms
\begin{equation}
    u_0(z) = -\frac{3}{4z^3} \,,\quad u_1(z) = \frac{23}{8z^5} \,,\ldots
\end{equation}
We thus see that $\Gamma_z = \{0\}$. We now make the Borel singularity ansatz
\begin{equation}
    y_B(w,z) = (w-\chi(z))^{-\alpha}(a_0(z) + (w-\chi(z))a_1(z) + \ldots) \,,
\end{equation}
from which we learn two consistent singularities
\begin{equation}
    \chi_1(z) = -\frac{z^2}{2} + c_1 \, ,\quad \chi_2(z) = -z^2 + c_2 \,. 
\end{equation}
By the discussion in section \ref{subsec:boundarylayers} (namely initial conditions for $\chi$ are given by setting $\chi(z_{\star}) = 0$ for $z_{\star} \in \Gamma_z$) we further have $c_1=c_2=0$. Let us focus on the closer singularity $\chi_1(z) = -z^2/2$. The singularity ansatz provides us a differential equations for the coefficients, $a_i(z)$, i.e. [cf. \eqref{eq:a0an_gen}],
\begin{subequations}
\begin{align}
    z a_0'(z)-a_0(z) &= 0, \\
    (\alpha-n)(z a_n'(z) + a_n(z)) &= 0, \qquad \text{for $n \geq 1$,} \label{eq:workedexample4}
\end{align}
\end{subequations}
so that either we learn nothing\footnote{This is not a problem since the regular parts of $y_B(w,z)$ do not contribute to the perturbative trans-series expansion.} about $a_n(z)$ in the case $\alpha$ is an integer (that is a pole in the Borel plane) or we learn linear $a_n(z) = a_n z$. In summary, so far we have the local power series expansion
\begin{equation}
    y_B(w,z) = (w+z^2/2)^{-\alpha_1}(a_0(z) + a_1(z) (w+z^2/2) + \ldots ) \, ,
\end{equation}
we now find initial conditions at $z=0 \in \Gamma_z$ for the $a_i(z)$. Comparing to the perturbative germ at $w=0$
\begin{equation}\label{eq:workedexample2}
    y_B(w,z) = - \frac{3}{4z^3} + \frac{23}{8 z^5}w + \ldots
\end{equation}
we learn that $\alpha_1=2$. We now turn to inner-outer matching in order to determine the initial conditions. The inner variable associated to $\chi_1$ is $s_1 = -w/z^2$ and in the notation of section \ref{subsubsec:innerouter} we note the relevant parameters $\gamma = 2$, $P_0 = -3$, $Q_0=2$ and $\beta = 3$. The first inner equation thus reads
\begin{equation}
    (s-1)(s-2)\varphi_0''(s) + \frac{1}{2}(9s -15) \varphi_0'(s) + 3 \varphi_0(s) = 0 \,,
\end{equation}
and equation \eqref{eq:workedexample2} tells us that the initial data is
\begin{equation}
    \varphi_0(0) = -\frac{3}{4} \,, \quad \varphi_0'(0) = \frac{23}{16} \,.
\end{equation}
We seek a power series solution about the singular point $s=1$ and find
\begin{equation}\label{eq:workedexample3}
    \varphi_0(s) = (1-s)^{-2} \left( -\frac{\sqrt{2}}{2} - \frac{\sqrt{2}}{16}(1-s)^2 + \ldots \right) \,,
\end{equation}
Alternatively (and more closely to the typical matched asymptotic analysis) one may seek a solution of the form $\varphi_0(s) = \sum_{n=0}^{\infty}\varphi_{0,n}s^n$ and solving the resultant (numerical) recursion relation to find at leading order\footnote{Further terms may be found by expanding in $1/n$ and using lemma \ref{lemma:hankelcoefficients} to relate late-terms to saddles in the usual way.}
\begin{equation}
    \varphi_{0,n} \to -\frac{\sqrt{2}}{2} n + \ldots  \,.
\end{equation}
The general theory of section \ref{subsec:ansatz} finally tells us that we now have initial data for the $\tilde{a}_0(z)$ and $\tilde{a}_1(z)$, and in turn the $a_0(z)$ and $a_1(z)$ (in this case these are the only coefficients that are not part of the regular part). However, we note the $(1-s)^{-1}$ term of \eqref{eq:workedexample3} is zero which implies that we have to consider higher inner equations in order to determine the initial data for $\tilde{a}_1$. Proceeding, solving the differential equations \eqref{eq:workedexample4} gives $a_0(z) = a_{0,0}z$ we may write the inner expansion as
\begin{equation}
    y_B(s,z) = (1-s)^{-2}(\frac{4}{z^3}a_{0,0} + \frac{2}{z} a_{1,0} + \ldots)
\end{equation}
We deduce from the above that $a_{0,0} = -\frac{\sqrt{2}}{8}$. Now we must consider the third inner equation for $\varphi_2(s)$ in order to determine $a_{1,0}$. Following the method of section \ref{subsubsec:innerouter} this equation reads
\begin{equation}
    (s-1)(s-2)\varphi_2''(s) + \frac{1}{2}(5s-9) \varphi_2'(s) + \frac{1}{2}\varphi_2(s) = 0 \,.
\end{equation}
However, comparing with the perturbative data, we learn that $\varphi_2(0) = \varphi_2'(0)=0$ so that in fact $a_{1,0}=0$. We now have all the components of our local expansion near $\chi_1$:
\begin{equation}
    y_B(w,z) = (w+z^2/2)^{-2}(-\frac{\sqrt{2}}{8} z) + \text{reg.}
\end{equation}
From \cref{subsec:ansatz}, this is sufficient to determine the Stokes phenomena to all orders\footnote{The fact we have a double pole means the asymptotics terminate and there are no further corrections.} in $\epsilon$. Namely, crossing the Stokes' line $l_1$ turns on the term
\begin{equation}
    \frac{2 \pi \im}{\epsilon}\frac{\sqrt{2}z}{8} \e^{-z^2/(2 \epsilon)}  \,.
\end{equation}
The analysis for the second singularity is similar except one finds a branch point rather than a pole in the Borel plane and the fluctuations do not truncate as above. We illustrate the Stokes lines for this problem in \cref{fig:workedexample_stokes}.

\begin{figure}[htb]
    \centering
    \includegraphics{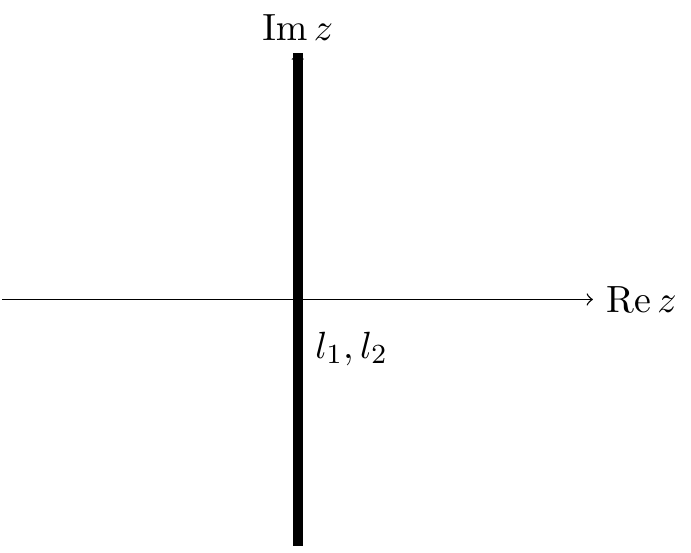}
    \caption{Stokes graph, $l = l_1 \cup l_2$, from \cref{sec:workedexample}.}
    \label{fig:workedexample_stokes}
\end{figure}

\subsubsection{Pad\'{e} approximants for singular problems}
The Borel germ $y_B(w,z)$ has a finite radius of convergence up to $w=\chi(z)$. Pad\'e approximants may be used as a heuristic to study the analytic continuation of the germ of $y_B(w,z)$ from a finite number of perturbative coefficients, and hence allow a useful glimpse into the whole Borel plane. In general, a Pad\'e approximant is a rational approximation to a locally holomorphic function $f(x)$, which may be computed from the first $N$ coefficients about an analytic point. Given $f(x) = a_0 + a_1 x + \ldots a_{2N} x^{2N}$ we solve the algebraic system resulting for $\{r_j\}$ and $\{ x_j\}$ from equating derivatives at zero on either side of
\begin{equation}
    a_0 + a_1 x + \ldots + a_{2N} x^{2N} = \sum_{j=0}^N \frac{r_j}{x-x_j} =: P_{N:N-1}(x) \,.
\end{equation}
The unique solution then determines the off-diagonal Pad\'e approximant $P_{N:N-1}$. Whilst uniform convergence of $P_{N:N-1}$ to $f(x)$ for large $N$ is far from guaranteed \cite{lubinsky2003rogers}---even pointwise convergence fails in general---Pad\'e approximants often give an unreasonably good picture of the analytic continuation of $f(x)$. We refer to \cite{costin2020uniformization, Costin:2021bay, Costin:2020hwg} for a review of the state of the art of Pad\'e approximant methods and note here only some heuristics. Returning to the example of \cref{sec:workedexample}, for fixed $z$ we may compute the Pad\'e approximant of the germ
\begin{equation}
    y_B(w,z) = u_0(z) + w u_1(z) + w^2 u_2(z) + \ldots
\end{equation}
giving $z$-dependent $\{r_j(z)\}$ and $\{x_j(z)\}$. It is then interesting to observe the singularities cross the contour as $z \in \mathbb{C}_z$ is varied; we present some representative plots for our worked example in \cref{fig:workedexample}. We draw particular attention to the coalescing poles that Pad\'e typically assigns to branch cuts---these are associated with the branch point singularity at $\chi_2(z) = -z^2$ in the present example.

\begin{figure}[htb]
    \centering
    \includegraphics{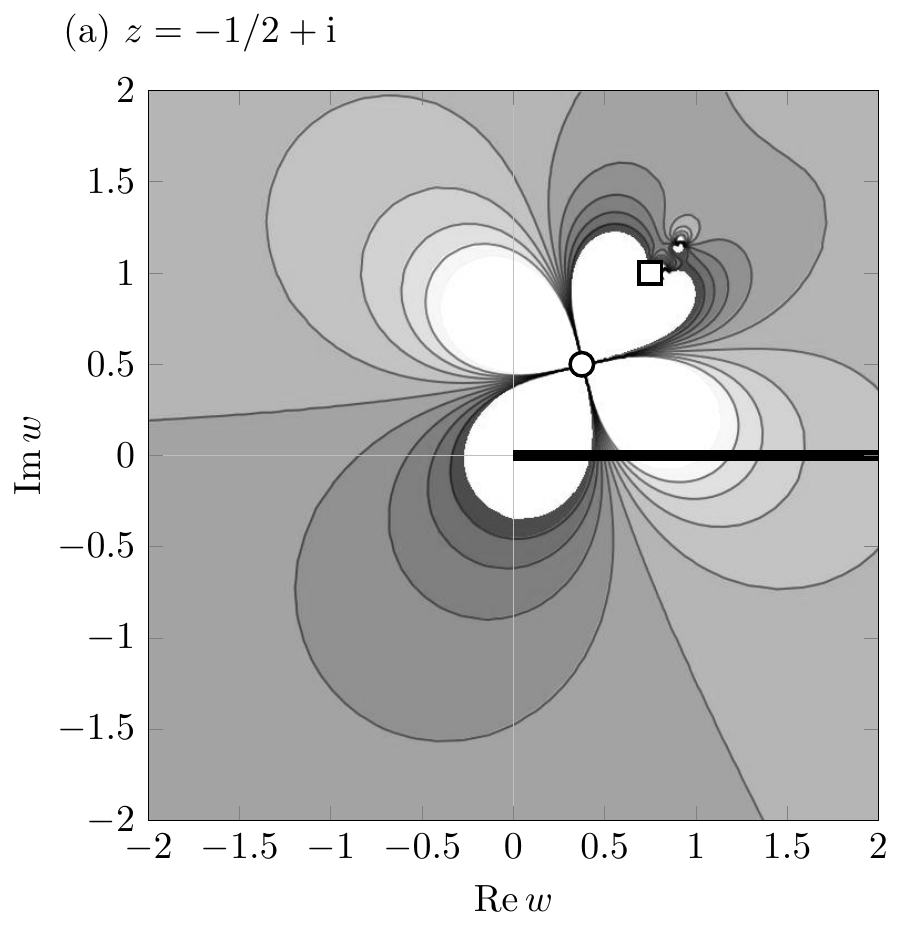}
    
    \includegraphics{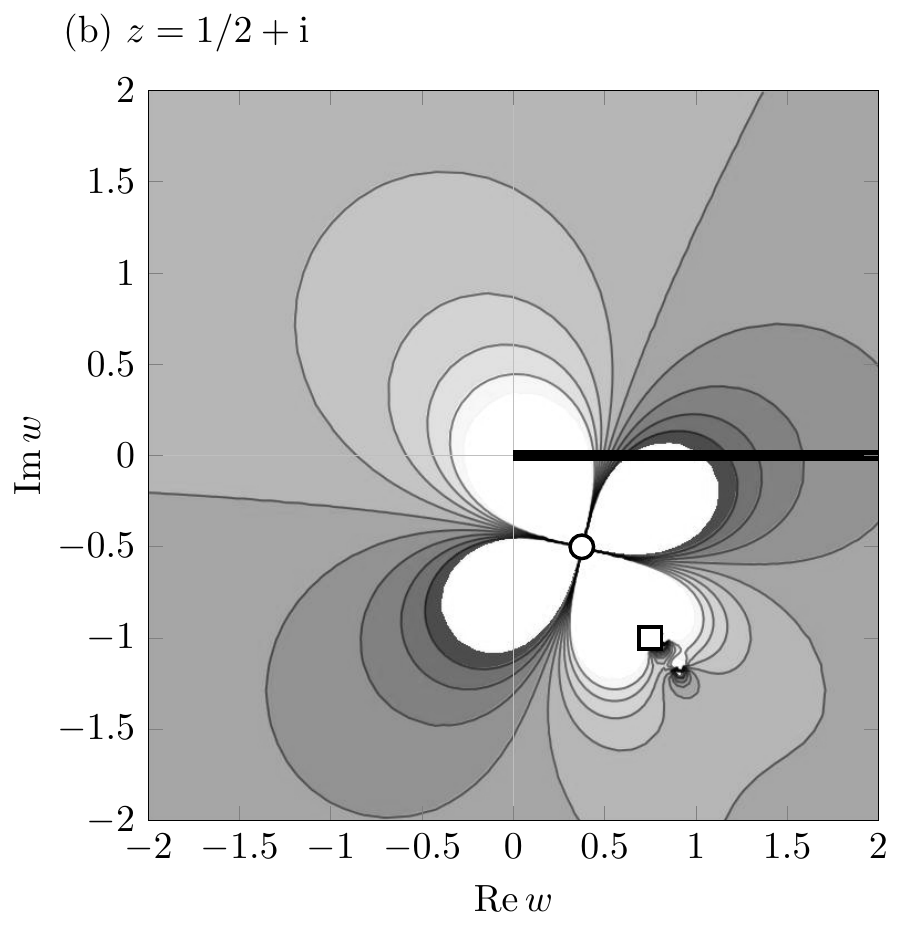}
    \caption{Pad\'e approximants for the worked example in \cref{sec:workedexample}. We compute the $[20:21]$ off-diagonal Pad\'e with $200$ perturbative coefficients in Mathematica for the two cases of (a) $z = -1/2 + \im$ and (b) $z = 1/2 + \im$. Contours correspond to $\Re y_B$. There are two singularities at $w = \chi_1(z) = -z^2/2$ (circle) and $w = \chi_2(z)= -z^2$ (square). As the Stokes line of \cref{fig:workedexample_stokes} is crossed, two exponentials switch on due to intersection of $\chi_{1,2}$ with the Borel integration contour (thick).}
    \label{fig:workedexample}
\end{figure}

\begin{remark}
We note that for any linear singularly perturbed ODE there is an algorithmic procedure to recover Pad\'e plots around any singularity in the Borel plane. One may first compute the Pad\'e approximant about $w=0$ in the way discussed above. Secondly, around a singularity $\chi$ the local coefficients $a_i^{\chi}(z)$ may be determined to an arbitrary order in $(w-\chi)$ by solving the recursive differential equations \eqref{eq:secondorderrecursive} and inner equations \eqref{eq:firstinner}. One may then compute a Pad\'e approximant around each $\chi$ to see the Borel plane expansion around each singularity. This is a useful heuristic to detect higher-order singularities on different Riemann sheets (for particular regions of $z$), since these must appear only in the expansions around $\chi$ but not $w=0$.
\end{remark}

\subsection{First-order equations and coalescence of singularities}\label{subsec:firstorder}

First-order singularly perturbed equations are exactly soluble in the Borel plane.\footnote{This is equivalent to the fact that such equations are soluble (using an integrating factor) as an integral.} Let us write a first-order differential equation as\footnote{The forcing term may be changed to $F(z)$ with no $\epsilon$ pre-factor, but then one must be careful to take the initial data for the Borel PDE as $y_1(z)$ rather than $F(z)/G(z)$. As discussed in section \ref{subsec:boreltransform} the Borel transform then `ignores' $y_0(z)$. We choose instead here to begin our expansion for $y(z,\epsilon)$ at $\mathcal{O}(\epsilon)$.}
\begin{equation} \label{eq:1storder_example}
    \mathscr{P}y = \epsilon y'(z) + G(z) y(z) = \epsilon H(z) \,,
\end{equation}
where $H(z)$ and $G(z)$ are assumed to be meromorphic for simplicity. According to lemma \ref{lemma:replacementrules}, the associated partial differential operator in the Borel plane is
\begin{equation}
    \mathscr{P}_B = \partial_z + G(z) \partial_w
\end{equation}
together with the initial data $y_B(w=0,z) = H(z)/G(z)$. The PDE may then be integrated to give
\begin{equation}\label{eq:firstordersolution}
    y_B(w,z) = \frac{H(\chi^{-1}(-w+\chi(z)))}{G(\chi^{-1}(-w+\chi(z)))} \,,
\end{equation}
where $\chi(z) = \int^z_{z_{0}} \de{s} \, G(s)$. In fact, we may observe that the solution is independent of $z_0$. 

\begin{remark}
We note the two types of singularities discussed in section \ref{sec:singularperturbation} are manifest here. Those arising from poles of $H(z)$ and those arising at points where $G'(z)=0$ which, by the inverse function theorem, coincide with points where $\chi(z)$ fails to be invertible.
\end{remark}

\paragraph{Coalescing singularities.}
There are unique complexities in cases where the forcing term, via \eqref{eq:1storder_example}, may depend on $\epsilon$, and where singularities coalesce as $\ep \to 0$. For example, let us consider 
\begin{equation} \label{eq:1st_coalesce}
    \epsilon y'(z) + y(z) = \epsilon H(z; \, \epsilon) \,.
\end{equation}
According to the replacement rules of lemma \ref{lemma:replacementrules}, in Borel space the problem reads
\begin{equation}
    \partial_z y_B(w,z) + \partial_w y_B(w,z) = H_B(w,z)
\end{equation}
where $H_B(w;z)$ is the Borel transform of $H(z; \, \ep)$. We expand $H(z; \, \epsilon)$ as a power series in $\epsilon$:
\begin{equation}
    H(z; \, \epsilon) = H_0(z) + H_1(z) \epsilon + \ldots \,.
\end{equation}
Note that typically the forcing term, $H(z;\epsilon)$, thought of as a function of $\epsilon$, will in fact have a finite radius of convergence. According to the results of section \ref{sec:background}, this means that the Borel transform $H_B(w,z)$ will be entire. The initial data is $y_B(w=0,z) = H_0(z)$, and we may integrate the Borel PDE to give the solution as an explicit integral,
\begin{equation}\label{eq:inhomogeneousintegral}
    y_B(w,z) = H_0(w-z) + \int_{\gamma_z} \de{t}  \,H_B(w-z+t,t) \,,
\end{equation}
where the integration cycle $\gamma_z$ ends at $t=z$ (note that if $H$ is a convergent series in $\ep$, then $H_B$ is non-singular, as per the above remark, and $z$ may be any point in $\mathbb{C}$).
\begin{example}
From \eqref{eq:1st_coalesce}, let us consider a forcing term with two singularities in $\mathbb{C}_z$ that coalesce to the origin as $\epsilon \to 0$:
\begin{equation}
H(z; \, \epsilon) = \frac{z}{(z+ \Delta \epsilon )(z- \Delta \epsilon)} \,,
\end{equation}
where $\Delta$ is an $O(1)$ real constant. Computing the associated Borel transform we find
\begin{equation}
    H_B(w,z) = \frac{\Delta}{z^2} \sinh{\frac{\Delta w}{z}} \,,
\end{equation}
which, as expected, is entire on $\mathbb{C}_w$. We may now compute the integral solution to the Borel PDE \eqref{eq:inhomogeneousintegral} to give
\begin{equation}
    y_B(w,z) = -\frac{\cosh \frac{\Delta w}{z}}{w-z} \,.
\end{equation}
Despite there being two singularities at finite $\epsilon$ in the forcing term there is only \textit{one} singularity in $\mathbb{C}_w$ at $w=z$, in this case a simple pole. The associated single Stokes line is then the real axis. From the inversion formula, we see that upon crossing the Stokes line, the exponentially-small contribution 
\begin{equation}
    \int_{\mathcal{H}_{\chi}} \de{w}  \, \e^{-w/\epsilon}\, \frac{\cosh \frac{\Delta w}{z}}{w-z} 
    = - 2 \pi \i (\cosh{\Delta})  \e^{-z/\epsilon} \,.
\end{equation}
is switched on. 

We note that the above Borel-plane procedure can be contrasted to \textit{e.g.}~\cite{trinh2015exponential} where applied exponential asymptotics techniques can struggle to develop expansions to such coalescing singularities problems. Here we see that when viewed via the lens of Borel transforms, the structure of the problem is manifest (namely the presence of only one Borel singularity in this case) and the derivation of the exponentially small terms is relatively straightforward. 
\end{example}

\subsection{Beyond power law singularities}\label{subsec:generalsings}
So far in this work we have considered holomorphic functions $f: \mathbb{C} \to \mathbb{C}$ with relatively tame singularities of power type [cf. \eqref{eq:elementarysingularity}]. More exotic singularities may arise as Borel-plane solutions to apparently simple singularly perturbed differential equations. Consider, for example, the equation
\begin{equation}\label{eq:funky1}
    \epsilon y' + e^z y = \epsilon z \,,
\end{equation}
so that $G(z) = e^z$, $H(z)=z$ and $\chi(z) = e^z$. Then, from the expression \eqref{eq:firstordersolution}, the solution in the Borel plane is given by
\begin{equation} \label{eq:yB_beyond}
    y_B(w,z) = \frac{\log(-w+e^z)}{-w+e^z} \,.
\end{equation}
We consider initial conditions for \eqref{eq:funky1} such that the inverse Borel transform with the perturbative contour, $\gamma_0 = (0,\infty)$, solves the equation along the real line:
\begin{equation}
    y(z;\epsilon) = \int_{(0,\infty)} \de{w}  \,e^{-w/\epsilon}\frac{\log(-w+e^z)}{-w+e^z} \,.
\end{equation}
The asymptotics in different regions of the complex plane $\mathbb{C}_z$ may then be read off using the procedure discussed in section \ref{subsec:boreltransform}.

Notice that from \eqref{eq:yB_beyond}, the Borel transform has a singularity of the form $y_B \sim \log(-s)/(-s)$ under $s = w - \chi(z)$. Hence this is not in the class of elementary singularities discussed in \eqref{eq:elementarysingularity}. We consider the coefficient integral in lemma \ref{lemma:hankelcoefficients} to leading order in $n$ and find
\begin{equation}
    \left(y_B(z)\right)_n = \frac{1}{2 \pi \i}\frac{1}{n \chi(z)^n} \int_{\mathcal{H}_0} ds\, \e^{-s} \phi\left(\frac{\chi(z)s}{n}\right)
    \sim \frac{\log(n)}{\chi(z)^{n+1}} \,.
\end{equation}
The Stokes line, $l \subset \mathbb{C}_z$, for this problem is given by 
\begin{equation}
    l: \, \Im e^z = 0, \quad \Re e^z > 0 \,,
\end{equation}
and the contribution to the trans-series across this line is given by
\begin{equation}
    \int_{\mathcal{H}_\chi} \de{w} \, \e^{-w/\epsilon} \frac{\log(-w+\e^z)}{-w+\e^z}\,.
\end{equation}
We may use the shortcut discussed below corollary \ref{cor:stokesswitching} to evaluate the exponentially small term to leading order. In the notation of that subsection, in this case we have $g(n,z) = n\log(n)/\chi$ so that the contribution across $l$ is
\begin{equation}
    \sim 2 \pi \i \log (\epsilon ) \e^{-\chi(z) / \epsilon} = 2 \pi \i \log (\epsilon ) \e^{-\e^{z}/ \epsilon} \,.
\end{equation}

\paragraph{General singularities.}
In the present framework, it is possible to produce first-order differential equations that have prescribed singularities (in the Borel plane) by reverse engineering the solution \eqref{eq:firstordersolution}. From this, we see that the standard factorial-over-power asymptotics of the form \eqref{eq:ffactpow_1} is far from universal. In particular, suppose that one wishes to construct an ODE with the Borel singularity form, $y_B \sim \phi(s)$ near $s = 0$. Then from the general solution \eqref{eq:firstordersolution}, we see that we must solve the functional relation
\begin{equation}
    F(s) = \phi\left(\int^s \de{t}\, G(t) \right)G(s) \,,
\end{equation}
for $F(s)$ and $G(s)$. Such solutions are clearly not unique and we give some example representatives in table \ref{tab:examplesings} below. It appears unlikely that, even for the restricted examples of table \ref{tab:examplesings}, such singularities form a resurgent algebra in the sense of section \ref{subsec:boreltransform}---for example it is unclear, at least to the authors, how one would close the algebra under convolution. However, in linear ODEs where there is a relatively simple singularity structure in the Borel plane this is no barrier to studying the associated Stokes phenomena and examples of singularities beyond the power law form do arise in physical examples~\cite{yyanis_thesis}.

\begin{table}[htb]
\begin{tabular}{c|c|c|c}
    \textbf{Borel singularity $\phi(s)$} & \textbf{Representative ODE} & \textbf{Late terms} & \textbf{Stokes contribution} \\
    $\log(s)$ & $\epsilon y' + e^z y = \epsilon z e^z$ & $\frac{1}{n \chi(z)^n}$ & $2 \pi i \epsilon e^{-\chi / \epsilon}$ \\ 
    
    $\log(s)^2$ & $\epsilon y' + e^z y =\epsilon z^2 e^z$ & $\frac{2 \log (n)}{n \chi^n}$ & $4 \pi i \epsilon \log (\epsilon) e^{-\chi / \epsilon}$ \\ 
    
    $\frac{\log(s)}{s}$ & $\epsilon y' + e^z y = \epsilon z $ & $\frac{\log(n)}{\chi^{n+1}}$ & $2 \pi i\log \epsilon e^{-\chi / \epsilon}$ \\ 
    
    $\text{Ei}^{-1}(s)$ & $\epsilon y' + z^{-1}e^z y = \epsilon$ & ? & ? \\ 
\end{tabular}
\caption{\label{tab:examplesings}Some example first-order singularities.} 
\end{table}

In some cases, the asymptotics of the coefficients associated to a singularity are not known. The seemingly innocuous equation in the last row of table \ref{tab:examplesings} is
\begin{equation}\label{eq:treeexample}
    \epsilon y' + \frac{e^z}{z} y = \epsilon \,.
\end{equation}
The Borel plane solution has singularities coinciding with those of inverse of the exponential integral function. To the authors' knowledge, little is known about the asymptotics of the associated coefficients and therefore it is not possible at present to determine the Stokes' switching term for this example.

Modifications to the basic factorial over power ansatz \eqref{eq:ffactpow} have arisen in physical applications---see, for example, \cite{trinh2015exponential}. The modification is explained in the present context as arising due to the presence of a singularity in the Borel plane that is not of power-law form.

\subsection{Discussion} \label{sec:discuss}
We conclude with a discussion of some directions for future research.

\paragraph{Singularly perturbed partial differential equations.}
In the present work we studied holomorphic trans-series with one additional holomorphic parameter $z \in \mathbb{C}$:
\begin{equation}
    y(z,\epsilon) = \sum_{\chi}e^{-\chi(z)/\epsilon} y^{\chi}(z,\epsilon) \,,
\end{equation}
we have discussed the correspondence with Borel functions $y_B(w,z)$ and explained how a differential operator $\mathscr{P}_B$ yields differential equations (in $z$) for the trans-series components. The interplay of singularities $\Gamma_w(z) \subset \mathbb{C}_w$ in the Borel plane and $\Gamma_z \subset \mathbb{C}_z$ in the physical plane allowed us to find initial conditions for the trans-series differential equations at points in $\Gamma_z$ via the process of inner-outer matching.

Much of this perspective neatly lifts to holomorphic trans-series with two parameters $z_1,z_2 \in \mathbb{C}_{z_1} \times \mathbb{C}_{z_2}$ of the form
\begin{equation}
    y(z_1,z_2,\epsilon) = \sum_{\chi}e^{-\chi(z_1,z_2)/\epsilon} y^{\chi}(z_1,z_2,\epsilon) \,,
\end{equation}
with associated two-parameter Borel germs $y_B(w;z_1,z_2)$. A linear singularly perturbed PDE $\mathscr{P}$ similarly yields a linear Borel PDE $\mathscr{P}_B$ from which now \textit{partial} differential equations may be derived for the trans-series components $\chi(z_1,z_2)$ and $a^{\chi}(z_1,z_2)$. We now have to define \textit{initial data} for these equations along \textit{curves} in $\mathbb{C}_{z_1} \times \mathbb{C}_{z_2}$

In the two variable case the inner-outer matching has more structure since the physical singularity set now has some geometry and defines a set of co-dimension one curves $\Gamma_z \subset \mathbb{C}_{z_1} \times \mathbb{C}_{z_2}$. The difference with the one variable case is that we now have to take our coordinate choice on $\mathbb{C}_w \times \mathbb{C}_{z_1} \times \mathbb{C}_{z_2}$ more seriously. The operator $\mathscr{P}_B$ yields a set of bicharacteristics now in $T^*(\mathbb{C}_{z_1} \times \mathbb{C}_{z_2} \times \mathbb{C}_w)$, projecting these to physical space $\mathbb{C}_{z_1} \times \mathbb{C}_{z_2}$ defines a foliation and gives curves along which singularities propogate (the singulant), we may choose a local coordinate $q$ adapted to this foliation. For an orthogonal coordinate, we write $p$ for a local coordinate on a connected component of $\Gamma_z$. The inner-outer matching is now concerned with local data near $p=0$ with a direction $q$ surviving (in contrast with the inner-outer matching being local to points in the one parameter case). In this way, we find a single parameter resurgent problem along $q$ which we must solve using, \textit{e.g.} the methods of the present work, to determine \textit{initial data} (as opposed to initial conditions at points) for the trans-series components---again along $\Gamma_z$. We leave a more thorough exploration of this perspective to future work.

\paragraph{Parametric analytic combinatorics.}
The world of analytic combinatorics provides a wealth of examples of unusual singularities and late term asymptotics (we refer the reader to \cite{flajolet2009analytic} for an excellent review and introduction to the topic). Many of the unusual singularity examples discussed above arise as counts of combinatorial objects known as \textit{trees}; whereby the perturbative germ gives a generating function and the coefficients of the Borel transform give the exponential generating function for these objects. Indeed, note that the coefficients of the Borel germ in each example of table \ref{tab:examplesings} (when simultaneously expanded about suitable points in $z$) are positive integers so that at least the possibility of counting something is present. In fact, in the context of singular perturbation theory (in the presence of an extra parameter $z$ together with the Borel variable), we find that the dual expansion in $w$ and $z$ often gives a refined (by, for example, leaves) count of trees that does not appear to have been noted in the analytic combinatorics literature.

\paragraph{Higher-order Stokes phenomena.}
In contrast to constant resurgent problems, singularly perturbed problems have the additional feature that Stokes lines naively associated to singulants, $\chi(z)$, may not always be `active'. This structure has been explored in a number of examples and perspectives \cite{howls2004higher,howls2012exponentially,chapman2007shock,aoki2005virtual}. 

The phenomena may be explained as follows. Suppose that $y_B(w,z)$ is a germ with Borel singularities, $\Gamma_w(z)$, and physical boundary layers, $\Gamma_z$, at $w=0$. We have discussed already that it is possible that the fluctuations $a^{\chi}(z)$ around a singularity, $\chi$, may yield an additional set of physical singular points $\Gamma^{\chi}_z$. In the applied exponential asymptotics terminology, these extra singularities may `drive divergence' and yield an additional Borel singularity, $w = \tilde{\chi}$, and corresponding term, $e^{-\tilde{\chi}/\epsilon}$, in the trans-series expansion. This is sometimes referred to as a `second-generation singularity'~ \cite{chapman2005exponential}. 

However, if the original singularity, $\chi$, is a branch point of $\Sigma_z$ it is possible that $\tilde{\chi}$ could not be found by the original trans-series/Borel singularity ansatz. Now, in the context of singular perturbation theory, both $\chi(z)$ and $\tilde{\chi}(z)$ may depend on $z$, and an additional kind of Stokes phenomena is possible. The naive Stokes line assigned to $\tilde{\chi}$:
\begin{equation}
    l'_{\tilde{\chi}}: \Re \tilde{\chi} > 0 \quad \Im \tilde{\chi} = 0\,,
\end{equation}
may not always be `active' since for some region(s) in $\mathbb{C}_z$, $\tilde{\chi}(z)$ may lie on a different sheet of $\Sigma_z$ to the perturbative contour $\gamma_0 = (0,\infty)$ and no `contour snagging' occurs. In this way, the true Stokes line $l_{\tilde{\chi}} \subset l'_{\tilde{\chi}}$ may be truncated, or removed entirely.

In the present work we see that extra singularities can be viewed from three dual perspectives. The discussion of section \ref{sec:background} explains that an additional divergence in the $1/n$ expansion of the form $\tilde{\chi}^{n+\alpha_{\tilde{\chi}}}$ may be associated to an additional component of the trans-series $e^{-\tilde{\chi}/\epsilon}$. Equivalently, as discussed above, the additional $\tilde{\chi}$ arises an additional singular points in the (shifted) power series expansion near $\chi$ of the Borel transform:
\begin{equation}
    y_B(t,z) = t^{-\alpha_\chi} \sum_{i \ge 0} a^{\chi}_i(z) t^i \,,
\end{equation}
where the series defined by the $a^{\chi}_i(z)$ may have a finite radius of convergence up to an additional $\tilde{\chi}$.
Higher-order Stokes phenomena can be found in relatively simple second order inhomogeneous linear equations. For example,
\begin{equation}
     \epsilon^2 y''(z) + \epsilon (1+z) y'(z) + z y(z)=1, 
\end{equation}
has the feature. \'Ecalle's alien calculus can be viewed as encoding the singularity structure of the analytic continuation of the Borel germ $y_B$. In the context of singular perturbation theory, the alien derivatives $\Delta_\chi(z)$ now depend on a holomorphic parameter $z \in \mathbb{C}_z$. Since the higher-order Stokes phenomena is intimately tied to the structure of $\Sigma_z$ it would be interesting to develop a holomorphic form of the bridge equation where we hope to give a definition of higher-order Stokes phenomena from the perspective of a (parametric) alien calculus.

\section{Acknowledgements}
The authors would like to thank the Isaac Newton Institute for Mathematical Sciences, Cambridge, for support and hospitality during the programme `Applicable resurgent asymptotics: towards a universal theory' where work on this paper was undertaken. It is also a pleasure to thank Dan Zhang, Hunter Dinkins, Nick Dorey, Shrish Parmeshwar, Juan Villarreal and Karen Yeats for many stimulating discussions. SC is particularly grateful to Akira Shudo and Gerg\H{o} Nemes for many interesting discussions and hospitality at Tokyo Metropolitan University while this work was completed. This work was supported by EPSRC grant no. EP/R014604/1. 

\appendix

\section{Gamma function}\label{sec:gammaappendix}
We recall some well-known properties of the $\Gamma(z)$ function (see \textit{e.g.} Abramowitz \& Stegun~\cite{abramowitz1964handbook} and DLMF~\cite{NIST:DLMF}).
\paragraph{Definition.}
The $\Gamma$-function is a meromorphic function which for $\Re z > 0$ is defined by an integral
\begin{equation}\label{eq:gammafunction} 
    \Gamma(z) = \int_{(0,\infty)} dx \,e^{-x}x^{z-1} \,.
\end{equation}
The analytic continuation of $\Gamma(z)$ to the whole of $\mathbb{P}^1$ has no zeros and simple poles at $\Gamma(z=-n)$ for $n$ a positive integer. The reciprocal function $1/\Gamma(z)$ thus has simple poles at the negative integers. An important property of the $\Gamma$ function is
\begin{equation}
    \Gamma(z+1) = z \Gamma(z) \,,
\end{equation}
which together with $\Gamma(1) = 1$ implies that $\Gamma(n) = (n-1)!$ for $n$ a positive integer.

\paragraph{Reciprocal Gamma function.}
Hankel contour integrals play an important role in the present work. Recall the Hankel contour representation of the reciprocal $\Gamma$ function. 

\begin{lemma}
The reciprocal Gamma function may be expressed as a contour integral as follows
\begin{equation}
    \frac{1}{\Gamma(\alpha)} = -\frac{1}{2 \pi i}\int_{\mathcal{H}_0} dt \,(-t)^{-\alpha}e^{-t} \,,
\end{equation}
where $\alpha \in \mathbb{C}\backslash \mathbb{Z}_{\le 0}$ and the Hankel contour centred on $0$, $\mathcal{H}_0$, is any contour homotopic to
\begin{equation}
    \mathcal{H}_0 = \{ t = z+i \, : \, z \ge 0 \} \cup \{ t=- e^{i \theta} \, : \, -\pi / 2 \le \theta \le \pi /2 \} \cup \{ t = z-i \, : \, z \ge 0 \}
\end{equation}
oriented anti-clockwise.
\end{lemma}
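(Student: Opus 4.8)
The plan is to collapse the Hankel contour onto the positive real $t$-axis, where the branch cut of $(-t)^{-\alpha}$ naturally lies, and to recognise the resulting real integral as a value of the $\Gamma$-function. First I would fix the branch of $(-t)^{-\alpha} = e^{-\alpha \log(-t)}$ by declaring $\log(-t)$ real when $t$ is negative real (as at the midpoint $t=-1$ of the semicircular arc), so that the cut runs along $t>0$ and $\mathcal{H}_0$ encircles it anti-clockwise. Tracking $\arg(-t)$ along the contour shows that on the upper segment $\arg(-t)\to -\pi$ while on the lower segment $\arg(-t)\to +\pi$, which fixes the phase factors $e^{i\pi\alpha}$ and $e^{-i\pi\alpha}$ picked up on the two sides of the cut.

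Next, assuming temporarily that $\Re\alpha < 1$, I would deform $\mathcal{H}_0$ so that it hugs the cut: two horizontal segments collapsing onto $t>0$ from above and below, joined by a circle of radius $\delta$ about the origin. The circular piece contributes a term of size $O(\delta^{1-\alpha})$, which vanishes as $\delta\to 0$ precisely because $\Re\alpha<1$. The two straight segments then combine, using the integral definition \eqref{eq:gammafunction} in the form $\int_0^\infty x^{-\alpha} e^{-x}\,dx = \Gamma(1-\alpha)$, to give
\begin{equation}
    \int_{\mathcal{H}_0} dt\,(-t)^{-\alpha}e^{-t} = \left(e^{-i\pi\alpha} - e^{i\pi\alpha}\right)\Gamma(1-\alpha) = -2i\sin(\pi\alpha)\,\Gamma(1-\alpha).
\end{equation}
Multiplying by $-1/(2\pi i)$ yields $\frac{\sin(\pi\alpha)}{\pi}\Gamma(1-\alpha)$, and the reflection formula $\Gamma(\alpha)\Gamma(1-\alpha) = \pi/\sin(\pi\alpha)$ identifies this with $1/\Gamma(\alpha)$, establishing the claim for $\Re\alpha<1$.

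Finally I would remove the restriction by analytic continuation. Since $\mathcal{H}_0$ stays bounded away from the origin and escapes to $t=+\infty$ where $e^{-t}$ forces rapid decay, the integral converges locally uniformly in $\alpha$ and defines an entire function of $\alpha$; as $1/\Gamma$ is also entire (with zeros at the non-positive integers), and the two agree on the open half-plane $\Re\alpha<1$, they coincide on all of $\mathbb{C}$, in particular for every $\alpha\in\mathbb{C}\setminus\mathbb{Z}_{\le 0}$. The delicate points here are purely in the bookkeeping: assigning the two boundary values of $\arg(-t)$ correctly so that the phases $e^{\pm i\pi\alpha}$ carry the right signs, and verifying that the small-circle contribution genuinely vanishes — this is the one place the hypothesis $\Re\alpha<1$ is needed, and it is the step most easily mishandled. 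As a consistency check that sidesteps the reflection formula, one may verify directly by integrating by parts on $\mathcal{H}_0$ (the boundary terms cancelling at $t=+\infty$ on both branches) that the integral $H(\alpha) := -\tfrac{1}{2\pi i}\int_{\mathcal{H}_0} dt\,(-t)^{-\alpha}e^{-t}$ obeys the recursion $H(\alpha) = \alpha\, H(\alpha+1)$, matching $1/\Gamma(\alpha) = \alpha/\Gamma(\alpha+1)$.
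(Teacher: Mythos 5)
Your proposal is correct and follows essentially the same route as the paper: collapse the contour onto the branch cut along $t>0$ (the paper phrases this as taking $\delta \mathcal{H}_0$ in the limit $\delta \to 0$), discard the vanishing contribution from the small arc, and invoke the reflection identity $\Gamma(z)\Gamma(1-z) = \pi/\sin \pi z$ to identify the result with $1/\Gamma(\alpha)$. Your write-up is in fact more complete than the paper's two-line proof, since you make explicit the temporary restriction $\Re\alpha < 1$ needed for the arc term to vanish and then remove it by analytic continuation in $\alpha$.
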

\begin{proof}
Consider the contour $\delta \mathcal{H}_0$ in the limit that $\delta \to 0$. The contribution from the middle part of the contour above vanishes and one may use the reflection identity
\begin{equation}
    \Gamma(z)\Gamma(1-z) = \frac{\pi}{\sin \pi z} \,,
\end{equation}
to obtain the result.
\end{proof}
Differentiating the above identity with respect to $\alpha$ yields the following: 
\begin{equation}\label{eq:logintegral}
    \int_{\mathcal{H}_0} dt \, e^{-t}(-t)^{-\alpha} \left(\log(-t)\right)^k = 2 \pi i(-1)^k \left.\frac{d^k}{d x^k} \frac{1}{\Gamma(x)} \right\rvert_{x=\alpha}.
\end{equation}

\paragraph{Asymptotics.}
The following identity follows from Sterling's formula \cite{tricomi1951asymptotic}:
\begin{equation}\label{eq:gammaasymptotics}
    \frac{\Gamma(n+\alpha)}{\Gamma(n+1)} = n^{\alpha-1} \left( 1+ \frac{\alpha(\alpha-1)}{2n} + O(1/n^2) \right) \,,
\end{equation}
where $\alpha \in \mathbb{C}$ and $n$ a large positive integer.


\providecommand{\href}[2]{#2}\begingroup\raggedright\endgroup

\end{document}